\begin{document}

\title[Freeness theorem for cohomology of Rep($C_2$)-complexes]{The freeness theorem for equivariant cohomology of Rep($C_2$)-complexes}
\author{Eric Hogle and Clover May}

\begin{abstract}
Let $C_2$ be the cyclic group of order two.  We show that the $RO(C_2)$-graded Bredon cohomology of a finite $\Rep(C_2)$-complex is free as a module over the cohomology of a point when using coefficients in the constant Mackey functor $\underline{\F_2}$.  This paper corrects some errors in Kronholm's proof of this freeness theorem.  It also extends the freeness result to finite type complexes, those with finitely many cells of each fixed-set dimension.  We give a counterexample showing the theorem does not hold for locally finite complexes.
\end{abstract}

\maketitle

\setcounter{tocdepth}{1}
\tableofcontents

\section{Introduction}

Let $C_2$ be the cyclic group of order two.  In this paper we are concerned with $\Rep(C_2)$-complexes, a class of $C_2$-spaces built using representations.  Our main goal is to correct some subtle errors in Kronholm's proof \cite{K} that finite $\Rep(C_2)$-complexes have free $RO(C_2)$-graded Bredon cohomology in constant $\underline{\F_2}$-coefficients.

Having proved Kronholm's theorem, we go on to extend the result to finite type complexes.  A finite type $\Rep(C_2)$-complex is one with finitely many cells of each fixed-set dimension (and hence also of each topological dimension).  The key to this proof is to filter cohomology by the fixed-set dimension of the generators and argue the vanishing of the $\varprojlim^1$ term.  The freeness theorem cannot be extended to all locally finite complexes, which have finitely many cells of each topological dimension.  A particular infinite wedge of representation spheres is a counterexample.  We show the generalized freeness theorem lifts to a splitting at the spectrum level.

Nonequiviariantly, working with coefficients in the field $\F_2$ makes singular cohomology computations rather straightforward.  For one thing, as an $\F_2$-module, every vector space is free.  Moreover, when calculating the induced map on cohomology for attaching a single cell, one only needs to consider generators from one dimension lower. Computations in $RO(C_2)$-Bredon graded cohomology of $C_2$-spaces are much more challenging, even with coefficients in the equivariant analogue $\underline{\F_2}$.  In fact, the $RO(C_2)$-graded cohomology of a point in $\underline{\F_2}$-coefficients is an infinite-dimensional non-Noetherian ring.  This ring, $\Mt$, has a complicated module theory, making freeness theorems highly nontrivial.  Furthermore, attaching maps for a cell can involve cohomology generators from lower dimensions.

Kronholm's theorem is a powerful computational tool.  It shows the cohomology of a finite $\Rep(C_2)$-complex is free as an $\Mt$-module.  Moreover, it demonstrates freeness even in the presence of nonzero differentials corresponding to the attaching maps for representation cells.  This solves numerous extension problems in computations.  Even so, finding the degrees of free generators is often nontrivial.  Kronholm's freeness theorem has been used by Dugger \cite{DGrass} to study a class of infinite $C_2$-equivariant Grassmannians that are finite type $\Rep(C_2)$-complexes.  It has also been used by the first author \cite{H} to study some families of finite Grassmannians.

Prior to Kronholm's work on the freeness theorem, Lewis \cite{L} proved a freeness theorem for the cohomology of $\Rep(C_p)$-complexes, where $p$ is any prime.  Lewis requires the complexes have only even-dimensional cells with a further restriction on the fixed-set dimensions.  These restrictions force all differentials to be zero in the long exact sequence for attaching a cell.  Ferland \cite{F}, building on the work of Lewis, generalized the freeness theorem to finite type $\Rep(C_p)$-complexes with even-dimensional cells for $p$ odd.  Ferland's result, like Kronholm's, allows for nonzero differentials.  At odd primes, it is not possible to extend Ferland's freeness theorem to include all finite type $\Rep(C_p)$-complexes (see Counterexample \ref{odd prime ex}).  This makes it all the more surprising that Kronholm's freeness theorem holds for all finite type $\Rep(C_2)$-complexes.

The gap in Kronholm's argument occurs during the inductive step, where he implies that we can reduce to the case of a differential supported by a single free summand. In fact this is not always possible.  The mistake appears to arise from the similarity between spectral sequences for two different filtrations of the space.  One spectral sequence comes from the two-stage filtration for attaching a single cell to a complex, while the other is for the `one-at-a-time' cellular filtration. Kronholm's paper appears to conflate these two approaches and incorrectly apply reasoning from one spectral sequence to the other.

There are multiple approaches one might use to correct the proof.  In this paper, we focus on the filtration for attaching a single cell and complete the inductive step.  Here the spectral sequence is really just the long exact sequence associated to a cofiber sequence, and so we will simply refer to this as a long exact sequence throughout the paper. An argument carefully extending Kronholm's techniques to a differential supported by $n$ free summands would likely work, but would require extensive bookkeeping.

To simplify the exposition, our proof uses the second author's recent $C_2$ structure theorem \cite{CM}, together with several localization arguments.  The structure theorem applies more generally to finite $C_2$-CW complexes and says their cohomology can only have two types of direct summands: free modules, and shifted copies of the cohomologies of antipodal spheres.  In the context of this result, we need only show the antipodal spheres do not appear in the cohomology of a $\Rep(C_2)$-complex.  While this may sound simple, it is still rather technical to prove the inductive step.

Somewhat surprisingly, we prove the freeness theorem for cohomology without determining a basis of free generators.  However, in practice, a free basis is useful for computations.  In Section \ref{shifts}, we explicitly compute a basis for the cohomology of a $\Rep(C_2)$-complex in the presence of a nontrivial differential.  Kronholm observed that a nontrivial differential causes generators to appear to ``shift'' from their original positions and we give formulas for these shifts.

\subsection{Organization}  In Section \ref{preliminaries} we introduce the necessary background and notation, largely from \cite{K}.  In Section \ref{comp tools} we recall a number of computational tools from \cite{K} and \cite{CM}.  The main proof will require a change of basis of a free module, the first algebraic steps of which are given in Section \ref{change of basis}.  A further restriction on the change of basis for the cohomology of a space is given in Section \ref{basis for attaching}.
These two steps are similar to Kronholm's change of basis in \cite{K}.
In Section \ref{main section} we give a proof of Kronholm's freeness theorem for finite complexes and extend the result to finite type complexes.  We also show the freeness theorem lifts to a splitting at the spectrum level.  In Section \ref{shifts} we calculate the changes in the degrees of the generators after a nontrivial differential.  We also give an explicit basis.  In Section \ref{Kronholm proof} we explain in more detail the main error in Kronholm's paper, which led to this work.

\subsection{Acknowledgments}  The authors would like to thank Dan Dugger for introducing them to the beautiful subject of equivariant topology and for his support throughout numerous revisions.  Thanks also to Mike Hill for many helpful conversations, particularly regarding the finite type and locally finite cases.  Finally, thank you to the anonymous referee for helpful suggestions.  This work was partially funded by the University of Oregon, UCLA, and Gonzaga University.

\section{Preliminaries}\label{preliminaries}

To begin, we set up some basic machinery and notation much as in \cite{K} and \cite{M} with a few small variations.  Let $G$ be a finite group.  Given an orthogonal real $G$-representation $V$, let $D(V)$ and $S(V)$ denote the unit disk and unit sphere  in $V$, respectively.  Let $S^V = \widehat{V}$ denote the representation sphere given by the one-point compactification of $V$.  There are two important types of equivariant cell complexes.

\begin{defn}
A \mdfn{$G$-CW complex} is a $G$-space $X$ with a filtration, where $X_0$ is a disjoint union of orbits $G/H$ and $X_{n}$ is obtained from $X_{n-1}$ by attaching cells of the form $(G/H_\alpha) \times D^n$ along equivariant maps $f_\alpha: G/H_\alpha \times S^{n-1} \to X_{n-1}$.  The cells are attached via the usual pushout diagram
\begin{center}
\begin{tikzcd}
\coprod_{\alpha} G/H_\alpha \times S^{n-1} \arrow{r}{\sqcup_\alpha f_\alpha} \arrow[hook, d] & X_{n-1} \arrow[d] \\
\coprod_{\alpha} G/H_\alpha \times D^n \arrow[r] & X_n
\end{tikzcd}
\end{center}
where $D^n$ and $S^{n-1}$ have the trivial $G$-action.
\end{defn}

We will mainly be interested in another type of cell structure, one that is built with representation cells, called a $\Rep(G)$-complex.
\begin{defn}\label{Rep complex def}
A \mdfn{$\Rep(G)$-complex} is a $G$-space $X$ with a filtration $X_{n}$ where $X_{0}$ is a disjoint union of trivial orbits\footnote{Note that the definition in \cite{K} allows for $X_{0}$ to be made up of any $G$-orbits.  However, $C_2$ is itself a $C_2$ orbit and does not have free cohomology, which would contradict the freeness theorem.} of the form $G/G = *$ and $X_{n}$ is obtained from $X_{n-1}$ by attaching cells of the form $D(V_\alpha)$,
where $V_\alpha$ is an $n$-dimensional real representation of $G$.  The cells are attached along maps $f_\alpha : S(V_\alpha) \to X_{n-1}$ via the usual pushout diagram.
\end{defn}

The space $X_{n}$ in either filtration is referred to as the \mdfn{$n$-skeleton} of $X$ and the filtration is referred to as a \dfn{cell structure}.
If the filtration is finite, then $X$ is \dfn{finite dimensional}.  If there are finitely many cells of each dimension, then $X$ is called \dfn{locally finite}.  We call a $\Rep(C_2)$-complex \dfn{finite type} if it has finitely many cells of each fixed-set dimension, as defined below.  If $X$ is a connected $\Rep(G)$-complex, the filtration quotients are wedges of representation spheres $X_{n}/X_{n-1} \cong \bigvee_\alpha S^{V_\alpha}$.

\begin{remark}
Any $\Rep(G)$-complex can be given the structure of a $G$-CW complex.  The converse is false.  In particular, any $G$-space with a free action cannot be given the structure of a $\Rep(G)$-complex.  A $\Rep(G)$-complex has at least one fixed point because the origin of any real representation is fixed.
\end{remark}

We now specialize to the group $G=C_2$.  As in \cite{K}, we write a $p$-dimensional real $C_2$-representation $V$ as
\[
V \cong (\R^{1,0})^{p-q} \oplus (\R^{1,1})^{q} = \R^{p,q}
\]
where $\R^{1,0}$ is the trivial $1$-dimensional real representation of $C_2$ and $\R^{1,1}$ is the sign representation.  Allowing $p$ and $q$ to be integers if $V$ is a virtual representation, we call $p$ the \dfn{topological dimension} and $q$ the \dfn{weight} or \dfn{twisted dimension} of $V = \R^{p,q}$.  We will also refer to the \dfn{fixed-set dimension}, which is $p-q$. This is also referred to in the literature as \dfn{coweight}.  We write $S^V = S^{p,q}$ for the (possibly virtual) \dfn{representation sphere} given by the one-point compactification of $V$.

For the $V$-th graded component of the ordinary $RO(C_2)$-graded Bredon equivariant cohomology of a $C_2$-space $X$ with coefficients in the constant Mackey functor $\underline{\F_2}$, we write $H^V_G(X;\underline{\F_2}) = H^{p,q}(X;\underline{\F_2})$.  We often suppress the coefficients and simply write $H^{p,q}(X;\underline{\F_2}) = H^{p,q}(X)$.  When we work nonequivariantly, $H^*_{\sing}(X)$ denotes the singular cohomology of the underlying topological space $X$ with $\F_2$-coefficients.
The genuine equivariant Eilenberg--MacLane spectrum representing $\tilde{H}^{*,*}(-)$ is $H\underline{\F_2}$.  It has as its underlying spectrum $H\F_2$.
Given a homogeneous element $x \in H^{p,q}(X)$, we use the notation $|x| = (p,q)$ for the bidegree, $\Top(x) = p$ for the topological dimension, $\wt(x) = q$ for the weight, and $\fix(x) = p - q$ for the fixed-set dimension.\footnote{This is a departure from the usual notation. Kronholm \cite{K}, Shulman \cite{S}, and Ferland and Lewis \cite{FL} use the notation $|x|$ to denote the topological dimension $p$ rather than the bidegree, and the notation $|x^G|$ to denote the fixed-set dimension $p-q$.}  It is often convenient to plot the bigraded cohomology in the plane.  We will always plot the topological dimension $p$ horizontally and the weight $q$ vertically.

With coefficients in the constant Mackey functor $\underline{\F_2}$, the cohomology of a point with the trivial $C_2$-action is the ring $\Mt := H^{*,*}(\pt)$ pictured in Figure \ref{M2}.  On the left is a more detailed depiction, however in practice it is easier to work with the more succinct version on the right.  Every lattice point inside the two ``cones" represents a copy of $\F_2$.  There are unique nonzero elements $\rho \in H^{1,1}(\pt)$ and $\tau \in H^{0,1}(\pt)$.
Considered as an $\F_2[\rho,\tau]$-module, $\Mt$ splits as $\Mt = \Mt^+ \oplus \Mt^-$ where the top cone $\Mt^+$ is a polynomial algebra with generators $\rho$ and $\tau$.  There is a unique nonzero element in bidegree $(0,-2)$ of the bottom cone $\Mt^-$. This element $\theta \in H^{0,-2}(\pt)$ is infinitely divisible by both $\rho$ and $\tau$ and satisfies $\theta^2 = 0$.
We say that every element of the lower cone is both $\rho$-torsion and $\tau$-torsion because it is killed by a multiple of $\rho$ and some multiple of $\tau$.

\begin{figure}[ht]
\begin{center}\hfill
\begin{tikzpicture}[scale=0.6]
\draw[gray] (-3.5,0) -- (4.5,0) node[below,black] {\small $p$};
\draw[gray] (0,-4.5) -- (0,4.5) node[left,black] {\small $q$};
\foreach \x in {-3,...,-1,1,2,...,4}
	\draw [font=\tiny, gray] (\x cm,2pt) -- (\x cm,-2pt) node[anchor=north] {$\x$};
\foreach \y in {-4,...,-1,1,2,...,4}
	\draw [font=\tiny, gray] (2pt,\y cm) -- (-2pt,\y cm) node[anchor=east] {$\y$};

\foreach \y in {0,...,4}
	\fill (0,\y) circle(2.5pt);
\foreach \y in {1,...,4}
	\fill (1,\y) circle(2.5pt);
\foreach \y in {2,...,4}
	\fill (2,\y) circle(2.5pt);
\foreach \y in {3,...,4}
	\fill (3,\y) circle(2.5pt);
\foreach \y in {4,...,4}
	\fill (4,\y) circle(2.5pt);

\foreach \y in {0,...,2}
	\fill (0,-\y-2) circle(2.5pt);
\foreach \y in {1,...,2}
	\fill (-1,-\y-2) circle(2.5pt);
\foreach \y in {2,...,2}
	\fill (-2,-\y-2) circle(2.5pt);

\draw[thick,->] (0,0) -- (4.5,4.5);
\draw[thick,->] (0,0) -- (0,4.5);
\draw[thick,->] (0,-2) -- (0,-4.5);
\draw[thick,->] (0,-2) -- (-2.5,-4.5);

\draw (0,-0.3) node[below,right] {$1$};
\draw (1,1) node[right] {$\rho$};
\draw (0,1) node[right] {$\tau$};
\draw (0,-2) node[right] {$\theta$};
\draw (-1.1,-3) node[left] {$\frac{\theta}{\rho}$};
\draw (0,-3) node[right] {$\frac{\theta}{\tau}$};

\end{tikzpicture} \hfill
\begin{tikzpicture}[scale=0.6]
\draw[gray] (-3.5,0) -- (4.5,0) node[below,black] {\small $p$};
\draw[gray] (0,-4.5) -- (0,4.5) node[left,black] {\small $q$};
\foreach \x in {-3,...,-1,1,2,...,4}
	\draw [font=\tiny, gray] (\x cm,2pt) -- (\x cm,-2pt) node[anchor=north] {$\x$};
\foreach \y in {-4,...,-1,1,2,...,4}
	\draw [font=\tiny, gray] (2pt,\y cm) -- (-2pt,\y cm) node[anchor=east] {$\y$};
\draw[gray] (2pt, -1cm) -- (-2pt,-1cm);

\draw[thick] (0,0) -- (4.5,4.5);
\draw[thick] (0,0) -- (0,4.5);
\draw[thick] (0,-2) -- (0,-4.5);
\draw[thick] (0,-2) -- (-2.5,-4.5);

\fill (0,0) circle(2.5pt);
\draw[transparent] (5.5,0) node{$M2$};

\end{tikzpicture}
\end{center}
\caption{$\Mt = H^{*,*}(\pt;\underline{\F_2})$.}
\label{M2}
\end{figure}

The $RO(C_2)$-graded cohomology $H^{*,*}(X)$ is a bigraded $\Mt$-module.  By $\Mt$-module we always mean bigraded $\Mt$-module, and any reference to an $\Mt$-module map means a bigraded homomorphism. For a free $\Mt$-module with a single generator $\omega$ with bidegree $|\omega| = (p,q)$ we use the notation $\Mtb{\omega} = \Sigma^{p,q}\Mt$.

We write $\A_n$ for the cohomology of $S^n_a$, the $n$-dimensional sphere with the antipodal action, as an $\Mt$-module.  Notice that $S^n_a$ has a free $C_2$-action, so this is not an example of $\Rep(C_2)$-complex.  The $\Mt$-module $\A_n$ plays an important role in the cohomology of $C_2$-CW complexes, though we will see that it cannot appear in the cohomology of a $\Rep(C_2)$-complex.

A picture of $\A_n$ (actually of $\A_4$) appears in Figure \ref{An}.  Once again, on the left is a more detailed depiction, while in practice it is more convenient to use the succinct version on the right.  Here every lattice point in the infinite strip of width $n+1$ represents an $\F_2$.  Diagonal lines represent multiplication by $\rho$ and vertical lines represent multiplication by $\tau$, so that every nonzero element in $\A_n$ is in the image of $\tau$ and is not $\tau$-torsion.
We allow for $n = 0$ since $C_2 = S^0_a$ has cohomology given by a single vertical line.  As a ring $\A_n \cong \F_2[\tau,\tau^{-1},\rho]/(\rho^{n+1})$, where multiplication by $\rho$ and $\tau$ corresponds to the module multiplication by the usual elements in $\Mt$ and where $\tau^{-1}$ has bidegree $(0,-1)$.

\begin{figure}[ht]
\begin{center}\hfill
\begin{tikzpicture}[scale=0.6]
\draw[gray] (-1,0) -- (5.5,0) node[below,black] {\small $p$};
\draw[gray] (0,-4.3) -- (0,4.3) node[left,black] {\small $q$};
\draw [font=\small, gray] (-0.3,0) node[below] {$0$};
\draw [font=\small, gray] (4.3,0) node[below] {$n$};

\draw (2,5) node {$\vdots$};
\foreach \x in {0,...,4}
	\foreach \y in {-4,...,4}
		\fill (\x,\y) circle(2.5pt);

\foreach \x in {0,...,4}
	\draw[thick] (\x,-4.3) -- (\x,4.3);
\foreach \y in {-4,...,0}
	\draw[thick] (0,\y) -- (4,\y+4);

\draw[thick] (0,1) -- (3.3,4.3);
\draw[thick] (0,2) -- (2.3,4.3);
\draw[thick] (0,3) -- (1.3,4.3);
\draw[thick] (0,4) -- (0.3,4.3);

\draw[thick] (0.7,-4.3) -- (4,-1);
\draw[thick] (1.7,-4.3) -- (4,-2);
\draw[thick] (2.7,-4.3) -- (4,-3);
\draw[thick] (3.7,-4.3) -- (4,-4);
\draw[thick] (2,-4.5) node {$\vdots$};

\end{tikzpicture}\hfill
\begin{tikzpicture}[scale=0.6]
\draw[gray] (-1,0) -- (5.5,0) node[below,black] {\small $p$};
\draw[gray] (0,-4.3) -- (0,4.3) node[left,black] {\small $q$};
\draw [font=\small, gray] (-0.3,0) node[below] {$0$};
\draw [font=\small, gray] (4.3,0) node[below] {$n$};

\draw[transparent] (2,5) node {$\vdots$};

\foreach \x in {0,4}
	\draw[thick] (\x,-4.3) -- (\x,4.3);
\foreach \y in {-4,...,0}
	\draw[thick] (0,\y) -- (4,\y+4);

\draw[thick] (0,1) -- (3.3,4.3);
\draw[thick] (0,2) -- (2.3,4.3);
\draw[thick] (0,3) -- (1.3,4.3);
\draw[thick] (0,4) -- (0.3,4.3);

\draw[thick] (0.7,-4.3) -- (4,-1);
\draw[thick] (1.7,-4.3) -- (4,-2);
\draw[thick] (2.7,-4.3) -- (4,-3);
\draw[thick] (3.7,-4.3) -- (4,-4);
\draw[transparent] (2,-4.5) node {$\vdots$};

\draw[transparent] (7,0) node {An};
\end{tikzpicture}
\end{center}
\caption{$\A_n = H^{*,*}(S^n_a;\underline{\F_2})$.}
\label{An}
\end{figure}

\section{Computational tools}\label{comp tools}

In this section we present some tools for computing the $RO(C_2)$-graded cohomology of $C_2$-equivariant spaces.  Let $X$ be a connected $\Rep(C_2)$-complex.  Then $X$ has a filtration coming from the cell structure where the filtration quotients $X_{n}/X_{n-1}$ are wedges of $n$-dimensional representation spheres corresponding to the representation cells that were attached.

More generally, given any filtration of a $C_2$-space $X$
\[
\pt = X_0 \subseteq X_1 \subseteq \cdots \subseteq X_k \subseteq X_{k+1} \subseteq \cdots \Rightarrow X
\]
corresponding to each cofiber sequence
\[
X_{k} \hookrightarrow X_{k+1} \to X_{k+1}/X_{k}
\]
and for each weight $q$ there is a long exact sequence\footnote{As Kronholm \cite{K} points out, these long exact sequences sew together in the usual way to give a spectral sequence.  See Proposition \ref{spec seq} in Section \ref{Kronholm proof}.}
\[
\cdots \to \tilde{H}^{p,q}(X_{k+1}/X_{k}) \to \tilde{H}^{p,q}(X_{k+1}) \to \tilde{H}^{p,q}(X_{k}) \xrightarrow{d} \tilde{H}^{p+1,q}(X_{k+1}/X_{k})\to \cdots.
\]
We often refer to the long exact sequences taken collectively for all $q$ as ``the long exact sequence."  Then $d$, taken collectively for all $p$ and $q$, is a graded $\Mt$-module map $d: \tilde{H}^{*,*}(X_{k}) \to \tilde{H}^{*+1,*}(X_{k+1}/X_{k})$, which we call the ``differential'' in the long exact sequence.  This gives a short exact sequence of graded $\Mt$-modules
\[
0 \to \cok (d) \to \tilde{H}^{*,*}(X_{k+1}) \to \ker (d) \to 0.
\]
In many cases $\cok(d)$ and $\ker(d)$ are relatively easily determined, but computing $\tilde{H}^{*,*}(X_{k+1})$ requires solving the extension problem presented in this short exact sequence.

As in the previous section, we plot $RO(C_2)$-graded cohomology in the plane with the topological dimension $p$ along the horizontal axis and the weight $q$ along the vertical axis.  The differential $d$ in the long exact sequence is depicted by a horizontal arrow since it increases topological dimension by one.  When $\tilde{H}^{*,*}(X_{k})$ is free as a graded $\Mt$-module, i.e.\! when
\[
\tilde{H}^{*,*}(X_{k}) \cong \Mtb{\gamma_1, \dots, \gamma_k} = \bigoplus_{i} \Sigma^{|\gamma_i|} \Mt,
\]
the differential is determined by its image $d(\gamma_i)$ on the basis elements or on any set of generators.

Before moving on we present an example that illustrates some common computational techniques as well as some advantages of the main theorem.  In the computation presented here we use the following fact from Section 6 in \cite{CM}, which says we can compute the $p$-axis of the $RO(C_2)$-graded cohomology of a space using singular cohomology of the quotient.
\begin{lemma} \label{quotient lemma}
Let $X$ be a $C_2$-space.  Then $\tilde{H}^{p,0}(X) \cong H^p_{\sing}(X/C_2)$.
\end{lemma}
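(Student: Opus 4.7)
The plan is to reduce to the case of a $C_2$-CW complex and then identify the cellular cochain complex computing $\tilde{H}^{*,0}(X;\underline{\F_2})$ with the ordinary cellular cochain complex of $X/C_2$ in $\F_2$-coefficients. Both sides of the claimed isomorphism are $C_2$-weak homotopy invariants: the left by definition of Bredon cohomology, and the right because a $C_2$-weak equivalence $X' \to X$ induces a weak equivalence on the underlying spaces and on the $C_2$-fixed point sets, and therefore on the quotients. So after equivariant cellular approximation I may assume $X$ is a $C_2$-CW complex.

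For such an $X$, the integer-graded Bredon cochain complex with coefficients in a Mackey functor $\underline{M}$ is obtained by applying $\Hom_{\mathrm{Mac}}(-,\underline{M})$ to the cellular chain Mackey functor of $X$. Each equivariant $n$-cell of type $C_2/H_\alpha \times D^n$ contributes a free summand to the cellular chains, so by Yoneda its contribution to the $n$th cochain group is $\underline{M}(C_2/H_\alpha)$. Specialising to the constant Mackey functor $\underline{M} = \underline{\F_2}$, each of these summands collapses to a single copy of $\F_2$, regardless of the isotropy $H_\alpha$. Thus the $n$th cellular Bredon cochain group has one copy of $\F_2$ for each equivariant $n$-cell of $X$.

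Next I would observe that $X/C_2$ inherits a CW structure from the equivariant one: each equivariant cell $C_2/H_\alpha \times D^n$ descends to a single non-equivariant $D^n$, since $(C_2/H_\alpha)/C_2 = *$ whether $H_\alpha = e$ or $H_\alpha = C_2$. Hence the cells of $X/C_2$ biject canonically with the equivariant cells of $X$, and the ordinary $\F_2$-valued cellular cochain groups of $X/C_2$ match the cellular Bredon cochain groups term by term.

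The main obstacle is the final identification of the two differentials. The Bredon coboundary is assembled from the equivariant attaching maps, weighted by the restriction and transfer structure of $\underline{\F_2}$. The crucial simplification is that the transfer $\underline{\F_2}(C_2/e) \to \underline{\F_2}(C_2/C_2)$ is multiplication by $|C_2| = 2$, which vanishes in $\F_2$; meanwhile the restriction is the identity on $\F_2$. This collapses the Bredon coboundary formula to the mod-$2$ incidence count of how an $(n+1)$-cell attaches to each $n$-cell after passing to the quotient, which is precisely the cellular coboundary computing $H^{*}_{\sing}(X/C_2;\F_2)$. Carefully unpacking the Mackey functor structure of the cellular differential to confirm this collapse is where the technical work lies.
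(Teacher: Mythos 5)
The paper does not actually prove this lemma: it is imported from Section 6 of \cite{CM}, and the proof there rests on the same classical fact you are reproving, namely that integer-graded Bredon cohomology with coefficients in a constant system is the cellular cohomology of the orbit space. Your cellular identification is that standard argument, and its core is correct: each cell $C_2/H_\alpha\times D^n$ contributes $\underline{\F_2}(C_2/H_\alpha)\cong\F_2$ to the cochains by Yoneda, the cells of $X/C_2$ biject with the equivariant cells, and the differentials agree. Two remarks. First, the transfer issue is even simpler than you suggest: in the cellular chain complex of a $C_2$-CW complex a fixed cell attaches inside the fixed subspace, so its component on any free cell is already null before dualizing; the substantive checks are that the restriction of $\underline{\F_2}$ is the identity and that the Weyl action on $\underline{\F_2}(C_2/e)$ is trivial, which is what turns an equivariant incidence element $a+bg$ into the mod~$2$ incidence number $a+b$ of the quotient cells (your ``transfer is multiplication by $2$'' argument disposes of the same terms, so this is harmless).

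Second, the reduction to $C_2$-CW complexes is the one step that is not justified as written. A $C_2$-weak equivalence does induce weak equivalences on underlying spaces and fixed sets, but it does \emph{not} follow formally that it induces a weak equivalence on orbit spaces: passage to $X/C_2$ is a left adjoint and only preserves weak equivalences between $C_2$-CW-type objects, so for a CW approximation $\Gamma X\to X$ with $X$ arbitrary the map $\Gamma X/C_2\to X/C_2$ need not be a weak equivalence, and $H^*_{\sing}(X/C_2)$ is not a $C_2$-weak-homotopy invariant of an arbitrary $C_2$-space. The fix is cheap: either state the lemma for spaces having the $C_2$-homotopy type of a $C_2$-CW complex (every space to which it is applied in this paper is a $\Rep(C_2)$-complex, hence $C_2$-CW, and there the quotient of a $C_2$-homotopy equivalence is a homotopy equivalence), or interpret both sides after a fixed CW approximation. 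With that caveat your outline matches the argument behind the citation.
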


\begin{example}\label{RP^2}
In this example we compute the cohomology of the projective space $\R P^2_{tw} = \mathbb{P}(\R^{3,1})$ using Lemma \ref{quotient lemma}.  A picture of $\R P^2_{tw}$ is shown in Figure \ref{real proj fig}.  This is the usual depiction of a disk with opposite points on the boundary identified.  The $C_2$-action is given by rotating the picture $180^\circ$.

\begin{figure}[ht]
\begin{center}
\begin{tikzpicture}[scale=1]
\draw[thick, fixedgreen,-] (0,0) arc (-90:90:1cm);
\draw[thick,->] (1,1.1) -- (1,1.12);
\draw[thick, fixedgreen,-] (0,2) arc (90:270:1cm);
\draw[thick,->] (-1,0.98) -- (-1,0.96);
\draw[->] (-0.175,1) arc (-180:90:5pt);
\fill[fixedgreen] (0,0.01) circle(1.75pt);
\fill[fixedgreen] (0,2) circle(1.75pt);
\fill[fixedgreen] (0,1) circle(1.75pt);

\draw[transparent] (1,0) node {stuff};
\end{tikzpicture}
\end{center}
\caption{A depiction of $\R P^2_{tw}$.}
\label{real proj fig}
\end{figure}

The long exact sequence associated to the cofiber sequence $S^{1,0} \hookrightarrow \R P^2_{tw} \to S^{2,2}$ is depicted on the left side of Figure \ref{differential real proj}.  Recall that in these depictions every lattice point inside the cones represents an $\F_2$.

The map $d$ is determined by its image on the generator of $\tilde{H}^{*,*}(S^{1,0}) \cong  \Sigma^{1,0}\Mt$.  It is necessarily nonzero because the quotient space $\R P^2_{tw}/C_2$ is the cone on $S^1$, which is contractible.  The modules ${\color{blue}\cok(d)}$ and ${\color{red}\ker(d)}$ resulting from this differential are on the right side of Figure \ref{differential real proj}.   Even knowing the differential, computing $\tilde{H}^{*,*}(\R P^2_{tw})$ requires solving the extension problem in the short exact sequence
\[
0 \to \color{blue}{\cok(d)}\, {\color{black}\to \tilde{H}^{*,*}(\R P^2_{tw}) \to} {\,\color{red}\ker(d)}\, \color{black}{\to 0.}
\]
It is not at all obvious at this stage that the solution to this extension problem should be a free $\Mt$-module.  However, since $\R P^2_{tw}$ is a $\Rep(C_2)$-complex $\tilde{H}^{*,*}(\R P^2_{tw})$ must be free as a result of the freeness theorem in Section \ref{main thm}.  Thus the cohomology of $\R P^2_{tw}$ as an $\Mt$-module is $\tilde{H}^{*,*}(\R P^2_{tw}) = \Sigma^{1,1}\Mt \oplus \Sigma^{2,1}\Mt$, as pictured in Figure \ref{cohomology}.

\begin{figure}[ht]
\begin{center}\hfill
\begin{tikzpicture}[scale=0.6]
\draw[gray] (-3.5,0) -- (4.5,0) node[below,black] {\small $p$};
\draw[gray] (0,-4.5) -- (0,4.5) node[left,black] {\small $q$};
\foreach \x in {-3,...,-1,1,2,...,4}
	\draw [font=\tiny, gray] (\x cm,2pt) -- (\x cm,-2pt) node[anchor=north] {$\x$};
\foreach \y in {-4,...,-1,1,2,...,4}
	\draw [font=\tiny, gray] (2pt,\y cm) -- (-2pt,\y cm) node[anchor=east] {$\y$};

\draw[->,thick,black] (1,0) -- (1.95,0);
\draw (1.6,0) node[above] {\small $d$};

\draw[thick,red] (1,0) -- (4.5,3.5);
\draw[thick,red] (1,0) -- (1,4.5);
\draw[thick,red] (1,-2) -- (1,-4.5);
\draw[thick,red] (1,-2) -- (-1.5,-4.5);
\fill[red] (1,0) circle(2.5pt);

\draw[thick,blue] (2,2) -- (4.5,4.5);
\draw[thick,blue] (2,2) -- (2,4.5);
\draw[thick,blue] (2,0) -- (2,-4.5);
\draw[thick,blue] (2,0) -- (-2.5,-4.5);
\fill[blue] (2,2) circle(2.5pt);

\end{tikzpicture}\hfill
\begin{tikzpicture}[scale=0.6]
\draw[gray] (-3.5,0) -- (4.5,0) node[below,black] {\small $p$};
\draw[gray] (0,-4.5) -- (0,4.5) node[left,black] {\small $q$};
\foreach \x in {-3,...,-1,1,2,...,4}
	\draw [font=\tiny, gray] (\x cm,2pt) -- (\x cm,-2pt) node[anchor=north] {$\x$};
\foreach \y in {-4,...,-1,1,2,...,4}
	\draw [font=\tiny, gray] (2pt,\y cm) -- (-2pt,\y cm) node[anchor=east] {$\y$};

\draw[thick,red] (1,1) -- (2,2) -- (2,1) -- (4.5,3.5);
\draw[thick,red] (1,1) -- (1,4.5);
\draw[thick,red] (1,-2) -- (1,-4.5);
\draw[thick,red] (1,-2) -- (-1.5,-4.5);
\draw[red] (3.5,1.5) node {\small $\ker(d)$};

\draw[thick,blue] (2,2.1) -- (4.5,4.6);
\draw[thick,blue] (2,2.1) -- (2,4.5);
\draw[thick,blue] (2,-0.9) -- (2,-4.5);
\draw[thick,blue] (2,-0.9) -- (1,-1.9) -- (1,-0.9) -- (-2.6,-4.5);
\draw[blue] (3,-2) node {\small $\cok(d)$};

\draw[transparent] (5.5,0) node {RP2};
\end{tikzpicture}
\end{center}
\caption{Differential in a long exact sequence for $\tilde{H}^{*,*}(\R P^2_{tw}$).}\label{differential real proj}
\end{figure}
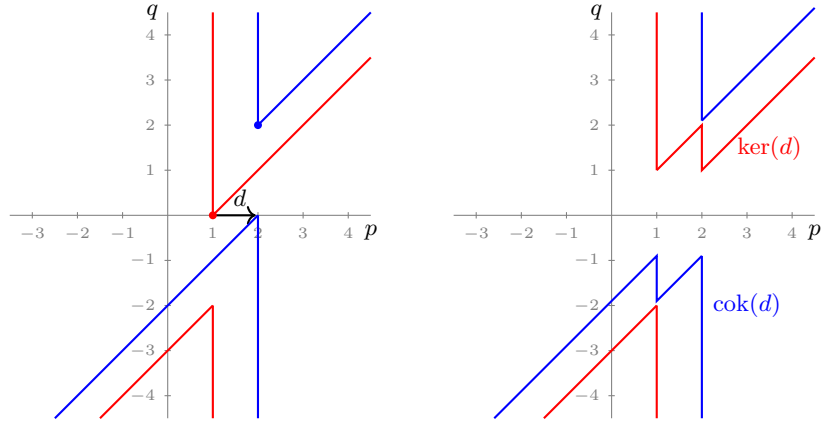

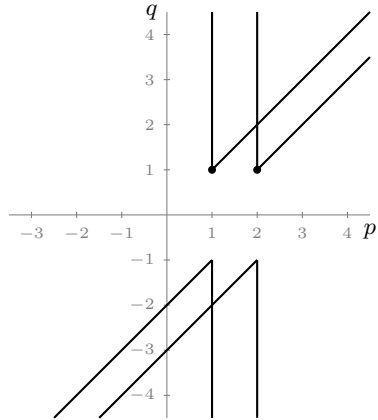
\begin{figure}[ht]
\begin{center}
\begin{tikzpicture}[scale=0.6]
\draw[gray] (-3.5,0) -- (4.5,0) node[below,black] {\small $p$};
\draw[gray] (0,-4.5) -- (0,4.5) node[left,black] {\small $q$};
\foreach \x in {-3,...,-1,1,2,...,4}
	\draw [font=\tiny, gray] (\x cm,2pt) -- (\x cm,-2pt) node[anchor=north] {$\x$};
\foreach \y in {-4,...,-1,1,2,...,4}
	\draw [font=\tiny, gray] (2pt,\y cm) -- (-2pt,\y cm) node[anchor=east] {$\y$};

\draw[thick] (1,1) -- (4.5,4.5);
\draw[thick] (1,1) -- (1,4.5);
\draw[thick] (1,-1) -- (1,-4.5);
\draw[thick] (1,-1) -- (-2.5,-4.5);
\fill (1,1) circle(2.5pt);

\draw[thick] (2,1) -- (4.5,3.5);
\draw[thick] (2,1) -- (2,4.5);
\draw[thick] (2,-1) -- (2,-4.5);
\draw[thick] (2,-1) -- (-1.5,-4.5);
\fill (2,1) circle(2.5pt);

\draw[transparent] (5.5,0) node {RP2};
\end{tikzpicture}
\end{center}
\caption{Reduced cohomology of $\R P^2_{tw}$.}
\label{cohomology}
\end{figure}

Notice that in the cohomology of $\R P^2_{tw}$ there are two copies of $\Mt$ generated in the same topological dimensions as before, but they have ``shifted."  One generator now has higher weight and the other lower weight.  This is an example of a more general behavior known as a ``Kronholm shift'', described in Section \ref{shifts}.  We give formulas that precisely quantify these shifts in Theorem \ref{formula}.  The power of the freeness theorem and the shifting formulas is that when a nonzero differential like the one above occurs, the resulting cohomology must be free and the bidegrees of the free generators are determined.
\end{example}

\begin{aside}  One might expect a similar freeness result to hold more generally for $\Rep(C_p)$-complexes.  However, for odd primes one quickly discovers a space analogous to $\R P^2_{tw}$ that does not have free cohomology.  This makes it rather surprising that the freeness theorem holds for all finite $\Rep(C_2)$-complexes.  For concreteness, we take $p=3$ and give an example below of a $\Rep(C_3)$-complex whose cohomology is \emph{not} free.

\begin{counterexample}\label{odd prime ex}
Let $X$ be the $C_3$-space whose underlying space is a $2$-simplex with edges identified as pictured in Figure \ref{odd prime fig}.  Here a generator of $C_3$ acts by rotating the picture $120^\circ$.

\begin{figure}[ht]
\begin{center}
\begin{tikzpicture}[scale=1]
\draw[thick,fixedgreen,-] (-1.5,0.3) -- (1.5,0.3) -- (0,2.6) -- cycle;
\draw[thick,->] (-0.83,1.325) -- (-0.84,1.31);
\draw[thick,->] (0.765,1.43) -- (0.755,1.45);
\draw[thick,->] (0.03,0.3) -- (0.05,0.3);
\draw[->] (-0.175,1.15) arc (-180:90:5pt);
\fill[fixedgreen] (-1.5,0.3) circle(1.5pt);
\fill[fixedgreen] (0,2.6) circle(1.5pt);
\fill[fixedgreen] (0,1.15) circle(1.25pt);
\fill[fixedgreen] (1.5,0.3) circle(1.5pt);

\draw[transparent] (1.25,0) node {stuff};
\end{tikzpicture}
\end{center}
\caption{A $C_3$ analogue of $\R P^2_{tw}$.}
\label{odd prime fig}
\end{figure}
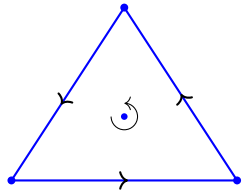

The space $X$ can be realized as a $\Rep(C_3)$-complex.  Using techniques similar to those in Example \ref{RP^2}, one can compute the cohomology of $X$ with constant $\underline{\F_3}$-coefficients as a module over the cohomology of a point.
Of course, the cohomology of a point as a $C_3$-space with $\underline{\F_3}$ coefficients is not $\Mt$, but it shares several properties with $\Mt$ and can also be depicted with two cones.
One can readily verify the cohomology of $X$ is not free as a module over this ring.
\end{counterexample}
\end{aside}

We now return to the prime two and assemble a few more computational tools.  We will use several results from \cite{CM} to simplify the proof of the freeness theorem.  In particular, from \cite{CM} we have the following structure theorem for the $RO(C_2)$-graded cohomology of $C_2$-CW complexes.  The structure theorem says that as a module over the cohomology of the point, the $RO(C_2)$-graded cohomology of a finite $C_2$-CW complex decomposes as a direct sum of two basic pieces: shifted copies of the cohomology of a point and shifted copies of the cohomologies of spheres with the antipodal action.

\begin{thm}[Structure Theorem] \label{structure thm}
For any finite $C_2$-CW complex $X$, there is a decomposition of the $RO(C_2)$-graded cohomology of $X$ as an $\Mt$-module given by
\[
H^{*,*}(X;\underline{\F_2}) \cong \left(\bigoplus_i \Sigma^{p_i,q_i} \Mt\right) \oplus \left(\bigoplus_j \Sigma^{r_j,0} \A_{n_j}\right),
\]
where $0 \leq q_i \leq p_i$, $0 \le r_j$, and $0 \le n_j$.
\end{thm}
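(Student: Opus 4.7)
The plan is to proceed by induction on the number of equivariant cells in a chosen $C_2$-CW structure for $X$. For the base case, a single $0$-cell is either a fixed point $C_2/C_2 = \pt$, contributing $\Mt$ as a summand of the first type, or a free orbit $C_2/\{e\} \cong S^0_a$, contributing $\A_0$ as a summand of the second type. More generally, when $X_n$ is obtained from $X_{n-1}$ by attaching cells of the form $(C_2/H_\alpha) \times D^n$, the filtration quotient $X_n/X_{n-1}$ is a wedge of suspensions $\Sigma^n (C_2/H_\alpha)_+$, so its cohomology is itself a direct sum of shifts of $\Sigma^{n,0}\Mt$ and $\Sigma^{n,0}\A_0$, which are already of the required form.

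The inductive step uses the long exact sequence associated to the cofiber sequence $X_{n-1} \hookrightarrow X_n \to X_n/X_{n-1}$, producing the short exact sequence of bigraded $\Mt$-modules
\[
0 \to \cok(d) \to \tilde{H}^{*,*}(X_n) \to \ker(d) \to 0.
\]
By the inductive hypothesis the source and target of $d$ both decompose as sums of shifts of $\Mt$ and $\A_m$, so it remains to show (i) that $\ker(d)$ and $\cok(d)$ again have the claimed form, and (ii) that this extension always splits. For (i), I would classify all $\Mt$-module maps between the two basic summand types: a map out of a free summand is determined by the image of its generator, and the $\tau$-torsion in $\Mt^-$ together with the $\tau$-periodic, $\rho^{m+1}$-annihilated structure of $\A_m$ sharply restrict these images. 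A case analysis then shows that kernels and cokernels stay in the stated form, typically with $\A_m$ summands being lengthened or shortened by an adjacent free summand "filling in" the end of the strip. A normalization by multiplication by $\rho^k \tau^\ell$ keeps the bidegrees of the free summands in the region $0 \le q \le p$.

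The main obstacle is (ii). The splitting reduces to a vanishing statement for $\operatorname{Ext}^1_{\Mt}$ between the two basic types of summands in the appropriate bidegrees. Since a shifted free module is projective, $\operatorname{Ext}^1_{\Mt}(\Sigma^{p,q}\Mt, -) = 0$, so the cases requiring genuine work are $\operatorname{Ext}^1_{\Mt}(\Sigma^{r,0}\A_m, \Sigma^{p,q}\Mt)$ and $\operatorname{Ext}^1_{\Mt}(\Sigma^{r,0}\A_m, \Sigma^{s,0}\A_n)$. I would compute these using a small free resolution of $\A_m$ encoding the relations $\rho^{m+1} = 0$ together with the $\theta$-divisibility in the lower cone, and then verify by hand that any candidate extension class is realized by a bona fide $\Mt$-homomorphism and hence can be split off. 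The delicate part is the bookkeeping here: tracking how $\theta$-divisibility, $\tau$-inversion on $\A_m$, and $\rho$-nilpotence interact in each bidegree to either produce a visible splitting or to rule out the extension class altogether. The bidegree constraints $0 \le q_i \le p_i$, $0 \le r_j$, and $0 \le n_j$ in the statement then follow automatically from the normalization in the inductive step, since every new summand introduced when attaching an $n$-cell is born in weight zero and topological dimension $n \ge 0$.
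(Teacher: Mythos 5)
This statement is not proved in the paper at all: it is the structure theorem imported from \cite{CM}, so there is no ``paper's own proof'' here to match, and your proposal has to stand on its own. It does not, because its two load-bearing claims are both false. First, the class of modules $\bigl(\bigoplus_i \Sigma^{p_i,q_i}\Mt\bigr)\oplus\bigl(\bigoplus_j \Sigma^{r_j,0}\A_{n_j}\bigr)$ is \emph{not} closed under taking kernels and cokernels of degree-$(1,0)$ maps arising as differentials. A minimal counterexample is the standard $C_2$-CW structure on $S^{2,1}$: start with the fixed circle, so $\tilde{H}^{*,*}(X_1)\cong\Sigma^{1,0}\Mt$, and attach one free $2$-cell, so the quotient has cohomology $\Sigma^{2,0}\A_0$. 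The differential sends the free generator to the generator of $\A_0$; its kernel is $\Sigma^{1,0}\bigl(\rho\,\Mt^{+}\oplus\Mt^{-}\bigr)$ and its cokernel is a negatively $\tau$-graded truncation of $\A_0$, and neither is a direct sum of shifted copies of $\Mt$ and $\A_n$. Second, the resulting short exact sequence does \emph{not} split: the middle term is $\Sigma^{2,1}\Mt$, a nontrivial extension of those two truncated pieces. Non-split extensions of this kind are exactly the phenomenon behind the Kronholm shifts; the paper's own Example \ref{RP^2} and the case $(c)$ analysis in Theorem \ref{main thm} make the point explicitly that ``the usual short exact sequence does not split.'' So your plan of proving an $\mathrm{Ext}^1$-vanishing statement between basic summands cannot succeed (the only easy vanishing is into free targets, by self-injectivity as in Lemma \ref{injectivity lemma}, and that is not where the difficulty lies).

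More structurally, the theorem cannot be proved by pure $\Mt$-module algebra applied to the cellular filtration: modules outside the stated class (truncated cones, the ``Jack-O-Lantern'' modules mentioned in Section \ref{Kronholm proof}) do occur as kernels, cokernels, and associated graded pieces; what the structure theorem asserts is that they never occur as the actual cohomology of a space. Ruling them out requires topological input at the level of the extension problem--in \cite{CM} this is done by solving (not splitting) the extension, using the localization comparisons with the fixed set and the underlying space (as in Lemma \ref{rho localization} and Remark \ref{tau-torsion remark}), the $\theta$-criterion of Lemma \ref{theta lemma} for detecting free summands, and the self-injectivity of $\Mt$ to split those summands off. Your outline would need to be rebuilt around that kind of argument: identify the maximal free part topologically, control the remainder via localization, and analyze the non-split extensions directly, rather than asserting closure of the class under kernels, cokernels, and split extensions.
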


The goal of this paper is to show that for the special case of a finite $\Rep(C_2)$-complex, the cohomology is free.  That is, we will show the cohomology contains only shifted copies of $\Mt$ and not any copies of $\A_n$.

From the structure theorem we immediately obtain a description of the localizations of the cohomology of a finite $C_2$-CW complex.  Notice that $\A_n$ is preserved by $\tau$-localization while $\tau^{-1}\Mt \cong \A_\infty \cong \F_2[\tau^{\pm 1},\rho]$.
On the other hand $\rho$-localization kills $\A_n$ and $\rho^{-1}\Mt \cong \F_2[\tau,\rho^{\pm 1}]$.

\begin{cor}
Let $X$ be a finite $C_2$-CW complex with
\[
H^{*,*}(X) \cong \left(\bigoplus_i \Sigma^{p_i,q_i} \Mt\right) \oplus \left(\bigoplus_j \Sigma^{r_j,0} \A_{n_j}\right).
\]
Then
\[
\tau^{-1}H^{*,*}(X) \cong \left(\bigoplus_i \Sigma^{p_i,0} \A_\infty\right) \oplus \left(\bigoplus_j \Sigma^{r_j,0} \A_{n_j}\right)
\]
and
\[
\rho^{-1}H^{*,*}(X) \cong \bigoplus_i \Sigma^{p_i-q_i,0} \left(\rho^{-1}\Mt\right).
\]
\end{cor}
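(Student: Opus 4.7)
The plan is to observe that localization at an element is an exact functor and therefore commutes with direct sums, so it suffices to compute the localization of each type of summand appearing in the structure theorem. The excerpt already records the key facts about the two kinds of summands: every nonzero element of $\A_n$ is in the image of $\tau$ and is not $\tau$-torsion, so $\tau$ already acts invertibly on $\A_n$; on the other hand the relation $\rho^{n+1}=0$ in $\A_n$ makes $\rho$ nilpotent. Also, the two ring identifications $\tau^{-1}\Mt \cong \A_\infty \cong \F_2[\tau^{\pm 1},\rho]$ and $\rho^{-1}\Mt \cong \F_2[\tau,\rho^{\pm 1}]$ are already in hand.

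For the $\tau$-localization, I would compute each summand separately. For the free summands, $\tau^{-1}(\Sigma^{p_i,q_i}\Mt) \cong \Sigma^{p_i,q_i}\A_\infty$, and since $\tau$ is a unit of bidegree $(0,1)$ in $\A_\infty$, multiplication by $\tau^{q_i}$ is an $\Mt$-module isomorphism $\Sigma^{p_i,q_i}\A_\infty \xrightarrow{\cong} \Sigma^{p_i,0}\A_\infty$, reindexing the generator onto the $p$-axis. For the antipodal summands, $\tau^{-1}(\Sigma^{r_j,0}\A_{n_j}) \cong \Sigma^{r_j,0}\A_{n_j}$ because $\tau$ already acts invertibly. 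Summing over $i$ and $j$ gives the claimed formula.

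For the $\rho$-localization, each antipodal summand is killed: $\rho^{-1}(\Sigma^{r_j,0}\A_{n_j})=0$ since $\rho^{n_j+1}=0$ in $\A_{n_j}$. Each free summand becomes $\Sigma^{p_i,q_i}(\rho^{-1}\Mt)$, and since $\rho$ is a unit of bidegree $(1,1)$ in $\rho^{-1}\Mt$, multiplication by $\rho^{q_i}$ provides an isomorphism $\Sigma^{p_i,q_i}(\rho^{-1}\Mt) \xrightarrow{\cong} \Sigma^{p_i-q_i,0}(\rho^{-1}\Mt)$, shifting the generator onto the $p$-axis at topological degree $p_i - q_i$ (the fixed-set dimension). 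Taking the direct sum yields the stated description.

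Honestly, there is no real obstacle: the content of the corollary is just a bookkeeping exercise combining the Structure Theorem with exactness of localization and the facts about $\tau$ and $\rho$ already recorded immediately before the statement. The only mild care needed is to confirm that the isomorphisms induced by multiplication by $\tau^{q_i}$ and $\rho^{q_i}$ really do realize the claimed bidegree shifts — which is immediate from $|\tau|=(0,1)$ and $|\rho|=(1,1)$.
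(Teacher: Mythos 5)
Your proposal is correct and matches the paper's reasoning: the corollary is stated as an immediate consequence of the Structure Theorem together with the facts recorded just before it (that $\tau$-localization preserves $\A_n$ with $\tau^{-1}\Mt \cong \A_\infty$, while $\rho$-localization kills $\A_n$ with $\rho^{-1}\Mt \cong \F_2[\tau,\rho^{\pm 1}]$), applied summand by summand exactly as you do. Your explicit check that multiplication by $\tau^{q_i}$ and $\rho^{q_i}$ realizes the regrading onto the $p$-axis is the only detail the paper leaves implicit.
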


\begin{remark}\label{rho-torsion remark}
Thus for $X$ a finite complex, $H^{*,*}(X)$ is free if and only if $\tau^{-1}H^{*,*}(X)$ has no $\rho$-torsion.  Furthermore, $\rho^{-1}H^{*,*}(X)$ depends only on the fixed-set dimensions of the free generators in $H^{*,*}(X)$.
\end{remark}

In addition to knowing the cohomology is free, we would like to know where the free generators live.  The cohomology of any finite $C_2$-CW complex has ``vanishing regions" where the cohomology is zero.  The following proposition and corollary appear in \cite{CM}.

\begin{prop}\label{vanishing}
Let $X$ be a finite $C_2$-CW complex of dimension $m$.   Then $H^{p,q}(X) = 0$ whenever
\begin{enumerate}
\item[$(1)$] $p < 0$ and $q > p-2$, or
\item[$(2)$] $p > m$ and $q < p-m$.
\end{enumerate}
\end{prop}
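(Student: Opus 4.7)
The plan is to induct on the number of cells in a $C_2$-CW structure for $X$, using the long exact sequence associated to each one-cell-at-a-time cofiber sequence $X_{k-1}\hookrightarrow X_k\to X_k/X_{k-1}$. Starting from $X_{-1}=\emptyset$, where both vanishing statements hold vacuously, I would attach a single cell $G/H\times D^n$ at each step (with $0\le n\le m$, since $\dim X = m$, and with $H\in\{\{e\},C_2\}$) and invoke exactness of
\[
\tilde H^{p,q}(X_k/X_{k-1})\to \tilde H^{p,q}(X_k)\to \tilde H^{p,q}(X_{k-1})
\]
to propagate vanishing: whenever the two outer groups vanish in regions $(1)$ and $(2)$, so does the middle. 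Because $H^{*,*}(\mathrm{pt})=\Mt$ itself already satisfies both vanishing conditions for every $m\ge 0$, the distinction between reduced and unreduced cohomology is immaterial.

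The real content is then to verify that each single-cell filtration quotient $(G/H)_+\wedge S^n$ with $0\le n\le m$ vanishes in both regions. For a trivial-orbit cell ($H=C_2$) the quotient is $S^n$ with trivial action, whose reduced cohomology is $\Sigma^{n,0}\Mt$; its top cone sits in $\{p\ge n,\ q\ge p-n\}$ and its bottom cone in $\{p\le n,\ q\le p-n-2\}$. Using $0\le n\le m$, a direct inequality-chase shows both cones are disjoint from both vanishing regions; for instance, in region $(2)$ the top cone forces $q\ge p-n\ge p-m$, contradicting $q<p-m$, while the bottom cone forces $p\le n\le m$, contradicting $p>m$. For a free-orbit cell ($H=\{e\}$) the Wirthm\"uller isomorphism identifies
\[
\tilde H^{p,q}\bigl((C_2)_+\wedge S^n;\underline{\F_2}\bigr)\cong \tilde H^{p}_{\sing}(S^n;\F_2),
\]
independently of the weight $q$; this is $\F_2$ when $p=n$ and zero otherwise. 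Since $0\le n\le m$, the single topological degree $p=n$ lies outside both vanishing regions.

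The step requiring the most care is this last computation for $(C_2)_+\wedge S^n$ via the Wirthm\"uller/induction isomorphism, where one has to know that the Bredon cohomology of a space induced from the trivial subgroup collapses to nonequivariant singular cohomology, independent of the weight. Once that is in hand, the rest of the proof is a bookkeeping comparison of bidegrees against the two vanishing cones, together with an invocation of exactness at each inductive step; I would not expect any further obstacles.
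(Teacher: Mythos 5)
Your argument is correct, and it is essentially the argument behind the statement: the paper itself gives no proof, quoting the proposition from \cite{CM}, where it is established by the same cellular induction over the filtration using the long exact sequences and the computation of the filtration quotients. In particular, your identification of the two possible quotient pieces, $\Sigma^{n,0}\Mt$ for trivial-orbit cells and the weight-independent line $\tilde{H}^{p,q}((C_2)_+\wedge S^n)\cong \Sigma^{n,0}\A_0$ for free-orbit cells, together with the inequality checks for $0\le n\le m$, is exactly what makes the induction close up.
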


In particular, we have the immediate corollary.
\begin{cor}\label{vanishing cor}
Any generator of a copy of $\Mt$ in the cohomology of a finite $m$-dimensional $C_2$-CW complex $X$ must lie in a bidegree $(p,q)$ corresponding to an actual representation with topological dimension $p$ satisfying $0 \leq p \leq m$ and weight $0 \leq q \leq p$.
\end{cor}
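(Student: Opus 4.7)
The plan is to combine Theorem \ref{structure thm} with Proposition \ref{vanishing}. The Structure Theorem expresses $H^{*,*}(X)$ as a direct sum of shifted copies $\Sigma^{p_i,q_i}\Mt$, with $0 \le q_i \le p_i$, together with shifted copies $\Sigma^{r_j,0}\A_{n_j}$. The constraint $0 \le q_i \le p_i$ already supplies the weight inequalities $0 \le q \le p$ (and in particular $p \ge 0$) demanded by the corollary, so the entire content of the statement reduces to proving the upper bound $p_i \le m$ for every $\Mt$-summand.

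I would establish that bound by contradiction. Suppose some summand $\Sigma^{p_i,q_i}\Mt$ has $p_i > m$. Its lower cone contains a nonzero class in bidegree $(p_i,\, q_i - 2 - b)$ for every integer $b \ge 0$ (concretely, the shift of $\theta/\tau^b$, which is well defined because $\theta$ is infinitely $\tau$-divisible). Since $q_i \le p_i$, any $b > q_i - 2 - p_i + m$ is automatically nonnegative, so such a $b$ exists and delivers a bidegree satisfying $q_i - 2 - b < p_i - m$. As the class lives in a direct summand of $H^{*,*}(X)$, it remains nonzero in $H^{p_i,\, q_i - 2 - b}(X)$.

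Proposition \ref{vanishing}(2) then finishes the argument: with $p := p_i > m$ and $q := q_i - 2 - b < p - m$, the group $H^{p,q}(X)$ must vanish, contradicting the nonzero class just exhibited. Hence $p_i \le m$, completing the proof.

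I do not anticipate a serious obstacle. The only care required is keeping the shifted bidegree arithmetic in the lower cone straight—in particular, remembering that dividing by $\tau$ decreases the weight while leaving the topological dimension fixed, so the lower cone of $\Sigma^{p_i,q_i}\Mt$ extends indefinitely downward at $p = p_i$ and must eventually enter the vanishing wedge of Proposition \ref{vanishing}(2) as soon as $p_i > m$. Theorem \ref{structure thm} has already done the structural work of decomposing $H^{*,*}(X)$; Proposition \ref{vanishing} supplies exactly the vanishing range needed to rule out $\Mt$-generators with topological dimension exceeding $m$.
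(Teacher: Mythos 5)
Your proposal is correct and matches the intended argument: the paper treats this as immediate from the Structure Theorem (which supplies $0 \le q_i \le p_i$) combined with the vanishing region of Proposition \ref{vanishing}(2), exactly as you do, using the infinite $\tau$-divisibility of $\theta$ in the lower cone to push a nonzero class of the summand into the forbidden region when $p_i > m$. The only quibble is the phrasing ``any $b > q_i - 2 - p_i + m$ is automatically nonnegative,'' which is not literally true; what you need (and trivially have) is that \emph{some} nonnegative integer $b$ exceeds $q_i - 2 - p_i + m$.
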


The region where $\Mt$ generators can lie is depicted by the triangle in Figure \ref{vanishing fig} on the right.

\begin{figure}[ht]
\begin{tikzpicture}[scale=0.375]

\draw[gray] (-4.5,0) -- (9.5,0);
\draw[gray] (0,-6.5) -- (0,5.5);

\fill[lightgray,opacity=0.25] (0,5.5) -- (0,-2) -- (-4.5,-6.5) -- (-4.5,5.5) -- cycle;
\draw[thick,gray,dashed] (0,5.5) -- (0,-2) -- (-4.5,-6.5);

\fill[lightgray,opacity=0.25] (4,-6.5) -- (4,0) -- (9.5,5.5) -- (9.5,-6.5) -- cycle;
\draw[thick,gray,dashed] (4,-6.5) -- (4,0) -- (9.5,5.5);

\fill[gray] (0,0) -- (4,0) -- (4,4) -- cycle;
\draw[thick] (0,0) -- (4,0) -- (4,4) -- cycle;

\draw (0.1,-2) -- (-0.1,-2) node[left] {\tiny $-2$};
\draw (9.5,0) node[below,black] {\small $p$};
\draw (0,5.5) node[left,black] {\small $q$};
\draw (0.1,4) -- (-0.1,4) node[left] {\tiny $m$};
\draw (4,0) node[below right] {\tiny $m$};

\end{tikzpicture}
\caption{Vanishing regions and region containing $\Mt$ generators.}\label{vanishing fig}
\end{figure}
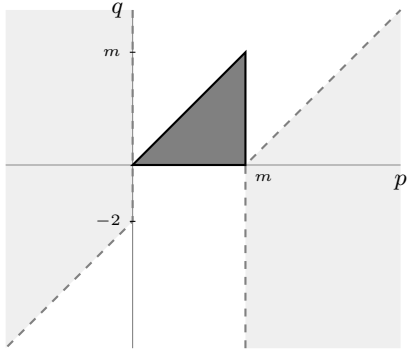

To find $\Mt$ generators we may use $\theta$ as in the following lemma from \cite{CM}.

\begin{lemma}\label{theta lemma}
If a graded $\Mt$-module contains a nonzero homogenous element $x$ with $\theta x$ nonzero, then $\Mtb{x}$ is a graded free submodule.
\end{lemma}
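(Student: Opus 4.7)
The goal is to show the annihilator of $x$ in $\Mt$ is zero, i.e., $\alpha x \neq 0$ for every nonzero homogeneous $\alpha \in \Mt$. If so, the canonical surjection $\Sigma^{|x|}\Mt \twoheadrightarrow \Mtb{x}$ sending $1 \mapsto x$ is an isomorphism, exhibiting $\Mtb{x}$ as a rank-one graded free submodule. So the plan is to show this annihilator vanishes, using only the hypothesis that $\theta x \neq 0$ together with the structure of $\Mt$ displayed in Figure \ref{M2}.

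The key observation I would isolate is a divisibility feature of $\Mt$: every nonzero homogeneous $\alpha \in \Mt$ admits some $\beta \in \Mt$ with $\alpha \beta = \theta$. To verify this, I first note that the top and bottom cones $\Mt^+$ and $\Mt^-$ occupy disjoint bidegree regions (the top cone lies in $0 \leq p \leq q$ and the bottom cone in $p \leq 0$, $q \leq p-2$), so any homogeneous $\alpha$ lies in exactly one of them. If $\alpha \in \Mt^+$, its bidegree pins it down as $\alpha = \tau^a \rho^b$ for some $a,b \geq 0$ (each bidegree in the top cone contains a single nonzero element over $\F_2$), and one can then take $\beta = \theta/(\tau^a \rho^b) \in \Mt^-$, giving $\alpha \beta = \theta$. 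If instead $\alpha \in \Mt^-$, then $\alpha = \theta/(\tau^a \rho^b)$ for some $a,b \geq 0$, and the symmetric choice $\beta = \tau^a \rho^b \in \Mt^+$ yields $\beta \alpha = \theta$.

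Once this factorization is in hand, the proof concludes in one line: if $\alpha x = 0$ for a nonzero homogeneous $\alpha$, choose $\beta$ with $\alpha \beta = \theta$, and then
\[
\theta x = (\alpha \beta) x = \beta (\alpha x) = 0,
\]
contradicting the hypothesis $\theta x \neq 0$. Hence $\alpha x \neq 0$ for every nonzero homogeneous $\alpha$, and $\Mtb{x}$ is free.

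I do not foresee a genuine obstacle here: the lemma is really the statement that $\theta$ is a multiple of every nonzero homogeneous element of $\Mt$, so acts as a universal witness of nonvanishing. The only care required is in the bidegree bookkeeping for the two-cone decomposition used to establish the factorization $\alpha \beta = \theta$.
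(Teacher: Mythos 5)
Your proof is correct: since every nonzero homogeneous element $\alpha$ of $\Mt$ is either $\tau^a\rho^b$ or $\theta/(\tau^a\rho^b)$ and hence divides $\theta$, any homogeneous annihilator of $x$ would force $\theta x = 0$, so the graded map $\Sigma^{|x|}\Mt \to \Mtb{x}$ has trivial (graded) kernel and $\Mtb{x}$ is free. The paper itself does not reprove this lemma (it imports it from \cite{CM}), and your divisibility argument is essentially the same one used there, so there is nothing to correct.
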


In \cite{CM} it is also shown that $\Mt$ is self-injective.

\begin{lemma}\label{injectivity lemma}
The regular module $\Mt$ is injective as a graded $\Mt$-module.
\end{lemma}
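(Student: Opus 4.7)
The plan is to apply Baer's criterion in the graded setting: $\Mt$ is graded-injective as a module over itself iff every graded $\Mt$-module map $\phi\colon I \to \Mt$ from a graded ideal $I \subseteq \Mt$ extends to a map $\tilde\phi\colon \Mt \to \Mt$. The first step is to understand the graded ideals of $\Mt$. Using the decomposition $\Mt = \Mt^+ \oplus \Mt^-$ as $\F_2[\rho,\tau]$-modules, the vanishing $\Mt^- \cdot \Mt^- = 0$ (a consequence of $\theta^2 = 0$ together with the vanishing regions that force all other products in the lower cone to be zero), and the fact that the $\Mt^+$-action on $\Mt^-$ presents $\Mt^-$ as a shift of the injective hull of $\F_2$ over $\F_2[\rho,\tau]$, one can classify principal graded ideals. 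They come in two shapes: those generated by an upper-cone monomial $\rho^a\tau^b$, which consist of a shifted upper cone inside $\Mt^+$ together with all of $\Mt^-$; and those generated by a lower-cone element $\theta/(\rho^a\tau^b)$, which form finite rectangular ``corners'' inside $\Mt^-$. A general graded ideal is a sum of principal ideals of these two types.

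With the classification in hand, I would run a standard Zorn's lemma argument, reducing to the problem of extending $\phi$ one generator at a time: given a proper graded ideal $J \supseteq I$ with a fixed extension $\psi\colon J \to \Mt$ and a homogeneous $x \notin J$, produce a value $\psi(x) \in \Mt$ satisfying the compatibility $r \cdot \psi(x) = \psi(rx)$ for every $r \in \Mt$ with $rx \in J$. The compatibility is a system of divisibility constraints in $\Mt$, which I would solve by a case analysis on the bidegree of $x$ and on which of the two ideal types describes the submodule $\Mt \cdot x \cap J$. In each case, the infinite divisibility of elements of $\Mt^-$ by every monomial in $\Mt^+$ supplies a consistent choice of $\psi(x)$ lying in an appropriate shift of $\Mt^-$ (or, when $x \in \Mt^+$ and $J$ does not already force a specific target, in $\Mt^+$ itself).

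The main obstacle will be this last step of producing a simultaneously consistent divisor. Because $\Mt$ is non-Noetherian, the annihilator of $x$ together with the relations coming from $J$ can be a complicated graded ideal, and coherent lifting amounts to verifying that $\Mt^-$ is divisibly injective in a precise sense; this must be checked carefully rather than by appeal to general Noetherian machinery. However, the required divisibility is exactly the statement that $\Mt^-$ is the graded injective hull of $\F_2$ over $\F_2[\rho,\tau]$, which is verified directly from the explicit presentation of $\Mt^-$ as the module spanned by symbols $\theta/(\rho^a\tau^b)$ with the action described in Section~\ref{preliminaries}. Once this is in place, Baer's criterion delivers the injectivity of $\Mt$.
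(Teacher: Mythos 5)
The paper does not prove this lemma itself; it simply cites \cite{CM} for the self-injectivity of $\Mt$. So your self-contained argument is necessarily a different route, and its skeleton is sound: the graded Baer criterion does hold for bigraded rings (Noetherianness is irrelevant there), and your description of the principal graded ideals is correct --- $(\rho^a\tau^b)$ is the shifted upper cone $\rho^a\tau^b\,\Mt^+$ together with all of $\Mt^-$, while $(\theta/(\rho^a\tau^b))$ is a finite rectangle in the lower cone, and every graded ideal is a sum of such. (One small inaccuracy along the way: $\Mt^-\cdot\Mt^-=0$ does not follow from the vanishing regions, since for instance $\theta^2$ sits in bidegree $(0,-4)$ where $\Mt$ is nonzero; one argues instead that for $x,y\in\Mt^-$ one can write $y=\tau^N y'$ with $y'\in\Mt^-$ and $N$ large enough that $\tau^N x=0$.)

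The step you single out as the crux, however, is misstated and would not go through as written. An extension $\psi$ must preserve the bigrading, and every bigraded piece of $\Mt$ is at most one-dimensional over $\F_2$, so on a homogeneous element $x$ the value $\psi(x)$ is forced to lie in $\F_2\cdot x$; there is no ``consistent choice of $\psi(x)$ lying in an appropriate shift of $\Mt^-$'' to be supplied by divisibility. Likewise, the claim that what is needed is ``exactly'' that $\Mt^-$ is the graded injective hull of $\F_2$ over $\F_2[\rho,\tau]$ is off: that is a statement over the subring $\Mt^+$ and does not by itself give injectivity over $\Mt$ --- indeed $\Mt^-$ is that hull, yet it is not injective as an $\Mt$-module, since a retraction $\Mt\to\Mt^-$ would have to send $1$ to the degree-$(0,0)$ part of $\Mt^-$, which is zero. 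The correct finishing move is simpler and makes the Zorn-style one-generator bookkeeping unnecessary: every nonzero homogeneous $x\in\Mt$ satisfies $\theta\in\Mt x$ (take $r_x=\theta/(\rho^a\tau^b)$ if $x=\rho^a\tau^b$, and $r_x=\rho^a\tau^b$ if $x=\theta/(\rho^a\tau^b)$). Hence if $I$ is a nonzero graded ideal and $\phi\colon I\to\Mt$ is degree-preserving, then $\theta\in I$, and writing $\phi(x)=\epsilon_x x$ with $\epsilon_x\in\F_2$ one gets $\epsilon_\theta\theta=\phi(r_x x)=r_x\phi(x)=\epsilon_x\theta$, so $\epsilon_x=\epsilon_\theta$ for every homogeneous $x\in I$; thus $\phi$ is either zero or the inclusion, and extends to $0$ or $\mathrm{id}_{\Mt}$. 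With that repair your Baer-criterion approach does yield the lemma, and it is the divisibility of $\theta$ (every upper-cone monomial has $\theta$ as a multiple) rather than injectivity of $\Mt^-$ over $\F_2[\rho,\tau]$ that is doing the work.
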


Thus to find free summands of an $\Mt$-module, it is often useful to find an element with a nontrivial $\theta$-multiple.  Such an element generates a free submodule, which splits off as a direct summand because $\Mt$ is self-injective.

\section{Change of basis for free modules}\label{change of basis}
Following in Kronholm's footsteps, in the proof of the freeness theorem in Section \ref{main section}, we will induct on the number of cells of a $\Rep(C_2)$-complex and attach one cell at a time.  For the inductive step, we will need to consider differentials from a free module to a single shifted $\Mt$ corresponding to the newly attached cell.  Kronholm's paper includes a change of basis lemma that simplifies the differentials in this setting \cite[Lemma 3.1]{K}.  However, there is a small error\footnote{A more significant error in Kronholm's main proof will be explained in Section \ref{Kronholm proof}.} in the proof of this lemma.  For completeness, we will first prove two algebraic change of basis lemmas inspired by Kronholm's argument.  In Section \ref{basis for attaching}, we use these algebraic results and a fact about $\rho$-localization to easily deduce the change of basis lemma that appears in \cite{K}.

The hypotheses of both lemmas in this section include restrictions on the topological dimensions and weights of the generators of the free module and its target.  These same constraints will appear in the change of basis lemma in Section \ref{basis for attaching}.  They appear again in the inductive step of the main theorem and are due to the ordering of the cells attached.

For the first change of basis, we consider a free module $\Gamma$ supporting a nonzero map $d$ to the top cone of $\Sigma^{p,q}\Mt = \Mtb{\nu}$.  We will show there is a change of basis for $\Gamma$ so that only one of the basis elements $\lambda$ supports a nonzero map.  The element $\lambda$ will map to $\Mt^+\langle \nu \rangle$.  An example of such a map is depicted in Figure \ref{top cone figure}.  There is a single arrow shown in Figure \ref{top cone figure} because the $\Mt$-module map $d$ is determined by its image on the generator $\lambda$.  However, the map is nonzero in infinitely many other bidegrees.
For example, if $d(\lambda)=\tau \nu$ as depicted, then $d(\tau \lambda) = \tau^2 \nu$, $d\left(\frac{\theta}{\tau} \nu \right) = \theta \nu$, and so on.

\begin{figure}[ht]
\begin{center}
\begin{tikzpicture}[scale=0.6]
\draw[gray] (-2.5,0) -- (5.5,0);
\draw[gray] (0,-2.5) -- (0,7.5);

\draw (2,0.1) -- (2,-0.1) node[below] {\small $p$};
\draw (0.1,2) -- (-0.1,2) node[left] {\small $q$};

\draw[->,thick] (1,3) -- (1.95,3);
\draw (1.6,3) node[above] {\small $d$};

\draw[thick, red] (1,3) -- (5.5,7.5);
\draw[thick, red] (1,3) -- (1,7.5);
\draw[thick, red] (1,1) -- (1,-2.5);
\draw[thick, red] (1,1) -- (-2.5,-2.5);
\draw[red] (1,3) node[below] {$\lambda$};
\fill[red] (1,3) circle(2.5pt);

\draw[thick, blue] (2,2) -- (5.5,5.5);
\draw[thick, blue] (2,2) -- (2,7.5);
\draw[thick, blue] (2,0) -- (2,-2.5);
\draw[thick, blue] (2,0) -- (-0.5,-2.5);
\draw[blue] (2,2) node[below] {$\nu$};
\fill[blue] (2,2) circle(2.5pt);

\end{tikzpicture}
\end{center}
\caption{A single generator $\lambda$ mapping to the upper cone of $\Mtb{\nu}$.}
\label{top cone figure}
\end{figure}
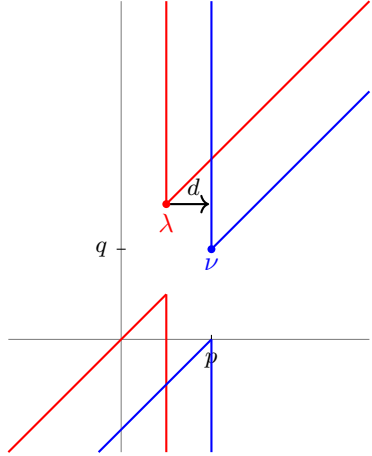

\begin{lemma}\label{top cone lemma}
Consider a nonzero graded $\Mt$-module homomorphism with bidegree $(1,0)$ of the form $\Gamma = \Mtb{ \gamma_0, \gamma_1, \dots, \gamma_{m}} \xrightarrow{d} \Mtb{\nu}$ where $|\nu| = (p,q)$.  Assume for all $i$ that $\Top(\gamma_i) \leq p$ and whenever $\Top(\gamma_i) = p$ that $\wt(\gamma_i) \leq q$.

Suppose $\im (d)$ has a nonzero value in $\Mt^+\langle \nu \rangle$. Then there is a change of basis for $\Gamma$ so that
\[
\Gamma \cong \Mtb{\lambda, \chi_1, \dots, \chi_m},
\]
where
	\begin{itemize}
	\item $d(\lambda)$ is nonzero,
	\item $d(\chi_i) = 0$ for all $i$, and
	\item $\Top(\lambda) = p - 1$ and $\wt(\lambda) \geq q$;
	\end{itemize}
	making $\lambda$ the only basis element with a nonzero image.
\end{lemma}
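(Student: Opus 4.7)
The plan is to use the bidegree hypotheses on the $\gamma_i$ to locate each image $d(\gamma_i)$ inside $\Mtb{\nu}$, then pick a single basis element $\lambda$ whose image already detects the top cone. Every other basis element's image will lie in a region of $\Mtb{\nu}$ that is $\Mt$-accessible from $d(\lambda)$, and so can be killed by subtracting an appropriate $\Mt$-multiple of $\lambda$ without disturbing bidegrees.

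First I would do the bookkeeping. Because $d$ has bidegree $(1,0)$, $d(\gamma_i)$ lives at bidegree $(\Top(\gamma_i)+1,\wt(\gamma_i))$ in $\Mtb{\nu}=\Sigma^{p,q}\Mt$. Comparing this location to the top and bottom cones of $\Mtb{\nu}$ using the hypotheses on $\Top(\gamma_i)$ and $\wt(\gamma_i)$ yields a trichotomy: (i) if $\Top(\gamma_i)=p$, the target bidegree sits in neither cone, forcing $d(\gamma_i)=0$; (ii) if $\Top(\gamma_i)=p-1$ and $\wt(\gamma_i)\geq q$, the target lies in the top cone, so $d(\gamma_i)\in\{0,\tau^{\wt(\gamma_i)-q}\nu\}$; (iii) in all other cases a nonzero $d(\gamma_i)$ must lie in the bottom cone $\Mt^-\langle\nu\rangle$. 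Since the bottom cone is closed under the $\Mt$-action, the hypothesis that $\im(d)\cap\Mt^+\langle\nu\rangle\neq 0$ forces at least one generator of type (ii) with nonzero image.

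I would then let $\lambda=\gamma_0$ be, among type-(ii) generators with nonzero image, one of minimum weight $q_0:=\wt(\lambda)\geq q$. Then $d(\lambda)=\tau^{q_0-q}\nu$, $\Top(\lambda)=p-1$, and $\wt(\lambda)\geq q$, matching the desired conclusion on $\lambda$. For each other $\gamma_i$ with $d(\gamma_i)\neq 0$ I will produce $r_i\in\Mt$ satisfying $r_i\cdot d(\lambda)=d(\gamma_i)$ and set $\chi_i=\gamma_i-r_i\lambda$ (with $r_i=0$, $\chi_i=\gamma_i$ whenever $d(\gamma_i)=0$). In case (ii), take $r_i=\tau^{\wt(\gamma_i)-q_0}\in\Mt^+$, which is legitimate precisely because $q_0$ was chosen minimal. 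In case (iii), write $d(\gamma_i)=s_i\nu$ with $s_i\in\Mt^-$; I need $r_i\in\Mt^-$ with $r_i\tau^{q_0-q}=s_i$, and such an $r_i$ exists because multiplication by $\tau^{q_0-q}$ is surjective on $\Mt^-$ (every bottom-cone element is infinitely $\tau$-divisible). A short bidegree check confirms that $r_i\lambda$ and $\gamma_i$ share a bidegree, so $\chi_i$ is homogeneous of the correct bidegree.

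Finally, the transition matrix from $\{\gamma_0,\ldots,\gamma_m\}$ to $\{\lambda,\chi_1,\ldots,\chi_m\}$ is triangular with $1$'s on the diagonal, hence invertible over $\Mt$, so the new set remains a free basis of $\Gamma$. By construction $d(\chi_i)=d(\gamma_i)-r_i\,d(\lambda)=0$ for every $i\geq 1$ while $d(\lambda)\neq 0$, proving the lemma. The main point of difficulty is case (iii): cancelling top-cone images by $\tau$-power adjustments is routine once $q_0$ is taken minimal, but using the same $\lambda$ to cancel bottom-cone images depends on the algebraic fact that $\Mt^-$ is $\tau$-divisible, which is exactly what allows a single $\lambda$ to absorb every differential simultaneously.
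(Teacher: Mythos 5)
Your proposal is correct and takes essentially the same approach as the paper: you select the minimal-weight generator hitting the top cone as $\lambda$ and then cancel every other nonzero image by subtracting a $\tau$-power multiple of $\lambda$ (for top-cone images) or a $\tau$-divisible $\Mt^-$-multiple of $\lambda$ (for bottom-cone images), which is exactly the paper's construction. The only difference is in verifying the new homogeneous elements form a basis: you use the unipotent (triangular, $1$'s on the diagonal) transition matrix, while the paper checks independence of the $\chi_i$ directly and invokes self-injectivity of $\Mt$; both justifications are valid.
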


\begin{proof}
Partition a basis for $\Gamma$, reordering if necessary, as
\[
\{\gamma_0, \dots, \gamma_r \} \cup \{ \gamma_{r+1}, \dots, \gamma_{r+s} \} \cup \{\gamma_{r+s+1}, \dots, \gamma_{r+s+t} \},
\]
where the basis elements $\gamma_0, \dots, \gamma_r$ have nonzero images in the top cone $\Mt^+\langle \nu \rangle$, basis elements $\gamma_{r+1}, \dots, \gamma_{r+s}$ have nonzero images in the bottom cone $\Mt^-\langle \nu \rangle$, and $d$ is zero on $\gamma_{r+s+1}, \dots, \gamma_{r+s+t}$.  We allow for the possibility that $s=0$ if there are no basis elements supporting nonzero maps to the bottom cone, or $t=0$ if there are no basis elements mapped to zero. Set
\begin{align*}
	\Delta^+ &= \Mtb{\gamma_0, \dots, \gamma_r}\\
	\Delta^- &=  \Mtb{\gamma_{r+1}, \dots, \gamma_{r+s}}\\
	\Delta^0 &= \Mtb{\gamma_{r+s+1}, \dots, \gamma_{r+s+t}}
\end{align*}
so that $\Gamma \cong \Delta^+ \oplus \Delta^- \oplus \Delta^0$.

We will find a new basis $\{\lambda, \chi_1, \dots, \chi_r\}$ for $\Delta^+$ satisfying $d(\lambda) \neq 0$ and $d(\chi_i) = 0$ for each $i$.  For degree reasons, any $\gamma_i$ with nonzero image in the top cone of $\nu$ has $\Top(\gamma_i) = p-1$.
So for $0 \leq i \leq r$, the image $d(\gamma_i) = \tau^{k_i}\nu$ for $k_i \geq 0$.  Reindexing if necessary, we can assume $\gamma_0$ is of minimal weight among $\gamma_0, \dots, \gamma_r$.  Rename it $\lambda := \gamma_0$ so that $d(\lambda) = \tau^{k_0}\nu$ and $k_i \geq k_0$ for all $i$.
Since $\lambda$ has the lowest weight, each remaining $\gamma_i$ lies in the upper cone of $\lambda$ as in Figure \ref{basis upper}.

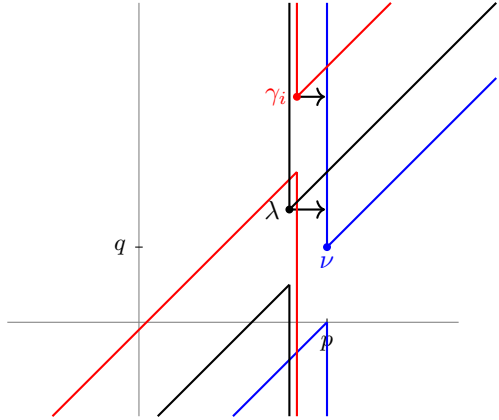
\begin{figure}[ht]
\begin{center}
\begin{tikzpicture}[scale=0.5]
\draw[gray] (-3.5,0) -- (8.5,0);
\draw[gray] (0,-2.5) -- (0,8.5);

\draw (5,0.1) -- (5,-0.1) node[below] {\small $p$};
\draw (0.1,2) -- (-0.1,2) node[left] {\small $q$};

\draw[->,thick] (4,3) -- (4.95,3);
\draw[->,thick] (4.2,6) -- (4.95,6);

\draw[thick, blue] (5,2) -- (9.5,6.5);
\draw[thick, blue] (5,2) -- (5,8.5);
\draw[thick, blue] (5,0) -- (5,-2.5);
\draw[thick, blue] (5,0) -- (2.5,-2.5);
\draw[blue] (5,2) node[below] {$\nu$};
\fill[blue] (5,2) circle(3pt);

\draw[thick, red] (4.2,6) -- (6.7,8.5);
\draw[thick, red] (4.2,6) -- (4.2,8.5);
\draw[thick, red] (4.2,4) -- (4.2,-2.5);
\draw[thick, red] (4.2,4) -- (-2.3,-2.5);
\draw[red] (4.2,6) node[left] {$\gamma_i$};
\fill[red] (4.2,6) circle(3pt);

\draw[thick] (4,3) -- (9.5,8.5);
\draw[thick] (4,3) -- (4,8.5);
\draw[thick] (4,1) -- (4,-2.5);
\draw[thick] (4,1) -- (0.5,-2.5);
\draw (4,3) node[left] {$\lambda$};
\fill (4,3) circle(3pt);

\end{tikzpicture}
\end{center}
\caption{Nonzero images in the upper cone.}\label{basis upper}
\end{figure}

Next we will use $\lambda$ to form the $\chi_i$.  For each $i$ with $1 \leq i \leq r$, replace $\gamma_i$ with $\chi_i := \gamma_i + \tau^{k_i - k_0}\lambda$.  Notice that $\chi_i$ is a homogeneous element in the same bidegree as $\gamma_i$ and that $\chi_i \in \ker(d)$.  The $\chi_i$ are all independent, as a dependence among them would give rise to a dependence among the $\gamma_i$.
Indeed, suppose there existed nonzero elements $M_i\in\Mt$ for $1\le i\le r$ such that each $M_i\chi_i$ is in the same bidegree and $\sum M_i\chi_i=0$. Then
\[
\sum_{i=1}^rM_i\gamma_i+\sum_{i=1}^r M_i\tau^{k_i-k_0}\lambda=\sum_{i=1}^r M_i\gamma_i+\left(\sum_{i=1}^r M_i\tau^{k_i-k_0}\right)\gamma_0=0.
\]
The coefficient of $\gamma_0$ could be zero, but there is at least one $\gamma_i$ with $i > 0$ that has nonzero coefficient $M_i$.  Thus we have a dependence among the basis elements $\gamma_i$, a contradiction.\footnote{Not every $\Mt$-combination of the $\gamma_i$ is independent.
For example, $\{\gamma_1, \gamma_1 + \theta\gamma_2 \}$ is dependent because $\tau \gamma_1 = \tau (\gamma_1 + \theta\gamma_2)$.}  We can therefore include a free module and quotient to get the short exact sequence
\[0\to \bigoplus_{i=1}^r \Mt\langle \chi_i\rangle \to \Delta^+\to Q\to 0,\]
where $Q\iso\Mt\langle\lambda\rangle$.  So $\Delta^+$ is isomorphic to $\Mt\langle \lambda,\chi_1,\dots,\chi_r \rangle$ since $\Mt$ is self-injective. Now we have a basis for $\Delta^+$ with $d(\lambda) \neq 0$ but $d(\chi_i) = 0$ for each $i$.

We use a similar approach to make a change of basis for $\Delta^- =  \Mtb{\gamma_{r+1}, \dots, \gamma_{r+s}}$, now using $\lambda$ to modify the basis elements with nonzero images in the bottom cone.
For $r+1 \leq i \leq r+s$, we have $d(\gamma_i) = \frac{\theta}{\rho^{j_i} \tau^{k_i}} \nu$ for some $j_i, k_i \geq 0$ and it must be the case that $\Top(\gamma_i) \leq p-1$ and $\wt(\gamma_i) \leq q - 2$.
In particular, for $r+1 \leq i \leq r+s$, each basis element $\gamma_i$ has a bidegree that lies inside the lower cone of $\lambda$ as in Figure \ref{basis mixed}.

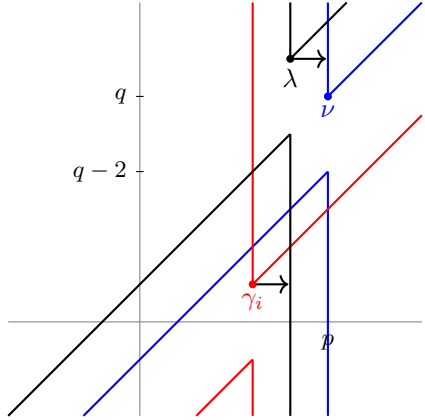
\begin{figure}[ht]
\begin{center}
\begin{tikzpicture}[scale=0.5]
\draw[gray] (-3.5,0) -- (7.5,0);
\draw[gray] (0,-2.5) -- (0,8.5);

\draw (5,0.1) -- (5,-0.1) node[below] {\small $p$};
\draw (0.1,6) -- (-0.1,6) node[left] {\small $q$};
\draw (0.1,4) -- (-0.1,4) node[left] {\small $q-2$};

\draw[->,thick] (4,7) -- (4.95,7);
\draw[->,thick] (3,1) -- (3.95,1);

\draw[thick, blue] (5,6) -- (7.5,8.5);
\draw[thick, blue] (5,6) -- (5,8.5);
\draw[thick, blue] (5,4) -- (5,-2.5);
\draw[thick, blue] (5,4) -- (-1.5,-2.5);
\draw[blue] (5,6) node[below] {$\nu$};
\fill[blue] (5,6) circle(3pt);

\draw[thick, red] (3,1) -- (7.5,5.5);
\draw[thick, red] (3,1) -- (3,8.5);
\draw[thick, red] (3,-1) -- (3,-2.5);
\draw[thick, red] (3,-1) -- (1.5,-2.5);
\draw[red] (3,1) node[below] {$\gamma_i$};
\fill[red] (3,1) circle(3pt);

\draw[thick] (4,7) -- (5.5,8.5);
\draw[thick] (4,7) -- (4,8.5);
\draw[thick] (4,5) -- (4,-2.5);
\draw[thick] (4,5) -- (-3.5,-2.5);
\draw (4,7) node[below] {$\lambda$};
\fill (4,7) circle(3pt);

\end{tikzpicture}
\end{center}
\caption{Images from $\Delta^-$ and $\Delta^+$.}\label{basis mixed}
\end{figure}

Recall that we defined $\lambda := \gamma_0$ above so $d(\lambda) = \tau^{k_0}\nu$.  Replace the basis element $\gamma_i$ (for $r+1 \leq i \leq r+s$) with $\chi_i := \gamma_i + \frac{\theta}{\rho^{j_i} \tau^{k_i + k_0}}\lambda$.  Again the coefficient of $\lambda$ has been chosen so that $\chi_i$ is a homogenous element generating an $\Mt$ in the same bidegree as $\gamma_i$ and $d(\chi_i) = 0$.
As before, the $\chi_i$ are independent.  Thus we have a new basis for $\Delta^-$ given by $\chi_{r+1}, \dots, \chi_{r+s}$.

Finally, for $\Delta^0 = \Mtb{\gamma_{r+s+1}, \dots, \gamma_{r+s+t}}$ no real change of basis is required.  The map $d$ is already zero on each basis element, so simply define $\chi_i := \gamma_i$ for $r+s+1 \leq i \leq r+s+t$.  Taking the union of the new bases, we now have a basis for $\Gamma \cong \Delta^+ \oplus \Delta^- \oplus \Delta^0$ of the form $\lambda, \chi_1, \dots, \chi_m$ with only $\lambda$ having a nonzero image.
\end{proof}

In the next case, a change of basis for the free module $\Gamma$ will produce a particularly nice subbasis of free generators called a ``ramp,'' as described in \cite{K} (also referred to as a ``stairstep" pattern in \cite{FL}).

\begin{defn}\label{rampdef}
	A collection $\omega_1, \dots, \omega_n$ of homogeneous elements satisfies the \mdfn{ramp condition} if $\Top(\omega_i) < \Top(\omega_{i+1})$ and $\fix(\omega_i) < \fix(\omega_{i+1})$ for each $i$.
	Such elements are referred to as a \mdfn{ramp} of length $n$.
\end{defn}

Note that the increasing topological dimension and increasing fixed-set dimension in the ramp condition means each $\omega_{i+1}$ is to the right of $\omega_i$ and on a lower diagonal.  If $\omega_1, \dots, \omega_n$ are generators of a free $\Mt$-module forming a ramp, no $\omega_i$ lies in a bidegree that sits inside the upper cone $\Mt^+\langle \omega_j \rangle$ for $i \neq j$.
An example of $\Mt$ generators satisfying the ramp condition is depicted in Figure \ref{ramp figure}.  The details of the ramp condition will not play a big role in the proof of the freeness theorem, however they will be crucial in Section \ref{shifts} where we determine the bidegrees of the free generators after a nontrivial differential.

\begin{figure}[ht]
\begin{center}
\begin{tikzpicture}[scale=0.4]
\draw (0,4) -- (2.5,6.5);
\draw (0,4) -- (0,6.5);
\draw (0,2) -- (0,-3.5);
\draw (0,2) -- (-5.5,-3.5);
\fill (0,4) circle (3.75pt) node[below] {$\omega_1$};

\draw (2,2) -- (6.5,6.5);
\draw (2,2) -- (2,6.5);
\draw (2,0) -- (2,-3.5);
\draw (2,0) -- (-1.5,-3.5);
\fill (2,2) circle (3.75pt) node[below] {$\omega_2$};

\draw (3,2) -- (7.5,6.5);
\draw (3,2) -- (3,6.5);
\draw (3,0) -- (3,-3.5);
\draw (3,0) -- (-0.5,-3.5);
\fill (3,2) circle (3.75pt) node[below] {$\omega_3$};

\draw (4,0) -- (10.5,6.5);
\draw (4,0) -- (4,6.5);
\draw (4,-2) -- (4,-3.5);
\draw (4,-2) -- (2.5,-3.5);
\fill (4,0) circle (3.75pt) node[below] {$\omega_4$};

\draw (7,2) -- (11.5,6.5);
\draw (7,2) -- (7,6.5);
\draw (7,0) -- (7,-3.5);
\draw (7,0) -- (3.5,-3.5);
\fill (7,2) circle (3.75pt) node[below] {$\omega_5$};
\end{tikzpicture}
\end{center}
\caption{A ramp of length 5.}\label{ramp figure}
\end{figure}
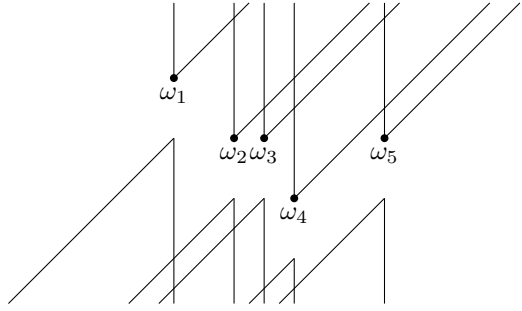

Now we prove a change of basis for a free module $\Gamma$ supporting a nonzero map only to the bottom cone of $\Sigma^{p,q}\Mt = \Mtb{\nu}$.  After the change of basis, there will be a ramp of generators supporting nonzero maps to the bottom cone.  An example is depicted in Figure \ref{bottom cone figure}.  Again, the image of $d$ is determined on the generators, so only these arrows are shown in Figure \ref{bottom cone figure}.  However, $d$ is nonzero in (finitely many) other bidegrees.
For example, if $d(\omega_1) = \frac{\theta}{\rho^2}\nu$ then also $d(\rho\, \omega_1) = \frac{\theta}{\rho}\nu$ and $d(\rho^2 \omega_1) = \theta\nu$.  As any product of elements in $\Mt^-$ is zero, $d$ is zero on $\Mt^-\langle \omega_1 \rangle$.

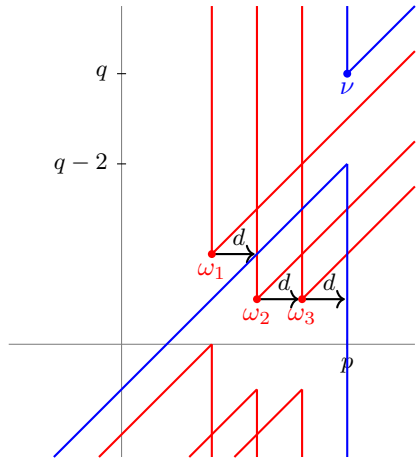
\begin{figure}[ht]
\begin{center}
\begin{tikzpicture}[scale=0.6]
\draw[gray] (-2.5,0) -- (6.5,0);
\draw[gray] (0,-2.5) -- (0,7.5);

\draw (5,0.1) -- (5,-0.1) node[below] {\small $p$};
\draw (0.1,6) -- (-0.1,6) node[left] {\small $q$};
\draw (0.1,4) -- (-0.1,4) node[left] {\small $q-2$};

\draw[->,thick] (2,2) -- (2.95,2);
\draw (2.6,2) node[above] {\small $d$};
\draw[->,thick] (3,1) -- (3.93,1);
\draw (3.6,1) node[above] {\small $d$};
\draw[->,thick] (4,1) -- (4.95,1);
\draw (4.6,1) node[above] {\small $d$};

\draw[thick, red] (2,2) -- (6.5,6.5);
\draw[thick, red] (2,2) -- (2,7.5);
\draw[thick, red] (2,0) -- (2,-2.5);
\draw[thick, red] (2,0) -- (-0.5,-2.5);
\draw[red] (2,2) node[below] {$\omega_1$};
\fill[red] (2,2) circle(2.5pt);

\draw[thick, red] (3,1) -- (6.5,4.5);
\draw[thick, red] (3,1) -- (3,7.5);
\draw[thick, red] (3,-1) -- (3,-2.5);
\draw[thick, red] (3,-1) -- (1.5,-2.5);
\draw[red] (3,1) node[below] {$\omega_2$};
\fill[red] (3,1) circle(2.5pt);

\draw[thick, red] (4,1) -- (6.5,3.5);
\draw[thick, red] (4,1) -- (4,7.5);
\draw[thick, red] (4,-1) -- (4,-2.5);
\draw[thick, red] (4,-1) -- (2.5,-2.5);
\draw[red] (4,1) node[below] {$\omega_3$};
\fill[red] (4,1) circle(2.5pt);

\draw[thick, blue] (5,6) -- (6.5,7.5);
\draw[thick, blue] (5,6) -- (5,7.5);
\draw[thick, blue] (5,4) -- (5,-2.5);
\draw[thick, blue] (5,4) -- (-1.5,-2.5);
\draw[blue] (5,6) node[below] {$\nu$};
\fill[blue] (5,6) circle(2.5pt);

\end{tikzpicture}\hfill
\end{center}
\caption{A ramp of generators mapping to the bottom cone of $\Mtb{\nu}$.}
\label{bottom cone figure}
\end{figure}

\begin{lemma}\label{bottom cone lemma}
Consider a nonzero graded $\Mt$-module homomorphism with bidegree $(1,0)$ of the form $\Gamma = \Mtb{ \gamma_0, \gamma_1, \dots, \gamma_{m}} \xrightarrow{d} \Mtb{\nu}$ where $|\nu| = (p,q)$.  Assume for all $i$ that $\Top(\gamma_i) \leq p$ and whenever $\Top(\gamma_i) = p$ that $\wt(\gamma_i) \leq q$.

Suppose $\im d$ takes nonzero values only in $\Mt^-\langle \nu \rangle$. Then there is a change of basis for $\Gamma$ so that
\[
\Gamma \cong \Mtb{ \omega_1, \dots, \omega_n, \chi_{n+1}, \dots, \chi_{m+1} },
\]
where
	\begin{itemize}
	\item $d(\omega_i)$ is nonzero for all $i$,
	\item $d(\chi_i) = 0$ for all $i$,
	\item $\wt(\omega_i) \leq q-2$ for all $i$, and
	\item $\omega_1, \dots, \omega_n$ forms a ramp.
	\end{itemize}
	That is, $\omega_1, \dots, \omega_n$ are the only basis elements with nonzero images and they have increasing topological dimension and increasing fixed-set dimension.
\end{lemma}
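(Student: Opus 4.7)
The plan is to mimic the strategy of Lemma \ref{top cone lemma} by identifying a canonical subset of generators whose images under $d$ are ``irreducible'' (cannot be obtained from each other by multiplication by elements of $\Mt^+$), showing that this subset forms a ramp, and using them to cancel the contributions of the remaining generators. First I would unpack the image structure: since $\mathrm{im}(d) \subseteq \Mt^-\langle \nu \rangle$, any $\gamma_i$ with $d(\gamma_i)\neq 0$ has $d(\gamma_i) = \frac{\theta}{\rho^{j_i}\tau^{k_i}}\nu$ with $j_i = p-1-\Top(\gamma_i) \geq 0$ and $k_i = \fix(\gamma_i) - \fix(\nu) - 1 \geq 0$ by matching bidegrees; this immediately gives $\wt(\gamma_i) \leq q-2$ for every $\gamma_i$ in the set $S := \{\gamma_i : d(\gamma_i) \neq 0\}$, which is the weight bound in the conclusion.

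The combinatorial core is a partial order on $S$ defined by $\gamma_i \preceq \gamma_j$ iff $\Top(\gamma_i) \leq \Top(\gamma_j)$ and $\fix(\gamma_i) \geq \fix(\gamma_j)$. A direct calculation with the formulas above shows this is equivalent to the existence of a monomial $M_{i,j} = \rho^{\Top(\gamma_j)-\Top(\gamma_i)}\tau^{\fix(\gamma_i)-\fix(\gamma_j)} \in \Mt^+$ satisfying $d(\gamma_j) = M_{i,j}\cdot d(\gamma_i)$. Let $\omega_1,\ldots,\omega_n$ be the minimal elements of $(S,\preceq)$, ordered so that $\Top(\omega_1) < \cdots < \Top(\omega_n)$. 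Since any two elements of $S$ sharing the same $\Top$ are automatically comparable under $\preceq$ (the $\fix$-values are totally ordered on any $\Top$-fiber), distinct minima have distinct topological dimensions; the antichain property then forces $\fix(\omega_i)$ to be strictly increasing alongside $\Top(\omega_i)$, which is precisely the ramp condition of Definition \ref{rampdef}.

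The change of basis is then explicit. For each $\gamma_i$ not chosen as one of the $\omega_k$: if $d(\gamma_i) = 0$, set $\chi_i = \gamma_i$; otherwise $\gamma_i$ is non-minimal in $S$, so pick some $\omega_k \preceq \gamma_i$ and define $\chi_i = \gamma_i + M_{k,i}\,\omega_k$. Then $d(\chi_i) = 0$ over $\F_2$ by construction, and $\chi_i$ is homogeneous of the same bidegree as $\gamma_i$. To verify that $\{\omega_1,\ldots,\omega_n,\chi_{n+1},\ldots,\chi_{m+1}\}$ is a basis for $\Gamma$, I would either note that after an appropriate permutation the transition matrix from $\{\gamma_0,\ldots,\gamma_m\}$ is the identity plus a strictly off-diagonal correction, hence unit, or run the short exact sequence argument with self-injectivity of $\Mt$ exactly as in the proof of Lemma \ref{top cone lemma}.

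The main obstacle I expect is the combinatorial claim that the minimal elements of $(S,\preceq)$ form a ramp; this is essentially a two-line observation but it is the reason the ``stairstep'' pattern appears in the conclusion. Everything else is bookkeeping: making sure each $\chi_i$ lies in the correct bidegree, confirming that the choice of minimal witness $\omega_k$ for each non-minimal $\gamma_i$ does not affect the conclusion (any choice yields a legitimate change of basis), and verifying invertibility of the resulting change-of-basis matrix.
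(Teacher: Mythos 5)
Your proposal is essentially the paper's argument in a cleaner package. The paper proceeds iteratively: whenever two generators $\gamma_a,\gamma_b$ with nonzero image fail the ramp condition, so that $\gamma_b$ lies in the upper-cone region of $\gamma_a$, it replaces $\gamma_b$ by $\chi_b = \gamma_b + \rho^{j_a-j_b}\tau^{k_a-k_b}\gamma_a$ and repeats until the surviving generators form a ramp; your monomial $M_{k,i}$ is exactly this coefficient (your formulas $j_i = p-1-\Top(\gamma_i)$, $k_i = \fix(\gamma_i)-\fix(\nu)-1$ are correct and also give the $\wt \leq q-2$ bound), so your one-pass cancellation against the minimal elements of the dominance order $\preceq$ does in a single step what the paper does by repeated reduction, and your antichain observation is the same reason the paper's terminal configuration is a ramp. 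One small caveat: $\preceq$ as you define it is only a preorder, since two distinct basis elements of $S$ may share a bidegree; in that case both are ``minimal,'' they have equal $\Top$, and your claim that distinct minima have distinct topological dimensions fails, leaving an uncancelled duplicate. The paper's phrasing (``one lies inside the range of the upper cone of the other,'' with boundary allowed) covers this case automatically, and your argument is repaired by the one-line fix of choosing a single representative per repeated bidegree and cancelling the others against it with $M = 1$. The invertibility discussion (unitriangular transition matrix, or the short exact sequence plus self-injectivity of $\Mt$ as in the top cone lemma) matches the paper's handling of independence.
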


\begin{proof}
As in the proof of Lemma \ref{top cone lemma}, we partition a basis for $\Gamma$, reordering if necessary.  Now, by assumption, there are no nonzero images in $\Mt^+\langle \nu \rangle$.  Partition $\Gamma$ as
\[
\{\gamma_0, \dots, \gamma_s \} \cup \{\gamma_{s+1}, \dots, \gamma_{s+t} \},
\]
where the basis elements $\gamma_{0}, \dots, \gamma_{s}$ have nonzero images in the bottom cone $\Mt^-\langle \nu \rangle$, and $d$ is zero on $\gamma_{s+1}, \dots, \gamma_{s+t}$.
We allow for the possibility that $t=0$ if there are no basis elements mapped to zero. Set
\begin{align*}
	\Delta^- &=  \Mtb{\gamma_{0}, \dots, \gamma_{s}}\\
	\Delta^0 &= \Mtb{\gamma_{s+1}, \dots, \gamma_{s+t}}
\end{align*}
so that $\Gamma \cong \Delta^- \oplus \Delta^0$.

We will find a new basis $\omega_1, \dots, \omega_n$ forming a ramp and supporting nonzero maps to the bottom cone.  Consider any two distinct basis elements $\gamma_a$ and $\gamma_b$, $0 \leq a,b \leq s$.  Since $\gamma_a$ and $\gamma_b$ support nonzero images in the bottom cone of $\nu$, we can write $d(\gamma_a) = \frac{\theta}{\rho^{j_a} \tau^{k_a}} \nu$ and $d(\gamma_b) = \frac{\theta}{\rho^{j_b} \tau^{k_b}} \nu$.
If $\gamma_a$ and $\gamma_b$ do not satisfy the ramp condition, then one lies inside the range of the upper cone of the other, as shown in Figure \ref{basis lower}.  Without loss of generality, we can assume $\gamma_b$ lies inside the upper cone of $\gamma_a$ so that $\Top(\gamma_a) \leq \Top(\gamma_b)$ and $\fix(\gamma_a) \geq \fix(\gamma_b)$.

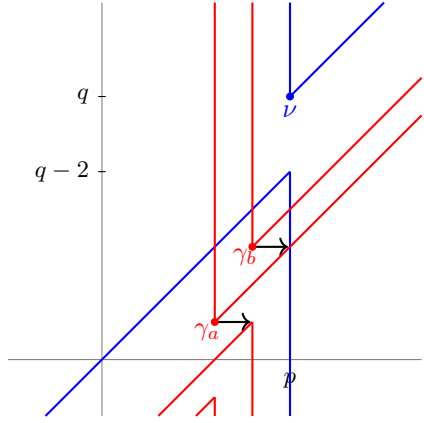
\begin{figure}[ht]
\begin{center}
\begin{tikzpicture}[scale=0.5]
\draw[gray] (-2.5,-1) -- (8.5,-1);
\draw[gray] (0,-2.5) -- (0,8.5);

\draw (5,-0.9) -- (5,-1.1) node[below] {\small $p$};
\draw (0.1,6) -- (-0.1,6) node[left] {\small $q$};
\draw (0.1,4) -- (-0.1,4) node[left] {\small $q-2$};

\draw[->,thick] (4,2) -- (4.95,2);
\draw[->,thick] (3,0) -- (3.95,0);

\draw[thick, blue] (5,6) -- (7.5,8.5);
\draw[thick, blue] (5,6) -- (5,8.5);
\draw[thick, blue] (5,4) -- (5,-2.5);
\draw[thick, blue] (5,4) -- (-1.5,-2.5);
\draw[blue] (5,6) node[below] {$\nu$};
\fill[blue] (5,6) circle(3pt);

\draw[thick, red] (3,0) -- (8.5,5.5);
\draw[thick, red] (3,0) -- (3,8.5);
\draw[thick, red] (3,-2) -- (3,-2.5);
\draw[thick, red] (3,-2) -- (2.5,-2.5);
\draw[red] (2.8,0.2) node[below] {$\gamma_a$};
\fill[red] (3,0) circle(3pt);

\draw[thick, red] (4,2) -- (8.5,6.5);
\draw[thick, red] (4,2) -- (4,8.5);
\draw[thick, red] (4,0) -- (4,-2.5);
\draw[thick, red] (4,0) -- (1.5,-2.5);
\draw[red] (3.8,2.2) node[below] {$\gamma_b$};
\fill[red] (4,2) circle(3pt);

\end{tikzpicture}
\end{center}
\caption{Nonzero images in the lower cone.}\label{basis lower}
\end{figure}

Multiplication by $\rho$ preserves fixed-set dimension but increases topological dimension by one.  Multiplication by $\tau$ preserves topological dimension but decreases fixed-set dimension by one.  Thus $j_a \geq j_b$ and $k_a \geq k_b$.  In the basis for $\Delta^-$ replace $\gamma_b$ with $\chi_b := \rho^{j_a - j_b} \tau^{k_a - k_b} \gamma_a + \gamma_b$.  Once more $\chi_b$ is defined to be homogeneous in the same bidegree as $\gamma_b$ and $d(\chi_b) = 0$.
As in the proof of Lemma \ref{top cone lemma}, the independence of $\gamma_a$ and $\gamma_b$ will imply that $\gamma_a$ and $\chi_b$ are independent as well.
So $\Mtb{\gamma_a,\gamma_b} \cong \Mtb{\gamma_a, \chi_b}$.

Continue to reduce the set of $\gamma_i$ supporting nonzero images in the bottom cone in this way until no $\gamma_b$ lies in the upper cone region of any $\gamma_a$.  The remaining $\gamma_i$ with $d(\gamma_i) \neq 0$ satisfy the ramp condition up to reindexing.  Let $n$ be the number of such $\gamma_i$ with nonzero images remaining, reindex these to have increasing topological dimension and rename them $\omega_i := \gamma_i$ for $1\le i\le n$.  Then reindexing the $\chi_i$ as necessary, we now have a basis for $\Delta^-$ of the form $\omega_1, \dots, \omega_n, \chi_{n+1}, \dots, \chi_{s}$.

Finally for $\Delta^0 = \Mtb{\gamma_{s+1}, \dots, \gamma_{s+t}}$, no real change of basis is required as $d$ is already zero on each basis element. Define $\chi_i := \gamma_i$ for $s+1 \leq i \leq s+t$.  Taking the union of the new bases, we have a basis for $\Gamma \cong \Delta^- \oplus \Delta^0$ of the form $\omega_1, \dots, \omega_n, \chi_{n+1}, \dots, \chi_{m+1}$ as desired.
\end{proof}

\section{Change of basis for attaching a representation cell}\label{basis for attaching}

In the previous section, we introduced two algebraic changes of basis motivated by Kronholm's argument.  In this section we will deduce some consequences for the cohomology of a $\Rep(C_2)$-complex as an $\Mt$-module, completing the proof of Lemma 3.1 from \cite{K}.

Here we show that when attaching a single cell to a $\Rep(C_2)$-complex, up to a change of basis, a nonzero differential in the long exact sequence takes one of only two forms.  The first looks much like the result of Lemma \ref{top cone lemma}, where a single basis element $\lambda$ supports a nonzero differential to the top cone.  In the topological setting, we obtain a further restriction on the weight of $\lambda$ using $\rho$-localization.  The second form follows directly from Lemma \ref{bottom cone lemma}, where basis elements satisfying the ramp condition (see Definition \ref{rampdef}) support nonzero differentials to the bottom cone.

To begin, recall the following lemma and subsequent remark that appear in \cite[Lemma 4.3 and Remark 4.4]{CM} relating the $\rho$-localization of equivariant cohomology to the singular cohomology of the fixed set.

\begin{lemma}\label{rho localization}
($\rho$-localization) Let $X$ be a finite $C_2$-CW complex.  Then
\[
\rho^{-1}H^{*,*}(X) \cong \rho^{-1}H^{*,*}(X^{C_2}) \cong H^{*}_{\sing}(X^{C_2}) \otimes_{\F_2} \rho^{-1}\Mt.
\]
\end{lemma}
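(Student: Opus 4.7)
The plan is to prove the two isomorphisms separately. For the first, $\rho^{-1}H^{*,*}(X) \cong \rho^{-1}H^{*,*}(X^{C_2})$, I would use the cofiber sequence
\[
(X^{C_2})_+ \hookrightarrow X_+ \longrightarrow X/X^{C_2}
\]
coming from the inclusion of the fixed set (which is a subcomplex of $X$, since every cell of a $C_2$-CW complex is either fully fixed or freely permuted). This yields a long exact sequence of $\Mt$-modules in $RO(C_2)$-graded cohomology, and because $\rho$-localization is an exact functor on the category of $\Mt$-modules, localizing preserves exactness. So it suffices to show $\rho^{-1}\tilde{H}^{*,*}(X/X^{C_2}) = 0$.

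To establish this vanishing, let $Y := X/X^{C_2}$, which is a pointed finite $C_2$-CW complex whose only fixed point is the basepoint. The plan is to induct on the number of cells of $Y$, all of which (above the basepoint) have the free form $(C_2)_+ \wedge D^n$. Each filtration quotient is a wedge of spaces $(C_2)_+ \wedge S^n$, whose reduced cohomology is a shifted copy of
\[
\tilde{H}^{*,*}((C_2)_+) \cong \A_0 \cong \F_2[\tau,\tau^{-1},\rho]/(\rho).
\]
Since $\rho$ acts as zero on $\A_0$, the $\rho$-localized cohomology of each filtration quotient vanishes. Applying $\rho^{-1}$ to the long exact sequence for each cell attachment and invoking the inductive hypothesis delivers $\rho^{-1}\tilde{H}^{*,*}(Y) = 0$, as needed. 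The main obstacle is the base case identification $\tilde{H}^{*,*}((C_2)_+) \cong \A_0$ with $\rho = 0$; once this algebraic fact is in hand, the inductive step is routine cofiber-sequence bookkeeping.

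For the second isomorphism $\rho^{-1}H^{*,*}(X^{C_2}) \cong H^{*}_{\sing}(X^{C_2}) \otimes_{\F_2} \rho^{-1}\Mt$, I would first establish the unlocalized analog: for any finite CW complex $Z$ with trivial $C_2$-action,
\[
H^{*,*}(Z) \cong H^{*}_{\sing}(Z) \otimes_{\F_2} \Mt
\]
as $\Mt$-modules. This follows because the equivariant cohomology of a trivial-action space is just singular cohomology with coefficients in the equivariant cohomology of a point, or alternatively by a standard induction on the trivial-action cells of $Z$ starting from $H^{*,*}(\pt) = \Mt$. Since $\rho^{-1}\Mt \otimes_{\Mt}(-)$ is exact and $\Mt$-linear, tensoring this isomorphism with $\rho^{-1}\Mt$ over $\Mt$ produces the second isomorphism immediately, and chaining with the first completes the proof.
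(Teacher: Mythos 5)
Your proposal is correct, and it is essentially the standard argument: the paper itself does not prove this lemma but imports it from \cite[Lemma 4.3, Remark 4.4]{CM}, where the proof runs along the same lines you describe — the cofiber sequence $(X^{C_2})_+ \to X_+ \to X/X^{C_2}$, the vanishing of $\rho^{-1}$ on the free cells since $\rho$ acts as zero on $\A_0 \cong \tilde{H}^{*,*}((C_2)_+)$, and the identification $H^{*,*}(Z) \cong H^*_{\sing}(Z) \otimes_{\F_2} \Mt$ for trivial-action complexes. The only step worth tightening is your first justification of that last identification (``singular cohomology with coefficients in the cohomology of a point''), which as stated hides an Atiyah--Hirzebruch-type collapse; your alternative cell-by-cell induction handles this cleanly, so there is no gap.
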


\begin{remark}\label{tau-torsion remark}
An important consequence of the previous lemma is that $\rho^{-1}\tilde{H}^{*,*}(X)$ does not have any $\tau$-torsion, since $\rho^{-1} \Mt \cong \F_2[\tau,\rho^\pm]$ and $\rho^{-1}\tilde{H}^{*,*}(X)$ is free as a $\rho^{-1}\Mt$-module.
\end{remark}

We will use this fact to show the long exact sequence for attaching a single cell to a $\Rep(C_2)$-complex cannot have any non-surjective differentials into the top cone.  That is, if there is a nontrivial differential into the top cone as in Lemma \ref{top cone lemma}, so that $d(\lambda)=\tau^k \nu$ for some $k$, then in fact $\wt(\lambda) = \wt(\nu)$, $k=0$, and the restriction $\Mtb{\lambda} \to \Mtb{\nu}$ is an isomorphism.

The following result is quite similar to \cite[Lemma 3.1]{K} although the proof differs slightly.  Our proof uses Lemmas \ref{top cone lemma} and \ref{bottom cone lemma}, together with Remark \ref{tau-torsion remark} to further restrict differentials to the top cone. As in Section \ref{change of basis}, the hypotheses involving topological dimensions and weights are motivated by the inductive step of the main theorem.
Recall from the proofs of Lemmas \ref{top cone lemma} and \ref{bottom cone lemma}, every change of basis we made preserved the bidegrees of the free generators.  As these are the only change of bases used to prove the following result, the topological dimensions and weights of basis elements are preserved.

\begin{lemma} \label{basis update}
Let $B$ be a $\Rep(C_2)$-complex with reduced cohomology given by a graded free $\Mt$-module with basis $\gamma_0, \dots, \gamma_m$ so $\tilde{H}^{*,*}(B) \cong \Mtb{\gamma_0, \dots, \gamma_m}$.  Suppose $X$ is obtained from $B$ by attaching a single $(p,q)$ cell and let $\nu$ denote the generator for the reduced cohomology of $X/B \cong S^{p,q}$.  Assume further that for all $i$, $\Top(\gamma_i) \leq p$ and whenever $\Top(\gamma_i) = p$ then $\wt(\gamma_i) \leq q$.

The cofiber sequence
\[
B \xrightarrow{\iota} X \xrightarrow{\pi} S^{p,q}
\]
gives rise to the differential $d : \tilde{H}^{*,*}(B) \to \tilde{H}^{*+1,*} (S^{p,q})$ in the long exact sequence.  After an appropriate change of basis, one of the following is true:
\begin{enumerate}
\item[$(a)$] the differential $d \equiv 0$;
\item[$(b)$] the differential is zero on every basis element except $\lambda$, where $|\lambda| = (p-1,q)$ and the restriction of the differential to $\Mtb{\lambda} \to \Mtb{\nu}$ is an isomorphism; or
\item[$(c)$] the basis elements supporting nonzero differentials $\omega_1, \dots, \omega_n$ satisfy the ramp condition and map to $\Mt^-\langle \nu \rangle$.
\end{enumerate}
\end{lemma}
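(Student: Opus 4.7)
The plan is to split into cases based on where the image of $d$ lies inside $\Mtb{\nu} = \Mt^+\langle\nu\rangle \oplus \Mt^-\langle\nu\rangle$, apply one of the two algebraic change-of-basis lemmas from Section~\ref{change of basis}, and then use $\rho$-localization as the essential topological input to finish the top-cone case.

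If $d \equiv 0$, the existing basis already gives (a). Otherwise I would ask whether $\im(d)$ meets the top cone $\Mt^+\langle\nu\rangle$. If it does not, then $\im(d) \subseteq \Mt^-\langle\nu\rangle$ and Lemma~\ref{bottom cone lemma} applies directly, producing a basis $\omega_1,\dots,\omega_n,\chi_{n+1},\dots,\chi_{m+1}$ in which only the $\omega_i$ support nonzero differentials, the $\omega_i$ map into $\Mt^-\langle\nu\rangle$, and they satisfy the ramp condition; this is case (c). If instead $\im(d)$ hits the top cone, Lemma~\ref{top cone lemma} produces a basis $\lambda,\chi_1,\dots,\chi_m$ in which only $\lambda$ carries a nonzero differential, with $\Top(\lambda) = p-1$ and $\wt(\lambda) \geq q$. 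For degree reasons $d(\lambda) = \tau^k \nu$ with $k = \wt(\lambda) - q \geq 0$, so case (b) reduces to showing $k = 0$.

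The main obstacle, and the only step that is not purely algebraic, is forcing $k = 0$. For this I would use $\rho$-localization (Lemma~\ref{rho localization}) together with the resulting vanishing of $\tau$-torsion in $\rho^{-1}\tilde{H}^{*,*}(X)$ (Remark~\ref{tau-torsion remark}). The cofiber sequence $B \to X \to S^{p,q}$ yields the short exact sequence
\[
0 \to \cok(d) \to \tilde{H}^{*,*}(X) \to \ker(d) \to 0
\]
of $\Mt$-modules, which remains short exact after inverting $\rho$. Suppose $k \geq 1$ for contradiction. The image of $\rho^{-1}d$ is exactly $(\tau^k\rho^{-1}\Mt)\cdot\nu$, so $\tau^{k-1}\nu$ represents a nonzero class in $\rho^{-1}\cok(d)$ satisfying $\tau \cdot \tau^{k-1}\nu = \tau^k \nu = \rho^{-1}d(\lambda) = 0$. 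Along the injection $\rho^{-1}\cok(d) \hookrightarrow \rho^{-1}\tilde{H}^{*,*}(X)$ this produces a nonzero $\tau$-torsion element, contradicting Remark~\ref{tau-torsion remark}. Hence $k = 0$, so $\wt(\lambda) = q$ and the restriction of $d$ to $\Mtb{\lambda}$ is an isomorphism onto $\Mtb{\nu}$, giving case (b).
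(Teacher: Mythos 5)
Your proposal is correct and follows essentially the same route as the paper: split on whether $\im(d)$ meets $\Mt^+\langle\nu\rangle$, invoke Lemma \ref{top cone lemma} or Lemma \ref{bottom cone lemma}, and in the top-cone case rule out $k \geq 1$ by $\rho$-localization and the absence of $\tau$-torsion from Remark \ref{tau-torsion remark}. The only cosmetic difference is that you locate the offending $\tau$-torsion class in $\rho^{-1}\cok(d)$ via the localized short exact sequence, whereas the paper works directly with the nonzero class $\pi^*(\tau^{k-1}\nu)$ in $\tilde{H}^{*,*}(X)$; both arguments are sound and yield the same contradiction.
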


\begin{proof}

If $d \equiv 0$, we are done.  Otherwise, $d$ either has nonzero image in $\Mt^+\langle\nu\rangle$ or not.  If $d$ does have nonzero image in $\Mt^+\langle\nu\rangle$, then Lemma \ref{top cone lemma} shows we can rewrite $\tilde{H}^{*,*}(B) \cong \Mtb{\lambda, \chi_1, \dots, \chi_{m}}$ where $\Top(\lambda) = p-1$, $\wt(\lambda) \geq q$, and $d(\chi_i) = 0$ for all $i$.
If $\wt(\lambda) = q$, then the restriction of $d$ to $\Mtb{\lambda} \to \Mtb{\nu}$ is an isomorphism.  For case $(b)$, it remains to show this is the only possible weight.  Suppose instead that $\wt(\lambda) > q$ so that $d(\lambda) = \tau^k \nu$ for some $k > 0$.  Then the following equation holds.\footnote{Here we deviate from Kronholm, as there is a small error in the calculation of $\cok(d)$ and $\ker(d)$ in \cite{K}.}
\[
d\left(\frac{\theta}{\tau^k}\lambda \right) = \frac{\theta}{\tau^k}d(\lambda) = \frac{\theta}{\tau^k}\tau^k \nu = \theta \nu
\]
A portion of $\cok(d)$ and $\ker(d$) are depicted in Figure \ref{diff upper}.  (All of the $\chi_i$ are also in $\ker(d)$ but are not depicted here.)

\begin{figure}[ht]
\begin{center}
\begin{tikzpicture}[scale=0.6]
\draw[gray] (-3.5,0) -- (5.5,0);
\draw[gray] (0,-2.5) -- (0,6.5);

\draw (2,0.1) -- (2,-0.1) node[below] {\small $p$};
\draw (0.1,2) -- (-0.1,2) node[left] {\small $q$};

\draw[->,thick] (1,4) -- (1.95,4);
\draw (1.6,4) node[above] {\small $d$};

\draw[thick, red] (1,4) -- (3.5,6.5);
\draw[thick, red] (1,4) -- (1,6.5);
\draw[thick, red] (1,2) -- (1,-2.5);
\draw[thick, red] (1,2) -- (-3.5,-2.5);
\draw[red] (1,4) node[below] {$\lambda$};
\fill[red] (1,4) circle(2.5pt);

\draw[thick, blue] (2,2) -- (5.5,5.5);
\draw[thick, blue] (2,2) -- (2,6.5);
\draw[thick, blue] (2,0) -- (2,-2.5);
\draw[thick, blue] (2,0) -- (-0.5,-2.5);
\draw[blue] (2,2) node[below] {$\nu$};
\fill[blue] (2,2) circle(2.5pt);

\end{tikzpicture} \hfill
\begin{tikzpicture}[scale=0.6]
\draw[gray] (-3.5,0) -- (5.5,0);
\draw[gray] (0,-2.5) -- (0,6.5);

\draw (2,0.1) -- (2,-0.1) node[below] {\small $p$};
\draw (0.1,2) -- (-0.1,2) node[left] {\small $q$};

\draw[thick, red] (1,2) -- (1,1) -- (-2.5,-2.5);
\draw[thick, red] (1,2) -- (-3.5,-2.5);
\draw[red] (-1.25,1) node {\small $\ker(d)$};

\draw[thick, blue] (2,2) -- (5.5,5.5);
\draw[thick, blue] (2,2) -- (2,3) -- (5.5,6.5);
\draw[blue] (4,3) node {\small $\cok(d)$};
\end{tikzpicture}\hfill
\end{center}
\caption{Differential to the upper cone. Note $d$ surjects onto $\Mt^-\langle \nu\rangle$.}\label{diff upper}
\end{figure}

To actually compute $\tilde{H}^{*,*}(X)$ would require solving an extension problem of graded $\Mt$-modules as $\tilde{H}^{*,*}(X)$ is in the middle of the short exact sequence
\[
0 \to \cok(d) \to \tilde{H}^{*,*}(X) \to \ker(d)  \to 0.
\]
However, we do not need to solve this extension problem because the $\rho$-localization of $\tilde{H}^{*,*}(X)$ is actually independent of the resolution.  By Remark \ref{tau-torsion remark}, we know $\rho^{-1}\tilde{H}^{*,*}(X)$ cannot have any $\tau$-torsion.
Since $\tau^{k-1}\nu \notin \im(d)$, by exactness we have $\tau^{k-1}\nu \notin \ker(\pi^*)$ and hence a nonzero class $\pi^*(\tau^{k-1}\nu) \in \tilde{H}^{*,*}(X)$.
This class has no $\rho$-torsion and thus survives $\rho$-localization.\footnote{Without the assumption that for all $i$ $\Top(\gamma_i) \leq p$ and if $\Top(\gamma_i) = p$ then $\wt(\gamma_i) \leq q$, it is possible to have differentials that introduce $\rho$-torsion in $\tilde{H}^{*,*}(X)$.
See Example 6.6 in \cite{CM}.}  Yet it does have $\tau$-torsion since $\tau \cdot \pi^*(\tau^{k-1}\nu) = \pi^*(\tau^k \nu) = 0$,
contradicting Lemma \ref{rho localization}.  Thus the basis given by Lemma \ref{top cone lemma} reduces to case $(b)$.

Finally, the remaining possibility is that $d$ is nonzero but does not have nonzero image in $\Mt^+\langle\nu\rangle$.  Then the assumptions of Lemma \ref{bottom cone lemma} are met and, after a change of basis for $\tilde{H}^{*,*}(B)$, a ramp of basis elements $\omega_1, \dots, \omega_n$ support nonzero differentials to $\Mt^-\langle\nu\rangle$, as stated in case $(c)$.
\end{proof}

\section{Freeness theorem}\label{main section}

We are now ready to prove Kronholm's freeness theorem for finite $\Rep(C_2)$-complexes.  The proof will proceed by induction on the number of representation cells and consider the attaching map for a single cell.  The bulk of the proof will involve case $(c)$ of Lemma \ref{basis update}, with a ramp (see Definition \ref{rampdef}) of $\Mt$ generators supporting differentials to the lower cone of another generator.  This will lead to the cohomology being a free module with the same number of $\Mt$ generators, but in shifted bidegrees from their original positions.

In this main case, we use $\tau$-localization to show that the cohomology is free.  However, we will need to show the inductive hypothesis holds, namely that any generators in the highest topological dimension are below a certain weight.  For that, we use $\tau$-localization together with $\rho$-localization.  Despite the generators appearing in new bidegrees, the set of topological dimensions of the generators as well as the set of fixed-set dimensions are preserved.  This will give us the constraint on the weights required to complete the inductive step.

Somewhat surprisingly, in this main case we prove the cohomology is free without ever identifying the free generators or the shifts.  For the purpose of computations, a choice of free basis and a precise formula for calculating the Kronholm shifts are given in Section \ref{shifts}.

\begin{thm} \label{main thm}
(Freeness theorem) If $X$ is a finite $\Rep(C_2)$-complex then $\tilde{H}^{*,*}(X;\underline{\F_2})$ is free as a graded $\Mt$-module, where $\Mt = H^{*,*}(\pt;\underline{\F_2})$.
\end{thm}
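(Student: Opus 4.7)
The plan is to induct on the number of representation cells of $X$. Order the cells lexicographically by $(p,q)$, so that when a $(p,q)$-cell is attached to a subcomplex $B$, every previously attached cell has strictly smaller topological dimension, or the same topological dimension with weight at most $q$. The inductive hypothesis is that $\tilde{H}^{*,*}(B)$ is free with basis $\gamma_0,\dots,\gamma_m$ satisfying $\Top(\gamma_i) \le p$ and, whenever $\Top(\gamma_i) = p$, $\wt(\gamma_i) \le q$, so that Lemma \ref{basis update} applies to the cofiber sequence $B \hookrightarrow X \to S^{p,q}$. The base case is the point, whose reduced cohomology is zero.

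In case $(a)$ of Lemma \ref{basis update} the differential vanishes and $\tilde{H}^{*,*}(X) \cong \tilde{H}^{*,*}(B) \oplus \Mtb{\nu}$ is free. In case $(b)$ the differential restricts to an isomorphism $\Mtb{\lambda} \xrightarrow{\cong} \Mtb{\nu}$ and is zero on all other basis elements, so the long exact sequence collapses and $\tilde{H}^{*,*}(X) \cong \Mtb{\chi_1,\dots,\chi_m}$ is again free. In both cases the bidegree hypothesis for the next cell follows directly from the ordering.

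The work is in case $(c)$, where a ramp $\omega_1,\dots,\omega_n$ maps nontrivially into the bottom cone $\Mt^-\langle\nu\rangle$. The crucial observation is that every element of $\Mt^-$ is $\tau$-torsion, so $\tau$-localizing the long exact sequence annihilates the differential and yields
\[
\tau^{-1}\tilde{H}^{*,*}(X) \;\cong\; \tau^{-1}\tilde{H}^{*,*}(B) \;\oplus\; \tau^{-1}\Mtb{\nu}.
\]
By induction $\tilde{H}^{*,*}(B)$ is free, so $\tau^{-1}\tilde{H}^{*,*}(B)$ is a direct sum of shifted copies of $\A_\infty \cong \F_2[\tau^{\pm 1},\rho]$ and in particular has no $\rho$-torsion; the same holds for $\tau^{-1}\Mtb{\nu}$. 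Hence $\tau^{-1}\tilde{H}^{*,*}(X)$ has no $\rho$-torsion, and Remark \ref{rho-torsion remark}, via the structure theorem, forces $\tilde{H}^{*,*}(X)$ to be free.

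The hard part is closing the induction. With freeness of $\tilde{H}^{*,*}(X)$ in hand, the multiset of topological dimensions of the new free basis can be read off from $\tau^{-1}\tilde{H}^{*,*}(X)$ as $\{\Top(\gamma_i)\} \cup \{p\}$, and Lemma \ref{rho localization} gives the multiset of fixed-set dimensions as $\{\fix(\gamma_i)\} \cup \{p-q\}$. These do not by themselves pin down the weights of individual new generators at topological dimension $p$, and the main obstacle is ensuring that after the ramp absorbs $\nu$, any new generator at topological dimension $p$ still has weight at most $q$ so the inductive hypothesis is preserved for the next cell. This weight bookkeeping is the content of Section \ref{shifts}; once it is invoked, the induction closes.
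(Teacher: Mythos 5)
Your argument tracks the paper's proof through cases $(a)$, $(b)$, and the freeness claim in case $(c)$ (ordering the cells, Lemma \ref{basis update}, then killing the differential by $\tau$-localization and appealing to the structure theorem via Remark \ref{rho-torsion remark}); all of that is fine. The gap is in how you close the induction in case $(c)$. Invoking Section \ref{shifts} is circular: Theorem \ref{formula} is proved \emph{from} the proof of Theorem \ref{main thm} --- its argument explicitly recycles the facts that $\tilde{H}^{*,*}(X_{k+1})$ is free, that topological and fixed-set dimensions of generators are preserved, the splitting off of the lifts $b_i$, and the very statement that the generator corresponding to $\nu$ has fixed-set dimension at least $p-q$ --- so it cannot be used to supply the weight bound inside the induction. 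Moreover, the two multisets you extract (topological dimensions $\{\Top(\gamma_i)\}\cup\{p\}$ from $\tau$-localization and fixed-set dimensions $\{\fix(\gamma_i)\}\cup\{p-q\}$ from Lemma \ref{rho localization}) genuinely do not suffice on their own: they do not tell you how the two lists are paired, and some $\chi_i$ may have fixed-set dimension strictly less than $p-q$ (e.g.\ a $\chi$ in bidegree $(p-1,q)$), so a priori a new generator in topological dimension $p$ could be paired with such a value and land in weight greater than $q$, breaking the hypothesis needed to apply Lemma \ref{basis update} at the next cell.

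What is needed (and what the paper does) is a finer localization argument that isolates the interesting part of the module. Lift each $\chi_i$ to $b_i \in \tilde{H}^{*,*}(X_{k+1})$; since $\theta b_i \neq 0$, Lemma \ref{theta lemma} and the self-injectivity of $\Mt$ (Lemma \ref{injectivity lemma}) split off $\Mtb{b_{n+1},\dots,b_{m+1}}$, whose generators sit in the original bidegrees of the $\chi_i$, with free quotient $N$. Then $\rho$-localize the long exact sequence together with the isomorphism $\bigoplus\Mtb{b_i}\cong\bigoplus\Mtb{\chi_i}$ to get a diagram of split short exact sequences identifying $\rho^{-1}N$ with $\rho^{-1}\tilde{H}^{*,*}(S^{p,q}) \oplus \bigoplus_i \rho^{-1}\Mtb{\omega_i}$. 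Since each $\omega_i$ maps into $\Mt^{-}\langle\nu\rangle$, every $\fix(\omega_i)$ exceeds $p-q$, so every generator of $N$ has fixed-set dimension at least $p-q$; hence any generator of $\tilde{H}^{*,*}(X_{k+1})$ in topological dimension $p$ has weight at most $q$, and the induction closes. Without this step (or an equivalent replacement), your proof is incomplete at exactly the point where Kronholm's original argument also breaks down.
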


\begin{proof}
Attaching one cell at a time, we can filter $X$ so that each $X_{k+1}$ is formed from $X_k$ by attaching a single $\Rep(C_2)$-cell, $e_{k+1}$.  Order the representation cells $e_1, e_2, \dots, e_{K}$ by increasing topological dimension and increasing weight so that if $e_i \cong D(\R^{p_i, q_i})$ then $p_i \leq p_{i+1}$ for all $i$, and if $p_i = p_{i+1}$ then $q_i \leq q_{i+1}$.  We proceed by induction on the spaces in the `one-at-a-time' cellular filtration\footnote{Although we filter $X$ using the `one-at-a-time' cellular filtration as in Kronholm's proof \cite{K}, we prove the theorem using the long exact sequence associated to a cofiber sequence with no reference to the cellular spectral sequence.  This is addressed in more detail in Section \ref{Kronholm proof}.}
\[
\pt = X_0 \subseteq X_1 \subseteq \cdots \subseteq X_k \subseteq X_{k+1} \subseteq \cdots \subseteq X_K = X.
\]
The base case is trivial.

We will inductively prove that each $\tilde{H}^{*,*}(X_k)$ is a free $\Mt$-module, where all the generators have topological dimensions between zero and $p_k$ and any generators in topological dimension $p_k$ have weights between zero and $q_k$.
For the inductive step we will form $X_{k+1}$ from $X_k$ by attaching a single $(p_{k+1},q_{k+1})$-cell.  In the course of the inductive step we will see that the generator of $\tilde{H}^{*,*}(X_{k+1})$ corresponding to this new cell may remain in bidegree $(p_{k+1},q_{k+1})$, ``shift" to a lower weight, or vanish.

Now assume the inductive hypothesis holds for $\tilde{H}^{*,*}(X_k)$.  To simplify notation we set $p = p_{k+1}$ and $q = q_{k+1}$.  We will show $\tilde{H}^{*,*}(X_{k+1})$ is free with all generators in topological dimension $p$ having weight at most $q$.

The cofiber sequence
\[
X_k \xrightarrow{\iota} X_{k+1} \xrightarrow{\pi} X_{k+1}/X_k \cong S^{p,q}
\]
induces the long exact sequence
\[
\cdots \xrightarrow{d} \tilde{H}^{*,*}(S^{p,q}) \xrightarrow{\pi^*} \tilde{H}^{*,*}(X_{k+1}) \xrightarrow{\iota^*} \tilde{H}^{*,*}(X_k) \xrightarrow{d} \tilde{H}^{*+1,*}(S^{p,q}) \xrightarrow{\pi^*} \cdots.
\]
Let $\nu$ denote the free generator of $\tilde{H}^{*,*} (X_{k+1}/X_k) \cong \tilde{H}^{*,*}(S^{p,q}) \cong \Sigma^{p,q} \Mt$ in bidegree $(p,q)$.  To show that $\tilde{H}^{*,*} (X_{k+1})$ is a free $\Mt$-module we will solve the extension problem of graded $\Mt$-modules that appears in the short exact sequence
\[
0 \to \cok(d) \to \tilde{H}^{*,*}(X_{k+1}) \to \ker(d) \to 0.
\]
We proceed by investigating the differential $d: \tilde{H}^{*,*}(X_k) \to \tilde{H}^{*+1,*}(S^{p,q})$.

Lemma \ref{basis update} enumerates the possibilities. In case (a), the differential $d \equiv 0$ and the extension problem is easily solved.  The short exact sequence becomes
\[
0 \to \Mtb{\nu} \to \tilde{H}^{*,*}(X_{k+1}) \to \tilde{H}^{*,*}(X_k) \to 0,
\]
which splits since $\tilde{H}^{*,*}(X_k)$ is free by inductive assumption. Hence $\tilde{H}^{*,*}(X_{k+1}) \cong \tilde{H}^{*,*}(X_k) \oplus \Mtb{\nu}$ is free and there are no shifts. That is, the generators of $\tilde{H}^{*,*}(X_{k+1})$ are in the same bidegrees as the generators of $\tilde{H}^{*,*}(X_k)$, with a single new generator in bidegree $(p,q)$.
In particular, since the inductive hypothesis holds for the generators of $\tilde{H}^{*,*}(X_k)$, any free generators of $\tilde{H}^{*,*}(X_{k+1})$ with topological dimension $p$ will have weight not exceeding $q$.  Thus $\tilde{H}^{*,*}(X_{k+1})$ satisfies the inductive hypothesis.

In case $(b)$, the extension problem is again easily solved.  Recall that after the change of basis, $\tilde{H}^{*,*}(X_k) \cong \Mtb{\lambda, \chi_1, \dots, \chi_m}$ with $|\lambda| = (p-1,q)$ and $d(\chi_i) = 0$ for all $i$. As the differential is surjective, $\cok(d) = 0$ and the short exact sequence becomes
\[
0 \to \tilde{H}^{*,*}(X_{k+1}) \to \Mtb{ \chi_1, \dots, \chi_m} \to 0,
\]
so $\tilde{H}^{*,*}(X_{k+1}) \cong \Mtb{ \chi_1, \dots, \chi_m}$ is free.  Here we say $\lambda$ kills $\nu$, making $\tilde{H}^{*,*}(X_{k+1})$ the same as $\tilde{H}^{*,*}(X_{k})$, but with a single copy of $\Mt$ generated in bidegree $(p-1,q)$ removed.
Again, any free generators of $\tilde{H}^{*,*}(X_{k+1})$ with topological dimension $p$ have weight not exceeding $q$, satisfying the inductive hypothesis.

It remains to solve the extension problem arising in case $(c)$, which is significantly more labor-intensive.  Under these circumstances the usual short exact sequence
\[
0 \to \cok(d) \to \tilde{H}^{*,*}(X_{k+1}) \to \ker(d) \to 0
\]
does not split, yet we will still show that $\tilde{H}^{*,*}(X_{k+1})$ is a free $\Mt$-module.

In this case we can write $\tilde{H}^{*,*}(X_{k}) \cong \Mtb{ \omega_1, \dots, \omega_n, \chi_{n+1}, \dots, \chi_{m+1} }$ where $\omega_1, \dots, \omega_n$ satisfy the ramp condition, each $\omega_i$ supports a nonzero differential to the bottom cone $\Mt^- \langle \nu \rangle$, and $d(\chi_i) = 0$ for all $n+1 \leq i \leq m+1$.
The image under $d$ of each $\omega_i$ is
\[
d(\omega_i) = \frac{\theta}{\rho^{j_i} \tau^{k_i}} \nu
\]
for some integers $j_i, k_i \geq 0$.  It follows from the ramp condition that $j_i > j_{i+1}$ and $k_i < k_{i+1}$.  That is, each $\omega_{i+1}$ is to the right of $\omega_i$ and lies on a lower diagonal.  An example of such a differential (with the $\chi_i$ omitted) is pictured in Figure \ref{ramp diff}.

\begin{figure}[ht]
\begin{center}
\begin{tikzpicture}[scale=0.4]
\draw[gray] (-1.5,-1) -- (12.5,-1);
\draw[gray] (0,-2.5) -- (0,12.5);

\draw (11,-0.9) -- (11,-1.1) node[below] {\small $p$};
\draw (0.1,11) -- (-0.1,11) node[left] {\small $q$};
\draw (0.1,9) -- (-0.1,9) node[left] {\small $q-2$};

\draw[->,thick] (3,2) -- (3.95,2);
\draw[->,thick] (5,1) -- (5.9,1);
\draw[->,thick] (6,1) -- (6.95,1);
\draw[->,thick] (8,2) -- (8.95,2);
\draw[->,thick] (9,0) -- (9.95,0);

\draw[thick,blue] (11,11) -- (12.5,12.5);
\draw[thick,blue] (11,11) -- (11,12.5);
\draw[thick,blue] (11,9) -- (11,-2.5);
\draw[thick,blue] (11,9) -- (-0.5,-2.5);
\draw[blue] (11,11) node[below] {\small $\nu$};
\fill[blue] (11,11) circle(3.75pt);

\draw[thick,red] (3,2) -- (12.5,11.5);
\draw[thick,red] (3,2) -- (3,12.5);
\draw[thick,red] (3,0) -- (3,-2.5);
\draw[thick,red] (3,0) -- (0.5,-2.5);
\draw[red] (3,2) node[below] {\small $\omega_1$};
\fill[red] (3,2) circle(3.75pt);

\draw[thick,red] (5,1) -- (12.5,8.5);
\draw[thick,red] (5,1) -- (5,12.5);
\draw[thick,red] (5,-1) -- (5,-2.5);
\draw[thick,red] (5,-1) -- (3.5,-2.5);
\draw[red] (5,1) node[below] {\small $\omega_2$};
\fill[red] (5,1) circle(3.75pt);

\draw[thick,red] (6,1) -- (12.5,7.5);
\draw[thick,red] (6,1) -- (6,12.5);
\draw[thick,red] (6,-1) -- (6,-2.5);
\draw[thick,red] (6,-1) -- (4.5,-2.5);
\draw[red] (6,1) node[below] {\small $\omega_3$};
\fill[red] (6,1) circle(3.75pt);

\draw[thick,red] (8,2) -- (12.5,6.5);
\draw[thick,red] (8,2) -- (8,12.5);
\draw[thick,red] (8,0) -- (8,-2.5);
\draw[thick,red] (8,0) -- (5.5,-2.5);
\draw[red] (8,2) node[below] {\small $\omega_4$};
\fill[red] (8,2) circle(3.75pt);

\draw[thick,red] (9,0) -- (12.5,3.5);
\draw[thick,red] (9,0) -- (9,12.5);
\draw[thick,red] (9,-2) -- (9,-2.5);
\draw[thick,red] (9,-2) -- (8.5,-2.5);
\draw[red] (9,0) node[below] {\small $\omega_5$};
\fill[red] (9,0) circle(3.75pt);

\end{tikzpicture}
\end{center}
\caption{Ramp of differentials ($\chi_i$ omitted and $n=5$).}\label{ramp diff}
\end{figure}
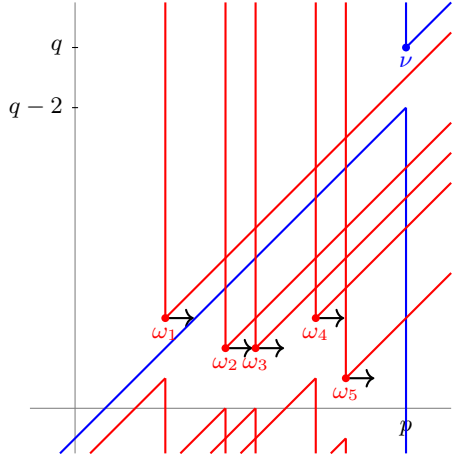

We will use $\tau$-localization to show the solution to the extension problem in this case must be free.  Since localization preserves exactness for finite sequences, we may consider the long exact sequence
\[
\cdots \rightarrow \tau^{-1}\tilde{H}^{*,*}(X_{k+1}) \rightarrow \tau^{-1}\tilde{H}^{*,*}(X_k) \xrightarrow{\tau^{-1}d} \tau^{-1}\tilde{H}^{*+1,*}(S^{p,q}) \rightarrow \cdots.
\]
Notice that $\im(d) \subseteq \Mt^-\langle \nu \rangle$ so $\tau^{-1}d \equiv 0$.  Thus this long exact sequence reduces to a short exact sequence
\[
0 \to \tau^{-1}\tilde{H}^{*,*}(S^{p,q}) \to \tau^{-1}\tilde{H}^{*,*}(X_{k+1}) \to \tau^{-1}\tilde{H}^{*,*}(X_k) \to 0,
\]
which we can rewrite as
\[
0 \to \Sigma^{p,q}\A_\infty \to \tau^{-1}\tilde{H}^{*,*}(X_{k+1}) \to \left(\bigoplus_i \Sigma^{|\omega_i|}\A_\infty\right) \oplus \left(\bigoplus_j \Sigma^{|\chi_j|}\A_\infty\right) \to 0.
\]
As modules over $\tau^{-1}\Mt \cong \A_\infty$, both the left and right terms are free.  So this short exact sequence splits and the middle term is also free.
Notice $X_{k+1}$ is also a finite $C_2$-CW complex so the structure theorem applies.  By Remark \ref{rho-torsion remark}, $\tilde{H}^{*,*}(X_{k+1})$ has no copies of $\A_n$ for any $n$, making it a free $\Mt$-module.

At this point it may seem that we are done, but in fact we still need to verify the inductive hypothesis about the topological dimensions of the generators and that the weight of any generator in dimension $p$ is no more than $q$.  The splitting of the short exact sequence above implies that all the free generators of $\tilde{H}^{*,*}(X_{k+1})$ are in the same topological dimensions as the free generators of $\tilde{H}^{*,*}(S^{p,q})$ and $\tilde{H}^{*,*}(X_{k})$.  This might seem to suggest the generators have not shifted at all, but a shift up or down would not be witnessed by $\tau$-localization.

Knowing the topological dimensions of the generators, we use $\rho$-localization to prove the constraint on the weight of any generators in dimension $p$.  First, we split off the less interesting free generators corresponding to the $\chi_i \in \tilde{H}^{*,*}(X_k)$.  The $\chi_i$ are elements of $\ker(d)$ so we can lift them back to $\tilde{H}^{*,*}(X_{k+1})$.  Since $d(\chi_i)=0$, by exactness we can choose $b_i$ in the preimage $(\iota^*)^{-1}(\chi_i)$, though this choice is not necessarily unique.  It is fairly easy to see that $\theta b_i \neq 0$ for all $i$.
This follows because $\Mtb{\chi_{n+1}, \dots, \chi_{m+1}}$ is a free submodule of $\tilde{H}^{*,*}(X_k)$ so $\theta \chi_i \neq 0$ for each $i$.  Since $\iota^*$ is an $\Mt$-module map, $\iota^*(\theta b_i) = \theta \iota^*(b_i) = \theta \chi_i \neq 0$.  So it must be that $\theta b_i \neq 0$.
Then by Lemma \ref{theta lemma} each $\Mtb{b_i}$ includes as a free submodule of $\tilde{H}^{*,*}(X_{k+1})$. Using the independence of the $\chi_i$, we can take all the  $b_i$ together and include $\Mt\langle b_{n+1}, \dots, b_{m+1} \rangle$ as a free submodule.

We now have a short exact sequence
\[
0 \to \Mtb{b_{n+1}, \dots, b_{m+1}} \to \tilde{H}^{*,*}(X_{k+1}) \to N \to 0,
\]
where $N$ is the quotient.  By Lemma \ref{injectivity lemma}, we know that $\Mt$ is self-injective.  So this short exact sequence splits, making $N$ a direct summand of $\tilde{H}^{*,*}(X_{k+1})$.  Moreover, since we already know $\tilde{H}^{*,*}(X_{k+1})$ is free, $N$ is also free.\footnote{Note that $\Mt$ is a graded local ring with unique maximal ideal $(\rho,\tau)$.  Thus every finitely generated projective $\Mt$-module is free.
See \cite{Huishi}.}

Now we will $\rho$-localize the original long exact sequence.  As with $\tau$-localization, $\rho$-localization is exact here and since $\im(d) \subseteq \Mt^-\langle \nu \rangle$ we have that $\rho^{-1}d \equiv 0$.  So we have the short exact sequence
\[
0 \to \rho^{-1}\tilde{H}^{*,*}(S^{p,q}) \to \rho^{-1}\tilde{H}^{*,*}(X_{k+1}) \to \rho^{-1}\tilde{H}^{*,*}(X_k) \to 0.
\]
Into this short exact sequence we can include the $\rho$-localization of the isomorphism $\bigoplus\Mt\langle b_i\rangle\iso\bigoplus\Mt\langle \chi_i\rangle$, and quotient to obtain a short exact sequence of short exact sequences as in the diagram below.  Everything in the following diagram is a free $\rho^{-1}\Mt$-module so all the short exact sequences split.
\begin{center}
\begin{tikzcd}
 & 0 \arrow[d] & 0 \arrow[d] & 0 \arrow[d] & \\
0 \arrow[r] & 0 \arrow[r] \arrow[d] & \bigoplus_i (\rho^{-1}\Mtb{b_i}) \arrow[r] \arrow[d] & \bigoplus_i (\rho^{-1}\Mtb{\chi_i}) \arrow[r] \arrow[d] & 0\\
0 \arrow[r] & \rho^{-1}\tilde{H}^{*,*}(S^{p,q}) \arrow[r] \arrow[d] & \rho^{-1}\tilde{H}^{*,*}(X_{k+1}) \arrow[r] \arrow[d] & \rho^{-1}\tilde{H}^{*,*}(X_k) \arrow[r] \arrow[d] & 0\\
0 \arrow[r] & \rho^{-1}\tilde{H}^{*,*}(S^{p,q}) \arrow[r] \arrow[d] & \rho^{-1}N \arrow[r] \arrow[d] & \bigoplus_i (\rho^{-1}\Mtb{\omega_i}) \arrow[r] \arrow[d] & 0\\
 & 0 & 0 & 0 & \\
\end{tikzcd}
\end{center}
In particular, from the last row we have that
\[
\rho^{-1}N \cong \left(\rho^{-1}\tilde{H}^{*,*}(S^{p,q})\right) \oplus \left(\bigoplus_{i=1}^{n} (\rho^{-1}\Mtb{\omega_i})\right).
\]
Thus, making use of Remark \ref{rho-torsion remark}, the fixed-set dimensions of the free generators of $N$ are the same as the fixed-set dimensions of the free generators of $\Mtb{\omega_1, \dots, \omega_n}$ together with that of $\tilde{H}^{*,*}(S^{p,q})$.

Again, it seems plausible at this point that we simply have a new generator in the bidegree of $\nu$ with no shifts.
Regardless, we claim any free generator of $N$ in topological dimension $p$ must have fixed-set dimension at least $p-q$ and so have weight at most $q$ (but possibly lower).  This is because each of the $\omega_i$ maps to the lower cone of $\nu$, so $\fix(\nu) = p-q$ is the lowest fixed-set dimension of the generators of $N$.  The generators $b_i$ are in precisely the same bidegrees as the original generators $\chi_i$.  Thus all free generators of $\tilde{H}^{*,*}(X_{k+1})$ having topological dimension $p$ have weight at most $q$.  This completes the inductive step and hence the proof of the freeness theorem for finite $\Rep(C_2)$-complexes.
\end{proof}

\subsection{Finite type freeness theorem}

As a consequence of the finite case, we can extend Kronholm's freeness theorem to infinite complexes of finite type. Recall, we say that a $\text{Rep}(C_2)$-complex is finite type if it is built with finitely many cells of each fixed-set dimension.

\begin{thm}
If $X$ is a finite type $\Rep(C_2)$-complex then $\tilde{H}^{*,*}(X;\underline{\F_2})$ is free as a graded $\Mt$-module.
\end{thm}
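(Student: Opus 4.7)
The plan is to filter $X$ by fixed-set dimension, apply Theorem \ref{main thm} to each finite piece, and pass to the colimit via a Milnor $\varprojlim^1$ argument. Let $X^{(n)}$ denote the subcomplex of $X$ consisting of cells of fixed-set dimension at most $n$. Since $X$ is finite type, each $X^{(n)}$ has only finitely many cells and is a finite $\Rep(C_2)$-complex, so $\tilde H^{*,*}(X^{(n)})$ is a free $\Mt$-module, and $X = \bigcup_n X^{(n)}$.

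Next I would examine the tower $\{\tilde H^{*,*}(X^{(n)})\}$ with restriction maps induced by inclusion. The cofiber $X^{(n+1)}/X^{(n)}$ is a wedge of representation spheres $S^{V_\alpha}$, each of fixed-set dimension $n+1$, whose cohomology is a direct sum of shifts $\Sigma^{n+1+w_\alpha,w_\alpha}\Mt$. A direct calculation with the upper- and lower-cone vanishing regions of $\Mt$ shows that for any fixed bidegree $(p,q)$ both $\tilde H^{p,q}$ and $\tilde H^{p+1,q}$ of this cofiber vanish once $n$ exceeds a threshold depending on $(p,q)$ (roughly $n \ge p+1$ when $q \ge 0$). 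Plugging into the long exact sequence of the pair forces the restriction $\tilde H^{p,q}(X^{(n+1)})\to \tilde H^{p,q}(X^{(n)})$ to be an isomorphism for $n$ large. Thus the system is eventually constant in each bidegree, Mittag-Leffler holds trivially, $\varprojlim^1$ vanishes, and the Milnor sequence yields $\tilde H^{*,*}(X)\cong \varprojlim \tilde H^{*,*}(X^{(n)})$.

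The crucial algebraic observation is that case (c) of Lemma \ref{basis update} cannot arise when attaching a cell of fixed-set dimension $n+1$ to a subcomplex whose free generators all have fixed-set dimension at most $n$: the ramp generators $\omega_i$ mapping to $\Mt^-\langle \nu\rangle$ would need fixed-set dimension at least $\fix(\nu)+1=n+2$, which is impossible. Hence only cases (a) and (b) arise, and passing from $X^{(n)}$ to $X^{(n+1)}$ leaves every free generator of fixed-set dimension strictly less than $n$ unchanged in bidegree (no Kronholm shifts occur), while generators of fixed-set dimension $n$ can only be canceled and generators of fixed-set dimension $n+1$ are only introduced. It follows that for each $d$ the set of free generators of fixed-set dimension $d$ in $\tilde H^{*,*}(X^{(n)})$ stabilizes to a finite set $F_d$ once $n\ge d+1$, with the restriction maps in the tower sending basis element to basis element.

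Finally, for each $g\in F_d$ the compatible family $(g_n)_{n\ge d+1}$ lifts to an element $\tilde g \in \tilde H^{*,*}(X)$, and since $\theta g\ne 0$ in each free module $\tilde H^{*,*}(X^{(n)})$ the lift satisfies $\theta \tilde g\ne 0$. Lemma \ref{theta lemma} therefore makes each $\Mtb{\tilde g}$ a free submodule, and a bidegree-wise comparison (using that only finitely many $F_d$ contribute to any fixed $\tilde H^{p,q}(X)$) shows that $\bigoplus_d\bigoplus_{g\in F_d}\Mtb{\tilde g}\to \tilde H^{*,*}(X)$ is the desired free decomposition. The hardest part will be the simultaneous bookkeeping of the two stabilizations: the vanishing of the cofiber cohomology in each bidegree and the stabilization of generators fixed-set dimension by fixed-set dimension, both of which rely on the geometry of the upper and lower cones of $\Mt$.
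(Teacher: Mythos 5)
Your filtration does not exist in general: the collection of cells of fixed-set dimension at most $n$ need not form a subcomplex of $X$, because the cell structure of a $\Rep(C_2)$-complex only orders cells by topological dimension, and the attaching map of a cell of low fixed-set dimension may run through cells of higher fixed-set dimension (of lower topological dimension). For instance, build $S^{1,0}$ from a point and a cell $D(\R^{1,0})$ (fixed-set dimension $1$), then attach $D(\R^{2,2})$ (fixed-set dimension $0$) along the equivariant quotient map $S(\R^{2,2})=S^1_a \to S^1_a/C_2 \cong S^{1,0}$. Your $X^{(0)}$ would contain the $2$-cell but not the $1$-cell its attaching map hits, so it is not a subcomplex and the tower $\{X^{(n)}\}$ is undefined. (Relatedly, even where consecutive stages make sense, $X^{(n+1)}/X^{(n)}$ is not a wedge of representation spheres, since cells of fixed-set dimension $n+1$ may be attached to one another; the vanishing claim survives, but only by an induction over cells, as in the paper's treatment of $X/X_{k(i)}$.) This is exactly why the paper keeps the one-cell-at-a-time filtration ordered by topological dimension and weight and merely selects finite stages $X_{k(i)}$ containing all cells of fixed-set dimension at most $i$, together with whatever else has already been attached.

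The second gap is in your ``crucial algebraic observation.'' The degree count correctly rules out differentials into $\Mt^-\langle\nu\rangle$ from generators of fixed-set dimension at most $\fix(\nu)$, but concluding that only cases (a) and (b) of Lemma \ref{basis update} occur, and hence that no Kronholm shifts happen and generators stabilize bidegree by bidegree, applies that lemma outside its hypotheses. The lemma requires every generator of the previous stage to satisfy $\Top(\gamma_i)\le p$, with $\wt(\gamma_i)\le q$ when $\Top(\gamma_i)=p$; once cells are reordered by fixed-set dimension this fails, since $X^{(n)}$ may contain cells of arbitrarily large topological dimension while the newly attached cell of fixed-set dimension $n+1$ can have small topological dimension. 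In particular the $\rho$-localization step that forces any top-cone differential to be surjective (so $\wt(\lambda)=q$, $\fix(\lambda)=\fix(\nu)-1$) is unavailable, and you cannot exclude differentials $d(\lambda)=\tau^k\nu$ with $k\ge 1$ --- precisely the behavior that hypothesis exists to forbid (see the footnote in the proof of Lemma \ref{basis update} and Example 6.6 of \cite{CM}). The paper needs only the weaker statement that, in the topologically ordered filtration (where Lemma \ref{basis update} does apply), a generator of fixed-set dimension $i$ cannot interact with a cell of fixed-set dimension at least $i+2$; combined with the bidegree-wise stabilization coming from the vanishing region of $\tilde{H}^{*,*}\left(X/X_{k(i)}\right)$ and the Milnor $\varprojlim^1$ sequence, this yields the stable generating sets $\beta_i$ without ever asserting that later attachments cause no shifts among generators of higher fixed-set dimension.
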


\begin{proof}
As in the proof of Theorem \ref{main thm}, we begin by filtering $X$ as a sequence of subcomplexes $X_k$ where each $X_{k+1}$ is formed from $X_k$ by attaching a single $\Rep(C_2)$-cell $e_{k+1}$.  We assume the cells are attached by increasing topological dimension and, within each topological dimension, by increasing weight.  We will show the $\varprojlim^1$ term vanishes and find a free basis for the inverse limit.

Because $X$ is finite type, for each $i \geq 0$ we can choose a subcomplex $X_{k(i)}$ that contains every cell of fixed-set dimension less than or equal to $i$ by defining
\[
k(i)=\max\{k: e_k\cong D(\R^{p_k,q_k})\text{ and }p_k-q_k\le i\}.
\]
Note that $\tilde{H}^{*,*}(X_{k(i)})$ is free by Theorem \ref{main thm} because $X_{k(i)}$ is a finite $\Rep(C_2)$-complex.  Recall from the proof of Theorem \ref{main thm} that the $\Mt$ generators of the cohomology may lie in different bidegrees than the cells forming $X_{k(i)}$ and there may be fewer $\Mt$ generators than cells.  However, each $\Mt$ generator of the cohomology will have the same fixed-set dimension as one of the cells used in constructing the space.

Now we will consider the cofiber sequence $X_{k(i)} \to X \to X/X_{k(i)}$.  As usual, this induces a long exact sequence in cohomology
\[
\cdots \to \tilde{H}^{*,*}\left(X/X_{k(i)}\right) \to \tilde{H}^{*,*}(X) \to \tilde{H}^{*,*}\left(X_{k(i)}\right) \to \tilde{H}^{*+1,*}\left(X/X_{k(i)}\right) \to \cdots.
\]
Observe that, aside from the basepoint, all the cells of the quotient $X/X_{k(i)}$ have fixed-set dimension greater than $i$.  Thus the reduced cohomology of the quotient vanishes for all bidegrees $(p,q)$ with $p \leq i$ and $q \geq p-i-2$ as in Figure \ref{quotient vanishing fig}.

\begin{figure}[ht]
\begin{tikzpicture}[scale=0.375]

\draw[gray] (-1.5,-1) -- (9.5,-1);
\draw[gray] (0,-7.5) -- (0,5.5);

\fill[lightgray, opacity=0.25] (4,5.5) -- (4,-2) -- (-1.5,-7.5) -- (-1.5,5.5) -- cycle;
\draw[thick, gray] (4,5.5) -- (4,-2) -- (-1.5,-7.5);

\draw[thick, gray, dashed] (4,-1) -- (9.5,4.5);

\draw (0.1,-6) -- (-0.1,-6) node[right] {\small $\ i-2$};
\draw (9.5,-1) node[below, black] {\small $p$};
\draw (0,5.5) node[left, black] {\small $q$};
\draw (4,-1) node[above left] {\small $i$};

\end{tikzpicture}
\caption{Vanishing region for $\tilde{H}^{*,*}\left(X/X_{k(i)}\right)$.}\label{quotient vanishing fig}
\end{figure}
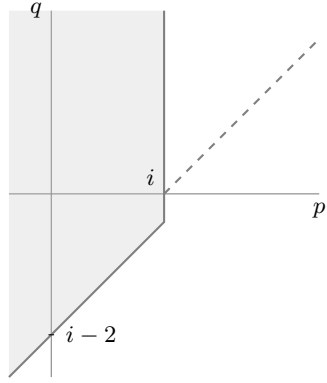

Fix any bidegree $(p,q)$.  There exists an $i$ such that $i > p$ and $i > p-q-2$.  This implies both $(p,q)$ and $(p+1,q)$ lie in the vanishing region for $X/X_{k(i)}$.  So in the long exact sequence for the cofiber sequence $X_{k(i)} \to X \to X/X_{k(i)}$ we have
\[
\cdots \to 0 \to \tilde{H}^{p,q}(X) \to \tilde{H}^{p,q}\left(X_{k(i)}\right) \to 0 \to \cdots
\]
and we see that $\tilde{H}^{p,q}(X) \cong \tilde{H}^{p,q}\left(X_{k(i)}\right)$.  Similarly, if $n>k(i)$ and we consider the cofiber sequence $X_{k(i)} \to X_{n} \to X_{n}/X_{k(i)}$, we have that $\tilde{H}^{p,q}(X_n) \cong \tilde{H}^{p,q}\left(X_{k(i)}\right)$.
This tells us that, as a vector space, every bidegree of the cohomology of $X$ stabilizes by some finite stage $k(i)$.  Of course, this value depends on both $p$ and $q$.

Recall, as $X = \colim X_k$, we have the Milnor exact sequence
\[
0\to {\varprojlim}^1 \tilde{H}^{*-1,*} (X_k)\to \tilde{H}^{*,*} (X)\to \varprojlim \tilde{H}^{*,*}(X_k)\to 0,
\]
where the inverse limit is taken for the tower
\[
\tilde{H}^{*,*}(X_{0}) \xleftarrow{\iota^*}
\tilde{H}^{*,*}(X_{1}) \xleftarrow{\iota^*} \cdots
\xleftarrow{\iota^*}
\tilde{H}^{*,*}(X_{k})
 \xleftarrow{\iota^*}
 \tilde{H}^{*,*}(X_{k+1})
  \xleftarrow{\iota^*} \cdots
\]
(see for example Section 3F of \cite{Hatcher}).
In particular, for each $p$ and $q$ we have the short exact sequence
\[
0\to {\varprojlim}^1 \tilde{H}^{p-1,q} (X_k)\to \tilde{H}^{p,q} (X)\to \varprojlim \tilde{H}^{p,q}(X_k)\to 0.
\]
Fix $(p,q)$ and choose $i$ such that $i > p$ and $i>p-q-2$ as above.  Truncating the sequence does not change the value of $\varprojlim^1$ or the inverse limit, so we have
\[
\underset{\scriptscriptstyle{k}}{\varprojlim}^1 \tilde{H}^{p-1,q} (X_k) \iso \underset{\scriptscriptstyle{n \ge k(i)}}{\varprojlim}\!^{1} \tilde{H}^{p-1,q} \left(X_{n}\right)
\]
and
\[
\underset{\scriptscriptstyle{k}}\varprojlim\, \tilde{H}^{p,q} (X_k) \cong \underset{\scriptscriptstyle{n\ge k(i)}}\varprojlim \tilde{H}^{p,q} \left(X_n\right).
\]
As shown above, the maps in the truncated tower
\[\H pq \left(X_{k(i)}\right) \xleftarrow{\iota^*} \H pq \left(X_{k(i)+1}\right) \xleftarrow{\iota^*} \H pq \left(X_{k(i)+2}\right) \xleftarrow{\iota^*} \dots\]
are all isomorphisms.

Recall that $\varprojlim^1$ vanishes whenever all the maps in the tower are surjective and thus  ${\varprojlim}^1 \tilde{H}^{p-1,q} \left(X_k\right) = 0$.  The same argument can be made for any bidegree $(p,q)$, hence ${\varprojlim}^1 \tilde{H}^{*-1,*} (X_{k}) \equiv 0$, making $\tilde{H}^{*,*}(X) \to \varprojlim \tilde{H}^{*,*}(X_k)$ an isomorphism of bigraded vectors spaces.  Since this is an $\Mt$-module map,  $\tilde{H}^{*,*}(X) \cong \varprojlim \tilde{H}^{*,*}(X_k)$ as $\Mt$-modules.

It remains to show that $\varprojlim \tilde{H}^{*,*}(X_k)$ is free.  For that we use a stabilization phenomenon of the cohomologies as $\Mt$-modules to find a basis for the inverse limit.  Fix an $i \geq 0$ and consider $\tilde{H}^{*,*}\left(X_{k(i+1)}\right)$ as an $\Mt$-module.  If we attach the next cell to form $X_{k(i+1)+1}$ and compute its cohomology via the usual cofiber sequence, any $\Mt$ generators with fixed-set dimension $i$ will be sent to zero by the differential in the long exact sequence. This is simply for degree reasons, as an $\Mt$ generator with fixed-set dimension $i$ cannot support a nonzero differential to an $\Mt$ generated by an element with fixed-set dimension $i+2$ or greater.  Furthermore, the generators of fixed-set dimension $i$ will continue to support zero differentials when computing the cohomology of $X_n$ for any $n > k(i+1)$.

Now for each $i\ge 0$, choose a graded free basis for $\tilde{H}^{*,*}\left(X_{k(i+1)}\right)$ and let $\beta_i$ be a subset of that basis consisting only of generators of fixed-set dimension $i$.
The union of all such $\beta_i$ will form our graded free basis for $\tilde{H}^{*,*}(X)$.  For any index $i$, we can define a map $\Mtb{\beta_i} \to \varprojlim\tilde{H}^{*,*}(X_k)$ by including $\Mtb{\beta_i} \to \tilde{H}^{*,*}\left(X_{k(i+1)}\right)$.
In fact, we have an inclusion $\Mtb{\beta_i} \to \tilde{H}^{*,*}(X_{n})$ for any $n > k(i+1)$ since if a generator in $\beta_i$ has bidegree $(a,b)$, then $\tilde{H}^{a,b}\left(X_{k(i+1)}\right) \cong \tilde{H}^{a,b}(X_{n})$. 
Thus we can define a cone over the inverse system
\begin{center}
\begin{tikzcd}
\tilde{H}^{*,*}\left(X_{k(i+1)}\right)
& \arrow[swap]{l}{\iota^*}\tilde{H}^{*,*}\left(X_{k(i+1)+1}\right)
& \arrow[swap]{l}{\iota^*}\tilde{H}^{*,*}\left(X_{k(i+1)+2}\right)
& \arrow[swap]{l}{\iota^*}\cdots\\
 & \Mtb{\beta_i} \arrow{ul} \arrow{u} \arrow{ur}
\end{tikzcd}
\end{center}
and so we get a map to the inverse limit
$$ f_i : \Mtb{\beta_i} \to \varprojlim_j\tilde{H}^{*,*}\left(X_{k(i+1)+j}\right) \iso \varprojlim\tilde{H}^{*,*}(X_k).$$

Finally taking the direct sum over all $f_i$ we have a map
\[
f: \bigoplus_{i=0 }^{\infty}\Mtb{\beta_i} \longrightarrow \varprojlim\tilde{H}^{*,*}(X_k) \cong \tilde{H}^{*,*}(X).
\]
As above, for any bidegree $(p,q)$, the map $f$ factors through a finite stage where $\tilde{H}^{p,q} \left(X_{k(i)}\right)\iso \tilde{H}^{p,q}(X)$ for some appropriate choice of $i$. Thus the $\Mt$-module map $f$ is an isomorphism because it is an isomorphism in every bidegree, and hence $H^{*,*}(X)$ is free.
\end{proof}

\begin{counterexample}\label{counterex}
The freeness theorem does not hold for all locally finite $\Rep(C_2)$-complexes.  Consider the space $Y = \bigvee_{n=0}^{\infty} S^{n,n}$.  By the wedge axiom
\[
\tilde{H}^{*,*}(Y) \cong \prod_{n=0}^{\infty} \tilde{H}^{*,*}(S^{n,n}) \cong \prod_{n=0}^\infty \Sigma^{n,n} \Mt.
\]
Nonequivariantly, the singular cohomology of the underlying space is a free $\F_2$-module because $\F_2$ is a field.  However, $\Mt$ is not a field and $\tilde{H}^{*,*}(Y)$ is not a free $\Mt$-module.  Suppose $\gamma_n$ is the generator of the $\Mt$ in bidegree $(n,n)$ of the infinite product.  Then there is an element in bidegree $(0,-2)$ of $\tilde{H}^{*,*}(Y)$ of the form
\[
x = \left( \theta \gamma_0, \frac{\theta}{\rho}\gamma_1, \frac{\theta}{\rho^2}\gamma_2, \frac{\theta}{\rho^3}\gamma_3, \dots \right).
\]
Notice that $x$ is not $\rho$-torsion since for any $k$ we have $\rho^k x \neq 0$. However $\tau x = 0$.  In $\Mt$, and indeed in any free module, an element that is not $\rho$-torsion is also not $\tau$-torsion.  Hence $\tilde{H}^{*,*}(Y)$ cannot be free.
\end{counterexample}

As a corollary to the freeness theorem, we have a splitting at the spectrum level.

\begin{cor}
Suppose $X$ is a finite type $\Rep(C_2)$-complex.  Then
\[
F(\Sigma^{\infty}X_+, H\underline{\F_2}) \simeq \bigvee_{i \in I} \left(S^{-p_i,-q_i} \Smash H\underline{\F_2}\right),
\]
where $I$ is a countable set indexing the elements in any free basis for $H^{*,*}(X)$.
\end{cor}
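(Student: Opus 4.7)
The plan is to use the finite type freeness theorem to pick a graded free basis for $H^{*,*}(X;\underline{\F_2})$, lift each basis element to an $H\underline{\F_2}$-module map out of a suspended copy of $H\underline{\F_2}$, wedge these together, and check the resulting map is an equivalence by computing $RO(C_2)$-graded homotopy.

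First I would invoke the finite type freeness theorem to choose a graded $\Mt$-basis $\{x_i\}_{i\in I}$ of $H^{*,*}(X;\underline{\F_2})$ with $|x_i|=(p_i,q_i)$. Each $x_i$ is represented by a map $\Sigma^{\infty}X_+\to\Sigma^{p_i,q_i}H\underline{\F_2}$, which by adjunction is a map $\bar{x}_i\colon S^{-p_i,-q_i}\to F(\Sigma^{\infty}X_+,H\underline{\F_2})$. Because the function spectrum inherits a canonical $H\underline{\F_2}$-module structure from the codomain, I would extend each $\bar{x}_i$ along the unit $S^{-p_i,-q_i}\to S^{-p_i,-q_i}\Smash H\underline{\F_2}$ to a module map $\tilde{x}_i$, and then wedge the $\tilde{x}_i$ together to obtain
\[
\phi\colon\bigvee_{i\in I}\left(S^{-p_i,-q_i}\Smash H\underline{\F_2}\right)\longrightarrow F(\Sigma^{\infty}X_+,H\underline{\F_2}).
\]

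To show $\phi$ is an equivalence of $C_2$-spectra, I would check that it induces an isomorphism on $\pi_{*,*}$. The target has $\pi_{a,b}F(\Sigma^{\infty}X_+,H\underline{\F_2})\cong H^{-a,-b}(X;\underline{\F_2})$, which by the freeness theorem decomposes as $\bigoplus_i\Mt^{-a-p_i,-b-q_i}$, and by construction $\phi$ sends the canonical generator of the $i$th wedge summand to $x_i$. What remains is to identify the bigraded homotopy of the source with the same direct sum. The main obstacle is precisely this, since $I$ may be infinite and in general $\pi_{*,*}$ of an infinite wedge need not agree with the direct sum of $\pi_{*,*}$'s. The key observation is that $\Mt^{p,q}$ is supported in two cones both of which lie in fixed-set dimension $\geq 0$, so the summand $S^{-p_i,-q_i}\Smash H\underline{\F_2}$ has nonzero $\pi_{a,b}$ only when $p_i-q_i\leq b-a$. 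By the construction of the basis in the proof of the finite type freeness theorem, for every fixed integer $D$ there are only finitely many generators of fixed-set dimension at most $D$. Hence for each bidegree $(a,b)$ only finitely many wedge summands contribute to $\pi_{a,b}$, so writing the wedge as the filtered homotopy colimit of its finite subwedges one sees that $\pi_{a,b}$ stabilizes at a finite stage and equals the direct sum $\bigoplus_i\Mt^{-a-p_i,-b-q_i}$. This realizes $\phi$ as a bigraded isomorphism, and hence an equivalence of $C_2$-spectra, completing the proof.
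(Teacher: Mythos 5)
Your proposal is correct and follows essentially the same route as the paper: lift a free basis through the adjunction $H^{p,q}(X)\cong\pi_{-p,-q}F(\Sigma^{\infty}X_+,H\underline{\F_2})$, extend each class to an $H\underline{\F_2}$-module map, wedge them together, and conclude by checking an isomorphism on $RO(C_2)$-graded homotopy, which detects equivalences. The only difference is that you spell out why $\pi_{a,b}$ of the infinite wedge agrees with the direct sum (using that $\Mt$ is concentrated in nonnegative fixed-set dimensions and that a finite type complex has finitely many generators in each fixed-set dimension), a point the paper's proof leaves implicit in the phrase ``by construction this map induces an isomorphism on bigraded homotopy.''
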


\begin{proof}
Let $\beta$ be a free basis for $H^{*,*}(X)$ with each generator $\gamma_i$ having bidegree $(p_i,q_i)$ for $i \in I$.  By adjunction $H^{*,*}(X) \cong \pi_{-*,-*}F(\Sigma^{\infty} X_+, H \underline{\F_2})$ via
\begin{align*}
H^{p,q}(X) &= [\Sigma^{\infty}X_+, S^{p,q}\Smash H\underline{\F_2}]\\
&\cong [\Sigma^{\infty}X_+ \Smash S^{-p,-q}, H\underline{\F_2}]\\
&\cong [S^{-p,-q}, F(\Sigma^{\infty}X_+, H\underline{\F_2})]\\
&= \pi_{-p,-q}F(\Sigma^{\infty}X_+, H\underline{\F_2}).
\end{align*}
Each $\gamma_i \in \pi_{-p_i,-q_i}F(\Sigma^{\infty} X_+, H \underline{\F_2})$ gives rise to a map of spectra
\[
S^{-p_i, -q_i} \Smash H\underline{\F_2} \xrightarrow{\gamma_i \Smash id} F(\Sigma^{\infty} X_+, H \underline{\F_2}) \Smash H\underline{\F_2} \xrightarrow{\mu} H\underline{\F_2},
\]
where $F(\Sigma^{\infty} X_+, H \underline{\F_2})$ is an $H\underline{\F_2}$-module via the structure map $\mu$.
Taking all of these maps together, we get a map
\[
\bigvee_{i\in I} \left(S^{-p_i,-q_i} \Smash H\underline{\F_2} \right) \longrightarrow F(\Sigma^{\infty}X_+, H\underline{\F_2}).
\]
By construction this map induces an isomorphism on bigraded homotopy.  Since $RO(C_2)$-graded homotopy groups detect weak equivalences (see for example Section 6.10 of \cite{CM}), the map is a weak equivalence.
\end{proof}

\section{Kronholm shifts of free generators}\label{shifts}

In this section we specify the shifting of free generators that occurs when attaching a single cell to a $\Rep(C_2)$-complex with an interesting differential.  By interesting differential, we mean a differential of the last type that shows up in the proof of the freeness theorem in Section \ref{main thm}, where a ramp (Definition \ref{rampdef}) of generators map to the bottom cone.  The setup of the following theorem is the same as case (c) in the proof, with $X$ playing the role of $X_k$ and $Y$ playing the role of $X_{k+1}$.  The result is that we can precisely specify the bidegrees of free generators in cohomology after such a differential.

\begin{thm}\label{formula}
Let $X$ be a finite $\Rep(C_2)$-complex of dimension less than or equal to $p$.  Let $Y$ be the space formed by attaching a single $\Rep(C_2)$-cell $D(\R^{p,q})$ to $X$ where any cells of $X$ with dimension $p$ have weight less than or equal to $q$.  The cofiber sequence $X \to Y \to Y/X \cong S^{p,q}$ induces a long exact sequence in reduced cohomology with differential $d: \tilde{H}^{*,*}(X) \to \tilde{H}^{*+1,*}(S^{p,q})$.

Assume there exists a basis for $\tilde{H}^{*,*}(X) \cong \Mtb{\omega_1, \dots, \omega_n, \chi_{n+1}, \dots, \chi_{m+1}}$, where $\omega_1, \dots, \omega_n$ satisfy the ramp condition and let $\nu$ denote the free generator of $\tilde{H}^{*,*}(S^{p,q})$.  Suppose each $\omega_i$ supports a nonzero differential to $\Mt^{-}\langle \nu \rangle$ with
\[
d(\omega_i) = \frac{\theta}{\rho^{j_i}\tau^{k_i}} \nu
\]
for some integers $j_i, k_i \geq 0$ and $d(\chi_i) = 0$ for $n+1 \leq i \leq m+1.$  Then
\[
\tilde{H}^{*,*}(Y) \cong
\left(\bigoplus_{i=1}^n \Sigma^{|\omega_i| + (0,s_i)} \Mt\right)
\oplus \left(\Sigma^{|\nu| - (0,s_1+s_2+\dots+s_n)} \Mt\right)
\oplus \left(\bigoplus_{j=n+1}^{m+1} \Sigma^{|\chi_j|} \Mt\right)
\]
where the shifts in weight are given by
\begin{align*}
	s_1& = k_1 + 1\\
	s_2&=k_2-k_1\\
	s_3&=k_3-k_2\\
	&\ \, \vdots\\
	s_n&=k_n-k_{n-1}.
\end{align*}
\end{thm}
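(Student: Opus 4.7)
The plan is to apply the freeness theorem (Theorem \ref{main thm}) to know $\tilde{H}^{*,*}(Y)$ is a free $\Mt$-module, then construct explicit classes in the claimed bidegrees and split them off using Lemma \ref{theta lemma} and Lemma \ref{injectivity lemma}. Each $\chi_j$ (for $n+1 \le j \le m+1$) lies in $\ker(d)$, so it lifts by exactness to $b_j \in \tilde{H}^{*,*}(Y)$ with $\iota^*(b_j) = \chi_j$. Since $\theta\chi_j \neq 0$, we have $\theta b_j \neq 0$, and Lemma \ref{theta lemma} combined with Lemma \ref{injectivity lemma} splits off a summand $\Mtb{b_j}$ in bidegree $|\chi_j|$.

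For the $n+1$ remaining generators, define cocycles in $\tilde{H}^{*,*}(X)$ by
\begin{equation*}
y_1 := \tau^{k_1+1}\omega_1, \qquad y_i := \tau^{k_i-k_{i-1}}\omega_i + \rho^{j_{i-1}-j_i}\omega_{i-1} \ (2 \le i \le n), \qquad z := \rho^{j_n+1}\omega_n.
\end{equation*}
A direct bidegree check gives $|y_i| = |\omega_i| + (0, s_i)$ and $|z| = (p, q-k_n-1) = |\nu| - (0, s_1 + \cdots + s_n)$. Using $\tau\theta = 0 = \rho\theta$ in $\Mt$, a short calculation yields $d(y_i) = 0$ and $d(z) = 0$, so exactness produces lifts $\tilde{y}_i, \tilde{z} \in \tilde{H}^{*,*}(Y)$. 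The heart of the argument is verifying $\theta\tilde{y}_i \neq 0$ and $\theta\tilde{z} \neq 0$. Since $\iota^*(\theta\tilde{y}_i) = \theta y_i = 0$ (again using $\tau\theta = \rho\theta = 0$), the class $\theta\tilde{y}_i$ lies in $\ker(\iota^*) = \im(\pi^*) \cong \cok(d)$, in bidegree $|\theta y_i|$. A direct calculation identifies the unique nonzero element of $\Mtb{\nu}$ at that bidegree as $\tfrac{\theta}{\rho^{j_i+1}\tau^{k_{i-1}+1}}\nu$ (with the convention $k_0 := -1$); the ramp condition ($j_l$ strictly decreasing, $k_l$ strictly increasing) prevents this element from lying in $\im(d)$, since membership would require some $l$ with both $j_l \ge j_i + 1$ (forcing $l < i$) and $k_l \ge k_{i-1} + 1$ (forcing $l \ge i$), which is impossible. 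Similarly $\theta\tilde{z}$ corresponds to $\tfrac{\theta}{\tau^{k_n+1}}\nu$, outside $\im(d)$ by maximality of $k_n$. Lemma \ref{theta lemma} and Lemma \ref{injectivity lemma} then split off the claimed free summands.

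The $\tau$-localization argument from the proof of Theorem \ref{main thm} shows $\tilde{H}^{*,*}(Y)$ has exactly $m+2$ free generators, matching the $m+2$ summands produced above; these therefore give the complete decomposition. The main obstacle is the identification of $\theta\tilde{y}_i$ with the specific representative in $\cok(d)$ and the verification (via the ramp condition) that this class evades $\im(d)$; once this bookkeeping is complete, the stated shift formula follows.
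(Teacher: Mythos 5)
Your construction of the candidate generators is the same as the paper's explicit basis in Section \ref{shifts}: your $y_1,\dots ,y_n,z$ are exactly the classes $\tau^{k_1+1}\omega_1$, $\rho^{j_i-j_{i+1}}\omega_i+\tau^{k_{i+1}-k_i}\omega_{i+1}$, $\rho^{j_n+1}\omega_n$ whose lifts the paper calls $a_0,\dots ,a_n$, and your bidegree computations, the verification that they are $d$-cycles, and the check that $\frac{\theta}{\rho^{j_i+1}\tau^{k_{i-1}+1}}\nu$ avoids $\im(d)$ are all correct. However, there is a genuine gap at the step you yourself call the heart of the argument: you never prove $\theta\tilde{y}_i\neq 0$ (nor $\theta\tilde{z}\neq 0$). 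What you establish is that $\theta\tilde{y}_i$ lies in $\ker(\iota^*)=\im(\pi^*)$ and that $\cok(d)$ is nonzero (one-dimensional) in the bidegree $|\theta y_i|$; this only says that $\theta\tilde{y}_i$ is either zero or equal to the image of that distinguished class, and nothing in the long exact sequence rules out the value zero. Indeed, since $\theta y_i=0$ already in $\tilde{H}^{*,*}(X)$ and $\cok(d)$ vanishes in bidegree $|y_i|$ (so the lift $\tilde{y}_i$ is unique), the element $\theta\tilde{y}_i$ is precisely an invariant of the extension $0\to\cok(d)\to\tilde{H}^{*,*}(Y)\to\ker(d)\to 0$: if that extension were split, $\theta\tilde{y}_i$ would be zero. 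Moreover $y_i\in(\rho,\tau)\tilde{H}^{*,*}(X)$, so no computation of $\iota^*$ can detect that $\tilde{y}_i$ is a generator rather than a decomposable element; asserting $\theta\tilde{y}_i\neq 0$ is essentially asserting the conclusion of the theorem in that bidegree.

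The paper closes exactly this point by a different route, and you would need to import it (or a substitute) to complete your argument. It first splits off the $\Mtb{b_j}$ as you do, then determines the bidegrees of the free generators of the complement $N$ before exhibiting any explicit generators: $\tau$-localization gives the topological dimensions, $\rho$-localization gives the multiset of fixed-set dimensions, the observation that every element of $\cok(d)$ in bidegree $|\omega_i|$ is $\tau$-torsion while no element of $\Mt^{+}$ of a free generator is forces the generator in each dimension $\Top(\omega_i)$ to shift \emph{up}, and the slanted-grid counting then pins each weight to $\wt(\omega_i)+s_i$ and the $\nu$-generator to weight $q-\sum s_i$. Only after $N$ is known to have generators in exactly those bidegrees does the paper identify the lifts $a_i$ as generators (using injectivity of $\iota^*$ in those bidegrees). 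Your closing remark that the $\tau$-localization count of $m+2$ generators completes the proof cannot substitute for this, since the count is only useful once the free summands have actually been produced (and even then one needs a word about the joint independence of the split-off summands, as the paper supplies for the $b_j$). As written, the proposal assumes the crucial nonvanishing rather than proving it.
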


\begin{figure}[ht]
\begin{center}
\begin{tikzpicture}[scale=0.4]
\draw[gray] (-1.5,-1) -- (12.5,-1);
\draw[gray] (0,-2.5) -- (0,12.5);

\draw (11,-0.9) -- (11,-1.1) node[below] {\small $p$};
\draw (0.1,11) -- (-0.1,11) node[left] {\small $q$};

\draw[->,gray,dashed] (3,2) -- node[black,left]{\small $s_1$} (3,3);
\draw[->,gray,dashed] (5,1) -- node[black,above left]{\small $s_2$} (5,4);
\draw[->,gray,dashed] (6,1) -- node[black,right]{\small $s_3$} (6,2);
\draw[->,gray,dashed] (8,2) -- node[black,left]{\small $s_4$} (8,3);
\draw[->,gray,dashed] (9,0) -- node[black,right]{\small $s_5$} (9,3);
\draw[->,gray,dashed] (11,11) -- node[black,right]{\small $\Sigma s_i$} (11,2);

\fill (3,2) circle(3.75pt) node[below] {\small $\omega_1$};
\fill (5,1) circle(3.75pt) node[below] {\small $\omega_2$};
\fill (6,1) circle(3.75pt) node[below] {\small $\omega_3$};
\fill (8,2) circle(3.75pt) node[below] {\small $\omega_4$};
\fill (9,0) circle(3.75pt) node[below] {\small $\omega_5$};
\fill (11,11) circle(3.75pt) node[above] {\small $\nu$};

\end{tikzpicture}\hfill
\begin{tikzpicture}[scale=0.4]
\draw[gray] (-1.5,-1) -- (12.5,-1);
\draw[gray] (0,-2.5) -- (0,12.5);

\draw (11,-0.9) -- (11,-1.1) node[below] {\small $p$};
\draw (0.1,11) -- (-0.1,11) node[left] {\small $q$};

\fill (3,3) circle(3.75pt);
\fill (5,4) circle(3.75pt);
\fill (6,2) circle(3.75pt);
\fill (8,3) circle(3.75pt);
\fill (9,3) circle(3.75pt);
\fill (11,2) circle(3.75pt);

\draw (3,3) node[below] {\small $a_0$};
\draw (5,4) node[below] {\small $a_1$};
\draw (6,2) node[below] {\small $a_2$};
\draw (8,3) node[below] {\small $a_3$};
\draw (9,3) node[below] {\small $a_4$};
\draw (11,2) node[below] {\small $a_5$};

\end{tikzpicture}
\end{center}
\caption{Example of Kronholm shifts predicted by Theorem \ref{formula}, corresponding to Figures \ref{ramp diff}, \ref{lattice}, and \ref{ramp diff three}.}
\end{figure}
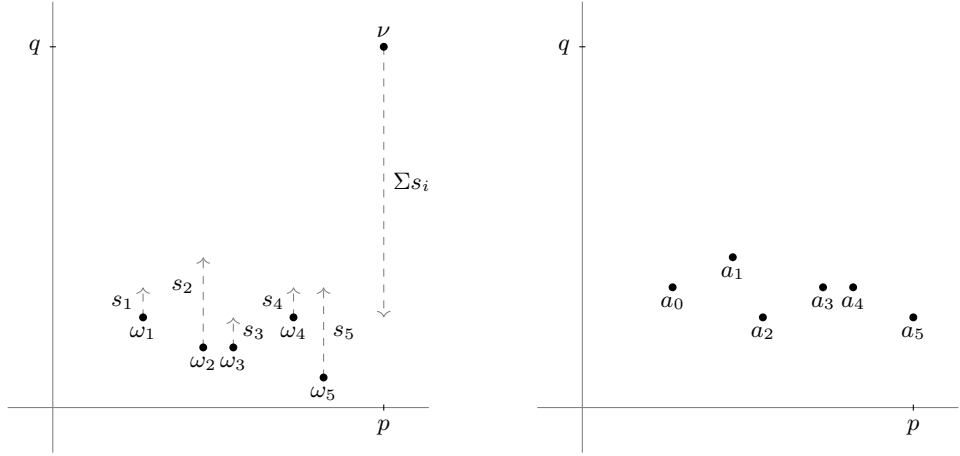

\begin{proof}
The cofiber sequence $X \to Y \to S^{p,q}$ induces a long exact sequence in cohomology
\[
\cdots \xrightarrow{d} \tilde{H}^{*,*}(S^{p,q}) \xrightarrow{\pi^*} \tilde{H}^{*,*}(Y) \xrightarrow{\iota^*} \tilde{H}^{*,*}(X) \xrightarrow{d} \tilde{H}^{*+1,*}(S^{p,q}) \xrightarrow{\pi^*} \cdots
\]
as usual.

In the proof of Theorem \ref{main thm}, we investigated the short exact sequence
\[
0 \to \cok(d) \to \tilde{H}^{*,*}(Y) \to \ker(d) \to 0.
\]
Recall from part (c) of the proof, using $\tau$-localization we know the topological dimensions of the free generators of $\tilde{H}^{*,*}(Y)$ are the same as those of $\tilde{H}^{*,*}(X)$ together with $\tilde{H}^{*,*}(S^{p,q})$.  Similarly, we know from $\rho$-localization that the fixed-set dimensions of the free generators are preserved.  Furthermore, we know that the generator in topological dimension $p$ of $\tilde{H}^{*,*}(Y)$ that ``corresponds'' to $\nu$ has fixed-set dimension at least $p-q$.  We also had the following split short exact sequence
\[
0 \to \Mtb{b_{n+1}, \dots, b_{m+1}} \to \tilde{H}^{*,*}(Y) \to N \to 0,
\]
where $\iota^*(b_i) = \chi_i$, making the quotient $N$ a free direct summand.

Including the isomorphism $\bigoplus\Mt\langle b_i\rangle\iso\bigoplus\Mt\langle \chi_i\rangle$ and quotienting, we have a short exact sequence of short exact sequences as below.
\begin{center}
\begin{tikzcd}
 & 0 \arrow[d] & 0 \arrow[d] & 0 \arrow[d] & \\
0 \arrow[r] & 0 \arrow[r] \arrow[d] & \bigoplus_i \Mtb{b_i} \arrow[r] \arrow[d] & \bigoplus_i \Mtb{\chi_i} \arrow[r] \arrow[d] & 0\\
0 \arrow[r] & \cok(d) \arrow[r] \arrow[d] & \tilde{H}^{*,*}(Y) \arrow[r] \arrow[d] & \ker(d) \arrow[r] \arrow[d] & 0\\
0 \arrow[r] & \cok(\tilde{d}) \arrow[r] \arrow[d] & N \arrow[r] \arrow[d] & \ker(\tilde{d}) \arrow[r] \arrow[d] & 0\\
 & 0 & 0 & 0 &
\end{tikzcd}
\end{center}
Here $\tilde{d}$ is the restriction of the differential to the direct summand $\Mtb{\omega_1, \dots, \omega_n}$.  An example of the restricted differential $\tilde{d}$ was shown in Figure \ref{ramp diff}.  For ease of reference, the same example appears again on the left in Figure \ref{ramp diff three}.

Now fix $i$ and consider an element $x \in N$ with bidegree $|x| = |\omega_i|$.  As a consequence of the ramp condition and the fact that each $\omega_i$ supports a nonzero differential, the kernel of $\tilde{d}$ in bidegree $|\omega_i|$ is always zero. So $x$ must be sent to zero in $\ker(\tilde{d})$ and thus lift to an element of $\cok(\tilde{d})$.  Because $\im(d) \subseteq \Mt^{-}\langle\nu\rangle$, any element of $\cok(d)$ in this bidegree has $\tau$-torsion.

We know there is a single free generator of $N$ with topological dimension $\Top(\omega_i)$, but no element of $\Mt^{+}$ has $\tau$-torsion.  So this free generator of $N$ must have higher weight (and so lower fixed-set dimension).  This same argument holds for each $\omega_i$.  So every generator in these topological dimensions must have ``shifted up,'' that is, it lies at a higher weight.  In order to preserve fixed-set dimensions, the generator in topological degree $p$ that corresponds to $\nu$ must have ``shifted down.''
\end{proof}

To clarify with an example, Figure \ref{lattice} shows a slanted grid corresponding to the topological and fixed-set dimensions of the generators for the differential pictured in Figure \ref{ramp diff} (and again on the left in Figure \ref{ramp diff three}).  On the left of Figure \ref{lattice} we see the original positions of the generators and on the right the positions of the shifted generators.  The red generators correspond to $\omega_1, \dots, \omega_n$ and the blue generator corresponds to $\nu$.  A combinatorial argument allows us to identify the positions of the shifted generators shown on the right.

As there is at most one generator in each topological dimension and each fixed-set dimension, we must have a single generator on each vertical line and a single generator on each diagonal line, both before and after the differential.  We have just argued above that all the red generators, those corresponding to $\omega_1, \dots, \omega_n$, must shift up. If we consider the generators from left to right, each will have a unique unoccupied diagonal above it on the slanted grid.  So finally the blue generator corresponding to $\nu$ must shift down as pictured in Figure \ref{lattice}.

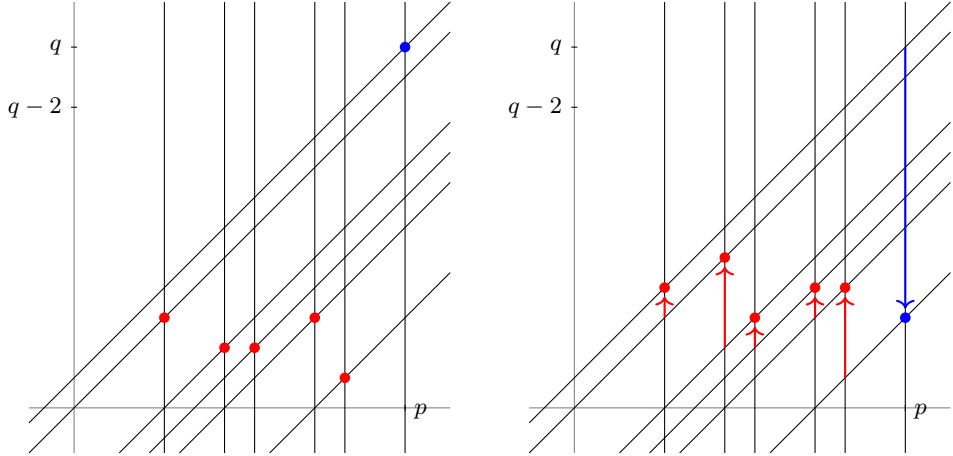
\begin{figure}[ht]
\begin{center}
\begin{tikzpicture}[scale=0.4]
\draw[gray] (-1.5,-1) -- (12.5,-1);
\draw[gray] (0,-2.5) -- (0,12.5);

\draw (11,-0.9) -- (11,-1.1) node[right] {\small $p$};
\draw (0.1,11) -- (-0.1,11) node[left] {\small $q$};
\draw (0.1,9) -- (-0.1,9) node[left] {\small $q-2$};

\draw[black] (-1.5,-1.5) -- (12.5,12.5);
\draw[black] (11,-2.5) -- (11,12.5);
\fill[blue] (11,11) circle(5pt);

\draw[black] (-1.5,-2.5) -- (12.5,11.5);
\draw[black] (3,-2.5) -- (3,12.5);
\fill[red] (3,2) circle(5pt);

\draw[black] (1.5,-2.5) -- (12.5,8.5);
\draw[black] (5,-2.5) -- (5,12.5);
\fill[red] (5,1) circle(5pt);

\draw[black] (2.5,-2.5) -- (12.5,7.5);
\draw[black] (6,-2.5) -- (6,12.5);
\fill[red] (6,1) circle(5pt);

\draw[black] (3.5,-2.5) -- (12.5,6.5);
\draw[black] (8,-2.5) -- (8,12.5);
\fill[red] (8,2) circle(5pt);

\draw[black] (6.5,-2.5) -- (12.5,3.5);
\draw[black] (9,-2.5) -- (9,12.5);
\fill[red] (9,0) circle(5pt);

\end{tikzpicture}\hfill
\begin{tikzpicture}[scale=0.4]
\draw[gray] (-1.5,-1) -- (12.5,-1);
\draw[gray] (0,-2.5) -- (0,12.5);

\draw (11,-0.9) -- (11,-1.1) node[right] {\small $p$};
\draw (0.1,11) -- (-0.1,11) node[left] {\small $q$};
\draw (0.1,9) -- (-0.1,9) node[left] {\small $q-2$};

\draw[black] (-1.5,-1.5) -- (12.5,12.5);
\draw[black] (11,-2.5) -- (11,12.5);

\draw[black] (-1.5,-2.5) -- (12.5,11.5);
\draw[black] (3,-2.5) -- (3,12.5);
\fill[red] (3,3) circle(5pt);

\draw[black] (1.5,-2.5) -- (12.5,8.5);
\draw[black] (5,-2.5) -- (5,12.5);
\fill[red] (5,4) circle(5pt);

\draw[black] (2.5,-2.5) -- (12.5,7.5);
\draw[black] (6,-2.5) -- (6,12.5);
\fill[red] (6,2) circle(5pt);

\draw[black] (3.5,-2.5) -- (12.5,6.5);
\draw[black] (8,-2.5) -- (8,12.5);
\fill[red] (8,3) circle(5pt);

\draw[black] (6.5,-2.5) -- (12.5,3.5);
\draw[black] (9,-2.5) -- (9,12.5);
\fill[red] (9,3) circle(5pt);

\fill[blue] (11,2) circle(5pt);
\color{blue}
\draw[->,thick] (11,11)--(11,2.3);
\color{red}
\draw[->,thick] (3,2)--(3,2.7);
\draw[->,thick] (5,1)--(5,3.7);
\draw[->,thick] (6,1)--(6,1.7);
\draw[->,thick] (8,2)--(8,2.7);
\draw[->,thick] (9,0)--(9,2.7);

\end{tikzpicture}
\end{center}
\caption{Shifts along the slanted grid.}\label{lattice}
\end{figure}

In general, since we have shown each $\omega_i$ shifts up, again if we consider each topological dimension from left to right, each generator has a unique position available to it on the slanted grid. These shifts of the $\omega_i$ and $\nu$ are precisely as calculated in the theorem statement.

\textbf{Identifying shifted generators.} So far we have shown the free generators ``shift'' in the case of a ramp differential, but the new generators remain a bit mysterious.  For the interested reader, we will precisely identify a choice of basis.  To do so we pick out a collection of distinguished elements in the kernel of the differential.  These are the lowest-weight elements in $\Mt^+\langle \omega_1, \dots, \omega_n \rangle$ in each of the topological degrees $\Top(\omega_1), \dots, \Top(\omega_n), \Top(\nu)$ supporting zero differentials.  We will use these to define elements in $\tilde{H}^{*,*}(Y)$.

The following chart is useful for reference as we define these elements.
\begin{center}\label{first def of ais}
\begin{tabular}{cccccc}
$\tilde{H}^{*,*}(Y)$
& $\xrightarrow{\iota^*}$
& $\tilde{H}^{*,*}(X)$
& $\xrightarrow{d}$
& $\tilde{H}^{*+1,*}(S^{p,q})$ \\
\hline \\
	$a_0$
	& $\mapsto$
	& $\tau^{k_1 +1}\omega_1$
	& $\mapsto$
	& 0 \\
		$a_i$ & $\mapsto$
		& $\rho^{j_i-j_{i+1}}\omega_i+\tau^{k_{i+1}-k_i}\omega_{i+1}$
		& $\mapsto$
		& 0 \\
			$a_n$
			& $\mapsto$
			& $\rho^{j_n +1}\omega_n$
			& $\mapsto$
			& 0 \\ \\
				$b_i$
				& $\mapsto$
				& $\chi_i$
				& $\mapsto$
				& 0 \\
\end{tabular}
\end{center}
We revisit our sample (restricted) differential in Figure \ref{ramp diff three}, now indicating the $a_i$ on the right.
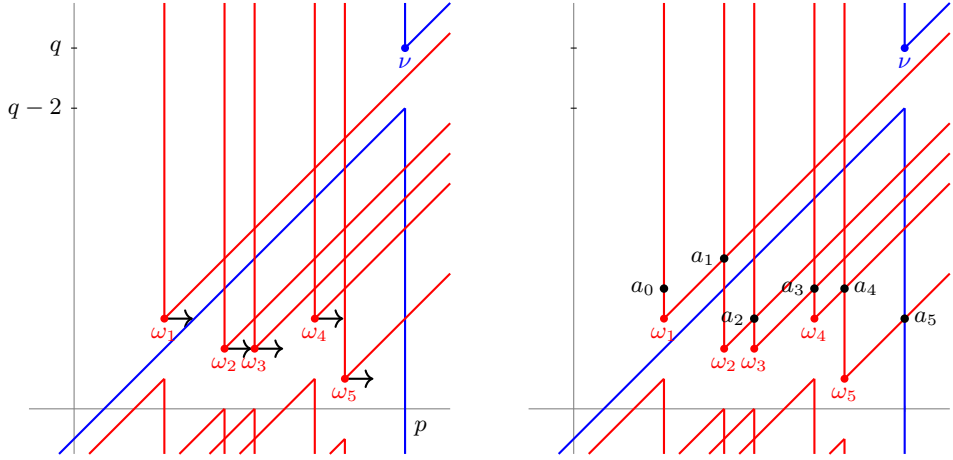
\begin{figure}[ht]
\begin{center}
\begin{tikzpicture}[scale=0.4]
\draw[gray] (-1.5,-1) -- (12.5,-1);
\draw[gray] (0,-2.5) -- (0,12.5);

\draw (11,-0.9) -- (11,-1.1) node[below right] {\small $p$};
\draw (0.1,11) -- (-0.1,11) node[left] {\small $q$};
\draw (0.1,9) -- (-0.1,9) node[left] {\small $q-2$};

\draw[->,thick] (3,2) -- (3.95,2);
\draw[->,thick] (5,1) -- (5.9,1);
\draw[->,thick] (6,1) -- (6.95,1);
\draw[->,thick] (8,2) -- (8.95,2);
\draw[->,thick] (9,0) -- (9.95,0);

\draw[thick,blue] (11,11) -- (12.5,12.5);
\draw[thick,blue] (11,11) -- (11,12.5);
\draw[thick,blue] (11,9) -- (11,-2.5);
\draw[thick,blue] (11,9) -- (-0.5,-2.5);
\draw[blue] (11,11) node[below] {\small $\nu$};
\fill[blue] (11,11) circle(3.75pt);

\draw[thick,red] (3,2) -- (12.5,11.5);
\draw[thick,red] (3,2) -- (3,12.5);
\draw[thick,red] (3,0) -- (3,-2.5);
\draw[thick,red] (3,0) -- (0.5,-2.5);
\draw[red] (3,2) node[below] {\small $\omega_1$};
\fill[red] (3,2) circle(3.75pt);

\draw[thick,red] (5,1) -- (12.5,8.5);
\draw[thick,red] (5,1) -- (5,12.5);
\draw[thick,red] (5,-1) -- (5,-2.5);
\draw[thick,red] (5,-1) -- (3.5,-2.5);
\draw[red] (5,1) node[below] {\small $\omega_2$};
\fill[red] (5,1) circle(3.75pt);

\draw[thick,red] (6,1) -- (12.5,7.5);
\draw[thick,red] (6,1) -- (6,12.5);
\draw[thick,red] (6,-1) -- (6,-2.5);
\draw[thick,red] (6,-1) -- (4.5,-2.5);
\draw[red] (6,1) node[below] {\small $\omega_3$};
\fill[red] (6,1) circle(3.75pt);

\draw[thick,red] (8,2) -- (12.5,6.5);
\draw[thick,red] (8,2) -- (8,12.5);
\draw[thick,red] (8,0) -- (8,-2.5);
\draw[thick,red] (8,0) -- (5.5,-2.5);
\draw[red] (8,2) node[below] {\small $\omega_4$};
\fill[red] (8,2) circle(3.75pt);

\draw[thick,red] (9,0) -- (12.5,3.5);
\draw[thick,red] (9,0) -- (9,12.5);
\draw[thick,red] (9,-2) -- (9,-2.5);
\draw[thick,red] (9,-2) -- (8.5,-2.5);
\draw[red] (9,0) node[below] {\small $\omega_5$};
\fill[red] (9,0) circle(3.75pt);

\end{tikzpicture}\hfill
\begin{tikzpicture}[scale=0.4]
\draw[gray] (-1.5,-1) -- (12.5,-1);
\draw[gray] (0,-2.5) -- (0,12.5);

\draw (11,-0.9) -- (11,-1.1);
\draw (0.1,11) -- (-0.1,11);
\draw (0.1,9) -- (-0.1,9);

\draw[thick,blue] (11,11) -- (12.5,12.5);
\draw[thick,blue] (11,11) -- (11,12.5);
\draw[thick,blue] (11,9) -- (11,-2.5);
\draw[thick,blue] (11,9) -- (-0.5,-2.5);
\draw[blue] (11,11) node[below] {\small $\nu$};
\fill[blue] (11,11) circle(3.75pt);

\draw[thick,red] (3,2) -- (12.5,11.5);
\draw[thick,red] (3,2) -- (3,12.5);
\draw[thick,red] (3,0) -- (3,-2.5);
\draw[thick,red] (3,0) -- (0.5,-2.5);
\draw[red] (3,2) node[below] {\small $\omega_1$};
\fill[red] (3,2) circle(3.75pt);

\draw[thick,red] (5,1) -- (12.5,8.5);
\draw[thick,red] (5,1) -- (5,12.5);
\draw[thick,red] (5,-1) -- (5,-2.5);
\draw[thick,red] (5,-1) -- (3.5,-2.5);
\draw[red] (5,1) node[below] {\small $\omega_2$};
\fill[red] (5,1) circle(3.75pt);

\draw[thick,red] (6,1) -- (12.5,7.5);
\draw[thick,red] (6,1) -- (6,12.5);
\draw[thick,red] (6,-1) -- (6,-2.5);
\draw[thick,red] (6,-1) -- (4.5,-2.5);
\draw[red] (6,1) node[below] {\small $\omega_3$};
\fill[red] (6,1) circle(3.75pt);

\draw[thick,red] (8,2) -- (12.5,6.5);
\draw[thick,red] (8,2) -- (8,12.5);
\draw[thick,red] (8,0) -- (8,-2.5);
\draw[thick,red] (8,0) -- (5.5,-2.5);
\draw[red] (8,2) node[below] {\small $\omega_4$};
\fill[red] (8,2) circle(3.75pt);

\draw[thick,red] (9,0) -- (12.5,3.5);
\draw[thick,red] (9,0) -- (9,12.5);
\draw[thick,red] (9,-2) -- (9,-2.5);
\draw[thick,red] (9,-2) -- (8.5,-2.5);
\draw[red] (9,0) node[below] {\small $\omega_5$};
\fill[red] (9,0) circle(3.75pt);

\fill (3,3) circle(4pt) node[left] {\small $a_0$};
\fill (5,4) circle(4pt) node[left] {\small $a_1$};
\fill (6,2) circle(4pt) node[left] {\small $a_2$};
\fill (8,3) circle(4pt) node[left] {\small $a_3$};
\fill (9,3) circle(4pt) node[right] {\small $a_4$};
\fill (11,2) circle(4pt) node[right] {\small $a_5$};

\end{tikzpicture}
\end{center}
\caption{Identifying shifted generators.}\label{ramp diff three}
\end{figure}

Notice $\tau^{k_1 +1}\omega_1$, as well as $\rho^{j_i-j_{i+1}}\omega_i+\tau^{k_{i+1}-k_i}\omega_{i+1}$ for $1 \leq i \leq n-1$ and $\rho^{j_n +1}\omega_n$ are all elements of $\ker(d)$.
The differential is zero on the first and last of these elements for degree reasons, while $d(\rho^{j_i-j_{i+1}}\omega_i+\tau^{k_{i+1}-k_i}\omega_{i+1})=0$ because we are working mod 2.  Since each of these elements is nonzero in $\tilde{H}^{*,*}(X)$, by exactness we can define $a_0, \dots, a_n$ to be their preimages in $\tilde{H}^{*,*}(Y)$ as in the chart above.  These $a_i$ are free generators and $N \cong \Mtb{a_0, \dots, a_n}$.

Each $a_i$ is uniquely determined because $\iota^*$ is injective in bidegree $|a_i|$.  This follows because in the long exact sequence
\[
\cdots \to \tilde{H}^{|a_i|-(1,0)}(X) \xrightarrow{d} \tilde{H}^{|a_i|}(S^{p,q}) \xrightarrow{\pi^*} \tilde{H}^{|a_i|}(Y) \xrightarrow{\iota^*} \tilde{H}^{|a_i|}(X) \to \cdots
\]
the map $d: \tilde{H}^{|a_i|-(1,0)}(X) \to \tilde{H}^{|a_i|}(S^{p,q})$ is surjective\footnote{This is clear from Figure \ref{ramp diff three} and follows from the image under $d$ of $\rho^{j_i - j_{i+1}-1}\tau \omega_i$ for any $1 \leq i \leq n-1$ and $\rho^{j_n}\tau\omega_n$ since the rank of $\tilde{H}^{*,*}(S^{p,q})$ is at most one in any bidegree.} making $\pi^*$ zero in bidegree $|a_i|$ and hence $\iota^*$ injective.

As the $a_i$ are uniquely determined and we already know $N$ has free generators in the same bidegrees as the $a_i$, each $a_i$ is indeed a free generator.  The other distinguished elements of the kernel that we can lift back to $\tilde{H}^{*,*}(Y)$ are the $\chi_i$.  Since $\tilde{H}^{*,*}(Y) \cong N \oplus \Mtb{b_{n+1}, \dots, b_{m+1}}$, the elements $a_0,\dots,a_n,b_{n+1},\dots,b_{m+1} $ give a free basis for $\tilde{H}^{*,*}(Y)$.

\section{Kronholm's proof}\label{Kronholm proof}

In this section, we discuss the subtle error in Kronholm's proof of the freeness theorem.  The main error is in not completing the inductive step.  The problem appears to arise from the similarity of spectral sequences for two different filtrations of the space.  To describe the issue, we adapt Kronholm's language and notation to match ours.

As a precursor, in Theorem 3.1 of \cite{K}, Kronholm considers attaching a single cell to a complex whose reduced cohomology is a free module with one generator.  He proves, under some assumptions, the cohomology of the newly formed complex is free.

Kronholm's main argument \cite[Theorem 3.2]{K} proceeds by induction on the representation cells. He considers a finite $\Rep(C_2)$-complex $X$ together with the `one-at-a-time' cellular filtration $X_{0} \subseteq \cdots \subseteq X_{k} \subseteq \cdots \subseteq X$, where each $X_{k+1}$ is formed from $X_{k}$ by attaching a single cell. He assumes by induction that $\tilde{H}^{*,*}(X_k)$ is free and aims to show that $\tilde{H}^{*,*}(X_{k+1})$ is free.

He applies a change of basis \cite[Lemma 3.1]{K}, much like our Lemma \ref{basis update}, to $\tilde{H}^{*,*}(X_k)$.  After the change of basis, there is a ramp of generators mapping to the bottom cone of a shifted copy of $\Mt$.  He then mistakenly implies one can reduce to the case of a differential supported by a single free summand, presumably to apply \cite[Theorem 3.1]{K}.  This is not always possible and thus he did not complete the inductive step.

It is likely the error in Kronholm's argument came from conflating two deceptively similar spectral sequences.  Kronholm's proof uses the following spectral sequence (see Proposition 3.1 in \cite{K}).

\begin{prop}\label{spec seq}
Let $X$ be a filtered $C_2$-space.  Then there is a spectral sequence with
\[
E_{1}^{p,q,n} = \tilde{H}^{p,q}\big(X_{n+1},X_{n} \big)
\]
converging to $\tilde{H}^{p,q}(X)$.
\end{prop}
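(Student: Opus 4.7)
The plan is to construct this as the standard spectral sequence of a filtered space, adapted to the $RO(C_2)$-graded Bredon cohomology theory $\tilde{H}^{*,*}(-;\underline{\F_2})$. Since this is a cohomology theory satisfying the equivariant Eilenberg--Steenrod axioms, the classical exact couple machinery carries over with only the cosmetic change of tracking the extra weight index $q$ alongside the topological degree $p$.

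The input data is already available: for each $n$, the cofiber sequence $X_n \hookrightarrow X_{n+1} \to X_{n+1}/X_n$ gives the long exact sequence recorded at the start of Section \ref{comp tools}. Setting
\[
D_1^{p,q,n} = \tilde{H}^{p,q}(X_n), \qquad E_1^{p,q,n} = \tilde{H}^{p,q}(X_{n+1},X_n) \cong \tilde{H}^{p,q}(X_{n+1}/X_n),
\]
these long exact sequences assemble into a single trigraded exact couple whose structure maps are the inclusion-induced restriction $i: D_1^{p,q,n+1} \to D_1^{p,q,n}$, the quotient map $j: D_1^{p,q,n+1} \to E_1^{p,q,n}$, and the connecting homomorphism $k: E_1^{p,q,n} \to D_1^{p+1,q,n+1}$. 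Exactness at each vertex of the couple is precisely the exactness of the long exact sequences. Iterating the derived-couple construction then produces pages $(D_r, E_r)$ with differentials $d_r = j_r \circ k_r$ of tridegree $(1,0,r)$; on the $E_1$-page, $d_1$ is the composite of the quotient and connecting maps, which agrees with the differential referenced in the footnote of Section \ref{comp tools}.

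The one point requiring real care is convergence. One needs the filtration of $\tilde{H}^{p,q}(X)$ by kernels of the restriction maps $\tilde{H}^{p,q}(X) \to \tilde{H}^{p,q}(X_n)$ to be complete Hausdorff, equivalently $\tilde{H}^{p,q}(X) \cong \varprojlim_n \tilde{H}^{p,q}(X_n)$ with vanishing $\varprojlim^1$. For finite filtrations this is automatic. In the finite type case, the bidegree-wise stabilization argument used later in the proof of the finite type freeness theorem shows the tower stabilizes and $\varprojlim^1$ vanishes, so that the Milnor exact sequence identifies the abutment with $\tilde{H}^{p,q}(X)$. For general filtrations one must either interpret convergence conditionally or impose hypotheses; the locally finite counterexample in Section \ref{main section} shows that unconditional convergence cannot be expected, so this is the only delicate part of the argument.
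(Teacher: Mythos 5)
Your construction is correct and is essentially the paper's own approach: the paper does not reprove this proposition but quotes it from Kronholm \cite{K}, and the footnote in Section \ref{comp tools} says exactly what you do, namely that the long exact sequences of the cofiber sequences $X_n \hookrightarrow X_{n+1} \to X_{n+1}/X_n$ ``sew together in the usual way,'' i.e.\ form a trigraded exact couple whose derived couples give the pages, with convergence immediate for the finite filtrations actually used in the main theorem and handled by the $\varprojlim^1$ argument in the finite type case. One small correction to your convergence discussion: Counterexample \ref{counterex} does not exhibit a failure of convergence --- for $Y = \bigvee_n S^{n,n}$ the restriction maps in the tower are surjective, so $\varprojlim^1$ vanishes and the spectral sequence does converge to $\tilde{H}^{*,*}(Y) \cong \prod_n \Sigma^{n,n}\Mt$; what fails there is freeness of the abutment as an $\Mt$-module, not identification of the abutment.
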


This is a spectral sequence of bigraded $\Mt$-modules. As Kronholm explains, in order to depict the spectral sequence in the plane, we typically use colors to represent the filtration degrees.  The differentials increase topological degree by one as before, but also reaches farther up in the filtration on each page.

For clarity, we use Proposition \ref{spec seq} to define two distinct spectral sequences converging to the cohomology of $X_{k+1}$.
\begin{itemize}
	\item Let $E$ be the spectral sequence for the `one-cell-at-a-time' filtration,
	\[
	X_{0} \subseteq X_1 \subseteq \cdots \subseteq X_{k} \subseteq X_{k+1}.
	\]
	\item Let $\mathcal{E}$ be the spectral sequence for the two-stage filtration $X_k \subseteq X_{k+1}$.
\end{itemize}
In both cases, we write $\nu$ for the free generator corresponding to the last attached cell.  The pictures one would draw of the two spectral sequences are very similar, with some subtle differences.  An example of each is shown in Figure \ref{spectral}.

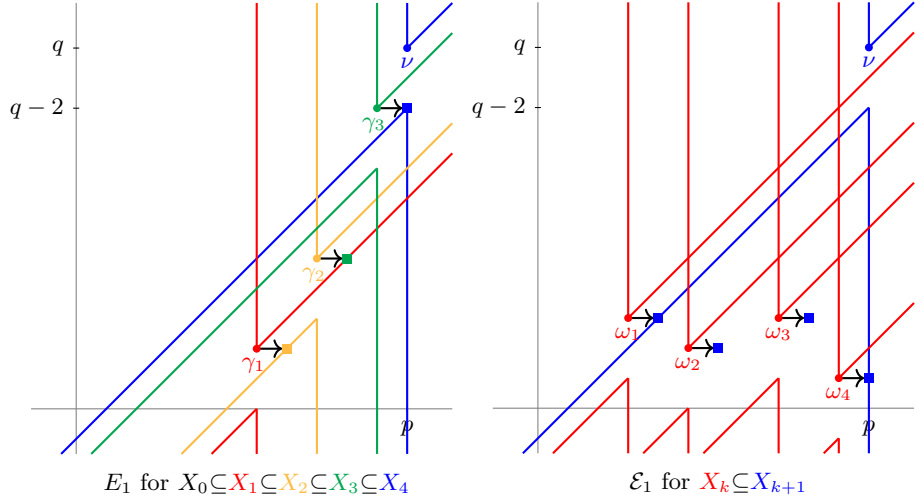
\begin{figure}[ht]
\begin{center}
\begin{tikzpicture}[scale=0.4]
\draw[gray] (-1.5,-1) -- (12.5,-1);
\draw[gray] (0,-2.5) -- (0,12.5);

\draw (11,-0.9) -- (11,-1.1) node[below] {\small $p$};
\draw (0.1,11) -- (-0.1,11) node[left] {\small $q$};
\draw (0.1,9) -- (-0.1,9) node[left] {\small $q-2$};

\draw[->,thick] (6,1) -- (6.85,1);
\draw[->,thick] (8,4) -- (8.85,4);
\draw[->,thick] (10,9) -- (10.85,9);

\draw[thick,blue] (11,11) -- (12.5,12.5);
\draw[thick,blue] (11,11) -- (11,12.5);
\draw[thick,blue] (11,9) -- (11,-2.5);
\draw[thick,blue] (11,9) -- (-0.5,-2.5);
\draw[blue] (11,11) node[below] {\small $\nu$};
\fill[blue] (11,11) circle(3.75pt);

\draw[thick,red] (6,1) -- (12.5,7.5);
\draw[thick,red] (6,1) -- (6,12.5);
\draw[thick,red] (6,-1) -- (6,-2.5);
\draw[thick,red] (6,-1) -- (4.5,-2.5);
\draw[red] (5.9,1) node[below] {\small $\gamma_1$};
\fill[red] (6,1) circle(3.75pt);

\draw[thick,Dandelion] (8,4) -- (12.5,8.5);
\draw[thick,Dandelion] (8,4) -- (8,12.5);
\draw[thick,Dandelion] (8,2) -- (8,-2.5);
\draw[thick,Dandelion] (8,2) -- (3.5,-2.5);
\draw[Dandelion] (7.9,4) node[below] {\small $\gamma_2$};
\fill[Dandelion] (8,4) circle(3.75pt);

\draw[thick,Green] (10,9) -- (12.5,11.5);
\draw[thick,Green] (10,9) -- (10,12.5);
\draw[thick,Green] (10,7) -- (10,-2.5);
\draw[thick,Green] (10,7) -- (0.5,-2.5);
\draw[Green] (9.9,9) node[below] {\small $\gamma_3$};
\fill[Green] (10,9) circle(3.75pt);

\fill[Dandelion] (6.85,0.85) rectangle ++(0.3,0.3);
\fill[Green] (8.85,3.85) rectangle ++(0.3,0.3);
\fill[blue] (10.85,8.85) rectangle ++(0.3,0.3);

\draw (6,-3.5) node {\small $E_1$ for $\color{black}{X_0} \color{black}{\subseteq} \color{red}{X_1} \color{black}{\subseteq} \color{Dandelion}{X_2} \color{black}{\subseteq} \color{Green}{X_3} \color{black}{\subseteq} \color{blue}{X_4} $};

\end{tikzpicture}\hspace{0.01cm}
\begin{tikzpicture}[scale=0.4]
\draw[gray] (-1.5,-1) -- (12.5,-1);
\draw[gray] (0,-2.5) -- (0,12.5);

\draw (11,-0.9) -- (11,-1.1) node[below] {\small $p$};
\draw (0.1,11) -- (-0.1,11) node[left] {\small $q$};
\draw (0.1,9) -- (-0.1,9) node[left] {\small $q-2$};

\draw[->,thick,black] (3,2) -- (3.85,2);
\draw[->,thick,black] (5,1) -- (5.85,1);
\draw[->,thick,black] (8,2) -- (8.85,2);
\draw[->,thick,black] (10,0) -- (10.85,0);

\fill[blue] (3.85,1.85) rectangle ++(0.3,0.3);
\fill[blue] (5.85,0.85) rectangle ++(0.3,0.3);
\fill[blue] (8.85,1.85) rectangle ++(0.3,0.3);
\fill[blue] (10.85,-0.15) rectangle ++(0.3,0.3);

\draw[thick,blue] (11,11) -- (12.5,12.5);
\draw[thick,blue] (11,11) -- (11,12.5);
\draw[thick,blue] (11,9) -- (11,-2.5);
\draw[thick,blue] (11,9) -- (-0.5,-2.5);
\draw[blue] (11,11) node[below] {\small $\nu$};
\fill[blue] (11,11) circle(3.75pt);

\draw[thick,red] (3,2) -- (12.5,11.5);
\draw[thick,red] (3,2) -- (3,12.5);
\draw[thick,red] (3,0) -- (3,-2.5);
\draw[thick,red] (3,0) -- (0.5,-2.5);
\draw[red] (3,2) node[below] {\small $\omega_1$};
\fill[red] (3,2) circle(3.75pt);

\draw[thick,red] (5,1) -- (12.5,8.5);
\draw[thick,red] (5,1) -- (5,12.5);
\draw[thick,red] (5,-1) -- (5,-2.5);
\draw[thick,red] (5,-1) -- (3.5,-2.5);
\draw[red] (5,1) node[below] {\small $\omega_2$};
\fill[red] (5,1) circle(3.75pt);

\draw[thick,red] (8,2) -- (12.5,6.5);
\draw[thick,red] (8,2) -- (8,12.5);
\draw[thick,red] (8,0) -- (8,-2.5);
\draw[thick,red] (8,0) -- (5.5,-2.5);
\draw[red] (8,2) node[below] {\small $\omega_3$};
\fill[red] (8,2) circle(3.75pt);

\draw[thick,red] (10,0) -- (12.5,2.5);
\draw[thick,red] (10,0) -- (10,12.5);
\draw[thick,red] (10,-2) -- (10,-2.5);
\draw[thick,red] (10,-2) -- (9.5,-2.5);
\draw[red] (10,0) node[below] {\small $\omega_4$};
\fill[red] (10,0) circle(3.75pt);

\draw (6,-3.5) node {\small $\mathcal{E}_1$ for $\color{red}{X_k} \color{black}{\subseteq} \color{blue}{X_{k+1}}$};

\end{tikzpicture}
\end{center}
\caption{Examples of spectral sequences for the two filtrations.}\label{spectral}
\end{figure}

On the $E_1$ page of the spectral sequence corresponding to the `one-at-a-time' cellular filtration (shown on the left in Figure \ref{spectral}), the only possible differential to $\Mtb{\nu}$ comes from one filtration degree lower.  However, each intermediate copy of $\Mt$ on the $E_1$ page may both support and receive a nonzero differential.  Such differentials give rise to  non-free ``Jack-O-Lantern'' modules on the $E_2$ page, as studied in \cite{H}.

On the other hand, the differential on the $\mathcal{E}_1$ page for the two-stage filtration is quite different.  As shown on the right in Figure \ref{spectral}, depending on their bidegrees, any copy of $\Mt$ in $\tilde{H}^{*,*}(X_k)$ may support a differential to $\Mtb{\nu}$.  The $\mathcal{E}_2 \cong \mathcal{E}_\infty$ page is typically not free.  Rather, as a consequence of the main theorem, it is an associated graded module of a filtration for a free $\Mt$-module.

Kronholm's proof uses the two-stage spectral sequence $\mathcal{E}$, where a ramp of generators supports a differential.  If the ramp had length one, then \cite[Theorem 3.1]{K} would apply.  It seems likely he was imagining iteratively applying this theorem one filtration degree at a time.  However in $\mathcal{E}$ these maps are simultaneous, so the theorem does not apply.  Switching from $\mathcal{E}$ to $E$ does not help.  On the $E_1$ page, $\Mtb{\nu}$ receives a differential from a single $\Mt$.  However, $E_1$ filtrations can both support and receive differentials.  Moreover, as noted before, $E_2$ is in general not free, so \cite[Theorem 3.1]{K} once again does not apply.

We end with an example demonstrating an $E$ in which a non-free module receives a differential after the $E_1$ page.

\begin{example} Consider the equivariant Grassmannian $\mathbb{P}(\R^{4,1}) = \text{Gr}_1(\R^{4,1})$, using the Schubert cell decompositions for $\text{Gr}_1(\R^{+++-})$ (see \cite{K}, \cite{DGrass}, or \cite{H} for more details).

This $\Rep(C_2)$-complex has cells with bidegrees $(0,0)$, $\color{red} (1,0)$, $\color{Green} (2,0)$, and $\color{blue} (3,3)$. Working with reduced cohomology, we have the one-cell-at-a-time spectral sequence $E$ shown in Figure \ref{jacky}. Note that on the second page, we have a differential from a $\Sigma^{1,0}\Mt$ to an non-free $\Mt$-module, a scenario which is not accounted for in \cite{K}.

\begin{figure}[ht]
\begin{tikzpicture}[scale=0.45]
	\draw (0,-5.5) node {\small $E_1$};
	\draw[gray] (-3.5,0) -- (4.5,0) node[below,black] {\small $p$};
	\draw[gray] (0,-4.5) -- (0,4.5) node[left,black] {\small $q$};
	\foreach \x in {-3,...,-1,1,2,...,4}
		\draw [font=\tiny, gray] (\x cm,2pt) -- (\x cm,-2pt);
	\foreach \y in {-4,...,-1,1,2,...,4}
		\draw [font=\tiny, gray] (2pt,\y cm) -- (-2pt,\y cm);

		\color{black}
		\draw[->,thick] (2,0) --node[below]{$d_1$} (3.15,0);

	\color{red}
	\draw[thick] (4.5,3.5)--(1,0) -- (1,4.5);
	\draw[thick] (-1.5,-4.5)--(1,-2) -- (1,-4.5);
	\fill (1,0) circle(3.75pt);

	\color{Green}
	\draw[thick] (2,4.5)--(2,0) --(4.5,2.5) ;
	\draw[thick] (-.5,-4.5)--(2,-2) -- (2,-4.5);
	\fill[Green] (2,0) circle(3.75pt);

	\color{blue}
	\draw[thick] (3,4.5)--(3,3) --(4.5,4.5) ;
	\draw[thick] (-2.3,-4.5)--(3.2,1) -- (3.2,-4.5);
	\fill (3,3) circle(3.75pt);

\end{tikzpicture}
\begin{tikzpicture}[scale=0.45]
	\draw (0,-5.5) node {\small $E_2$};
	\draw[gray] (-3.5,0) -- (4.5,0);
	\draw[gray] (0,-4.5) -- (0,4.5);
	\foreach \x in {-3,...,-1,1,2,...,4}
		\draw [font=\tiny, gray] (\x cm,2pt) -- (\x cm,-2pt);
	\foreach \y in {-4,...,-1,1,2,...,4}
		\draw [font=\tiny, gray] (2pt,\y cm) -- (-2pt,\y cm);

		\color{black}
		\draw[->,thick] (1,0) --node[below]{$d_2$} (1.95,0);

	\color{red}
	\draw[thick] (4.5,3.5)--(1,0) -- (1,4.5);
	\draw[thick] (-1.5,-4.5)--(1,-2) -- (1,-4.5);
	\fill (1,0) circle(3.75pt);

	\color{Green}
		\draw[thick] (2,4.5)--(2,2)--(3,3)--(3,2.2) --(4.5,3.7) ;
	\draw[thick] (-.5,-4.5)--(2,-2) -- (2,-4.5);

	\color{blue}
	\draw[thick] (3,4.5)--(3,3) --(4.5,4.5) ;
	\draw[thick] (-2.5,-4.5)--(2,0)--(2,-2)--(3,-1) -- (3,-4.5);
\end{tikzpicture}
\begin{tikzpicture}[scale=0.45]
	\draw (0,-5.5) node {\small $E_3=E_\infty$};
	\draw[gray] (-3.5,0) -- (4.5,0);
	\draw[gray] (0,-4.5) -- (0,4.5);
	\foreach \x in {-3,...,-1,1,2,...,4}
		\draw [font=\tiny, gray] (\x cm,2pt) -- (\x cm,-2pt) node[anchor=north] {$\x$};
	\foreach \y in {-4,...,-1,1,2,...,4}
		\draw [font=\tiny, gray] (2pt,\y cm) -- (-2pt,\y cm) node[anchor=east] {$\y$};

	\color{red}
	\draw[thick] (1,4.5)--(1,1)--(2,2)--(2,1) -- (4.5,3.5);
	\draw[thick] (-1.5,-4.5)--(1,-2) -- (1,-4.5);

	\color{Green}
	\draw[thick] (2,4.5)--(2,2)--(3,3)--(3,2.2) --(4.5,3.7) ;
	\draw[thick] (-.5,-4.5)--(2,-2) -- (2,-4.5);

	\color{blue}
	\draw[thick] (3,4.5)--(3,3) --(4.5,4.5) ;
	\draw[thick] (-2.5,-4.5)--(1,-1)--(1,-2)--(2,-1)--(2,-2)--(3,-1) -- (3,-4.5);
\end{tikzpicture}
\caption{$E$ for $\text{Gr}_1(\R^{+++-})$}
\label{jacky}
\end{figure}
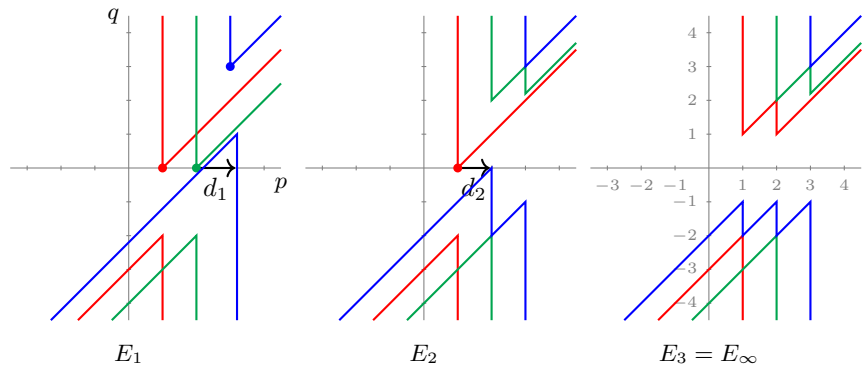

\end{example}

\newpage


\bibliographystyle{abbrv}
\bibliography{refs}
\nocite{*}

\end{document}